\newtheorem{thm}{Theorem}
\newtheorem{lemma}{Lemma}[section]
\newtheorem{prop}[lemma]{Proposition}
\newtheorem{definition}[lemma]{Definition}
\newtheorem{remark}[lemma]{Remark}
\newcommand \nc{\newcommand}
\newcommand{\ben}{\begin{eqnarray}}
\newcommand{\een}{\end{eqnarray}}
\newcommand{\beno}{\begin{eqnarray*}}
\newcommand{\eeno}{\end{eqnarray*}}
\makeatletter \@addtoreset{equation}{section} \makeatother
\nc{\ba}{\begin{array}}\nc{\ea}{\end{array}}
\nc{\be}{\begin{eqnarray}}\nc{\ee}{\end{eqnarray}}
\nc{\beq}{\begin{equation}}\nc{\eeq}{\end{equation}}
\nc{\bex}{\begin{eqnarray*}}\nc{\eex}{\end{eqnarray*}}
\nc{\btm}{\begin{theorem}} \nc{\etm}{\end{theorem}}
\nc{\blm}{\begin{lemma}} \nc{\elm}{\end{lemma}}
\nc{\va}{\varphi}
\nc{\ve}{\varepsilon}
\def\di{\mbox{div\,}}
\def\curl{\mbox{curl\,}}
\def\ds{\displaystyle}
\newcommand{\e}{\varepsilon}
 \newcommand{\n}{{\bf n}}
\newcommand{\bu}{{\bf u}}
\newcommand{\R}{\mathbb {R}}
\def\com#1{\quad{\textrm{#1}}\quad}
\def\nn{\nonumber}
\def\({\left(\begin{array}{cccccc}}
\def\){\end{array}\right)}
\def\bes{\begin{eqnarray}}
\def\ees{\end{eqnarray}}
\begin{document}
\title[Wave model for liquid-crystals]{Poiseuille flow of nematic liquid crystals via the full Ericksen-Leslie model}

\author{Geng Chen}
\address[G. Chen]{Department of Mathematics, University of Kansas, Lawrence, KS 66045, U.S.A.}
\email{\tt gengchen@ku.edu}

\author{Tao Huang}
\address[T. Huang] {Department of 
Mathematics, Wayne State University, Detroit, MI, 48202, U.S.A.,
 and NYU-ECNU Institute of Mathematical Sciences, Shanghai New York University, Shanghai, 200062, P.R.C.}
\email{\tt taohuang@wayne.edu}

\author{Weishi Liu}
\address[W. Liu]{Department of Mathematics,
University of Kansas, Lawrence, KS 66045, U.S.A.}
\email{\tt wsliu@ku.edu}
\date{\today}

\begin{abstract} 
  In this paper, we study the Cauchy problem of the Poiseuille flow of the full Ericksen-Leslie model for nematic liquid crystals.
  The model is a coupled system of a parabolic equation for the velocity 
  and a quasilinear wave equation for the director.
 For a particular choice of several physical parameter values, we construct solutions with smooth initial data and finite energy that produce, in finite time, cusp singularities -- blowups of gradients. The formation of cusp singularity is due to local interactions of wave-like characteristics of solutions, which is different from the mechanism of finite time singularity formations for the parabolic Ericksen-Leslie system. 
The finite time singularity formation for the physical model  might raise some concerns for purposes of applications. 
This  is, however,  resolved satisfactorily; more precisely, we  are able to establish 
   the global existence of   weak solutions that are  H\"older continuous and have bounded energy. 
   One major contribution of this paper is our identification of the effect of the flux density of the velocity  on the director  and the  reveal of a singularity cancellation --   the flux density remains bounded while its   two components approach infinity at formations of  cusp singularities.

\end{abstract}
\maketitle

2010 \textit{Mathematical Subject Classification:} 
35M31, 35L52, 35L67, 76D03.

\textit{Key Words:} Liquid crystal,  flux density,  global existence, cusp singularity.

\section{Introduction}
In this paper, we consider singularity formation and global existence of H\"older continuous weak solution for the Cauchy problem  
\begin{align}\label{simeqn0}
\begin{split}
 u_t=&(u_x+\theta_t)_x,\\
\theta_{tt}+2\theta_t=&c(\theta)(c(\theta)\theta_{x})_x
 -u_x.
\end{split}
\end{align}
with initial data 
\beq\label{initial}
u(x,0)=u_0(x)\in H^1(\R),\quad \theta(x,0)=\theta_0(x)\in H^1(\R),
\quad \theta_t(x,0)=\theta_1(x)\in L^2(\R).
\eeq
In addition, we assume that, for some $\alpha>0$, 
\beq\label{tecinitial}
u'_0(x)+\theta_1(x)\in L^\infty\cap C^\alpha(\R)\; \hbox{ and}\; \lim_{|x|\rightarrow\infty} (\theta_1, \theta'_0, u'_0)(x)=0.
\eeq

We also assume that the function $c(\cdot)$ is  $C^2$, and there exist positive constants $C_L$, $C_U$ and $C_1$ such that,
\beq\label{condc}
0<C_L\leq c(\cdot)\leq C_U<\infty, \quad |c'(\cdot)|\leq C_1.
\eeq

System (\ref{simeqn0}) is  the full Ericksen-Leslie model for Poiseuille flow of nematic liquid crystals with a particular choice of parameters.  The general model for  Poiseuille flow of nematics  and the choice of parameters that leads to system (\ref{simeqn0}) will be discussed in Section \ref{PoiSys}. 

For system (\ref{simeqn0}),  we will show by a class of examples that (one-sided) cusp singularity can be formed in finite time and, taking this into consideration, we are still able to  
 establish the global existence of weak solutions with a bounded energy.
Although we do not work on the model for Poiseuille flow of nematics with general parameters in this paper, we believe that the similar results hold true in general. 

We will next recall the Ericksen-Leslie model followed by a discussion of some relevant results to this work. Experts in this field can skip Section \ref{sec2.1} and jump to Section \ref{RelResult}.

\subsection{Ericksen-Leslie  model for nematic liquid crystals\label{sec2.1}}

Liquid crystals are intermediate phases between solid and isotropic fluid. Liquid crystal materials  have a degree of crystal structures but also  exhibit many hydrodynamic features  so they are capable to flow.  These multi-facet properties are very important to present applications of display and many yet to come.   Nematic liquid crystals are composed of rod-like molecules characterized by average alignment of  the long axes of neighboring molecules, which have simplest structures  among liquid crystals and have been widely studied analytically and experimentally that lead to fruitful applications (\cite{DeGP, Cha, Eri76, Les}). 
The modeling and analysis of the nematic liquid crystals have attracted  a lot of interests of mathematicians for several decades. 

 If the orientation order parameters of nematic materials are treated as a unit vector ${\n}\in \mathbb S^2$, the director, then the Oseen-Frank energy density determines the macrostructure of the crystal structure (\cite{oseen33,frank58})
\begin{align}\label{OFE}\begin{split}
2W(\n,\nabla \n)=&K_1(\di \n)^2+K_2(\n\cdot\curl \n)^2+K_3|\n\times\curl \n|^2\\
&+(K_2+K_4)[\mbox{tr}(\nabla \n)^2-(\di \n)^2 ]
\end{split}
\end{align}
where $K_j$, $j=1,2,3$, are the positive constants representing splay, twist, and bend effects respectively, with
$K_2\geq |K_4|$, $2K_1\geq K_2+K_4$. (One often takes $K_2+K_4=0$.) 
The equilibrium theory of nematics is the variational problem of the total Oseen-Frank energy over the domain $\Omega\subset \mathbb R^3$ occupied by the material. The theory  has been developed successfully and gives a wide range of interesting properties \cite{HardtK87, linlius01, linwangs14}. 
 
The hydrodynamic property of nematics is macroscopically characterized by the velocity field $\bu$. Any distortion of the director $\n$ causes the flow and, likewise,  any flow affects the alignment $\n$. These influences are determined by the 
the kinematic transport tensor $g$ and the viscous stress tensor $\sigma$ given below. Let
$$
D= \frac12(\nabla \bu+\nabla^{T} \bu),\quad \omega= \frac12(\nabla \bu-\nabla^{T}\bu),\quad N=\dot \n-\omega \n, 
$$
represent the rate of strain tensor, skew-symmetric part of the strain rate, and the
rigid rotation part of director changing rate by fluid vorticity, respectively. The kinematic transport ${\bf g}$ is given by
\begin{align}\label{g}
{\bf g}=\gamma_1 N +\gamma_2D\n 
\end{align}
which represents the effect of the macroscopic flow field on the microscopic structure. The material coefficients $\gamma_1$ and $\gamma_2$ reflect the molecular shape and the slippery part between fluid and particles. The first term of ${\bf g}$ represents the rigid rotation of molecules, while the second term stands for the stretching of molecules by the flow.
The viscous (Leslie) stress tensor $\sigma$ has the following form
\begin{align}\label{sigma}\begin{split}
\sigma=& \alpha_1 (\n^TD\n)\n\otimes \n +\alpha_2N\otimes \n+ \alpha_3 \n\otimes N\\
&   + \alpha_4D + \alpha_5(D\n)\otimes \n+\alpha_6\n\otimes (D\n),
\end{split}
\end{align}
where $\mathbf{a}\otimes \mathbf{b}=\mathbf{a}\, \mathbf{b}^T$ for column vectors $\mathbf{a}$ and $\mathbf{b}$ in $\mathbb{R}^n$. These coefficients $\alpha_j$ $(1 \leq j \leq 6)$, depending on material and temperature, are called Leslie coefficients. The following relations are assumed in the literature.
\begin{align}\label{a2g}
\gamma_1 =\alpha_3-\alpha_2,\quad \gamma_2 =\alpha_6 -\alpha_5,\quad \alpha_2+ \alpha_3 =\alpha_6-\alpha_5. 
\end{align}
The first two relations are compatibility conditions, while the third relation is called Parodi's relation, derived from Onsager reciprocal relations expressing the equality of certain relations between flows and forces in thermodynamic systems out of equilibrium (cf. \cite{Parodi70}). 
They also satisfy the following empirical relations (p.13, \cite{Les}) 
\begin{align}\label{alphas}
&\alpha_4>0,\quad 2\alpha_1+3\alpha_4+2\alpha_5+2\alpha_6>0,\quad \gamma_1=\alpha_3-\alpha_2>0,\\
&  2\alpha_4+\alpha_5+\alpha_6>0,\quad 4\gamma_1(2\alpha_4+\alpha_5+\alpha_6)>(\alpha_2+\alpha_3+\gamma_2)^2\notag.
\end{align}
Note that the 4th relation  is implied by the 3rd together with the last relation and the last can be rewritten as
$\gamma_1(2\alpha_4+\alpha_5+\alpha_6)> \gamma_2^2$.

The dynamic theory of nematics   was first proposed by Ericksen \cite{ericksen62} and Leslie \cite{leslie68}  in the 1960's.   Using the convention to denote $\dot{f}=f_t+\bu\cdot \nabla f$ the material derivative,
the full Ericksen-Leslie system is given as follows (see, e.g. \cite{Les, lin89})
\begin{equation}\label{wlce}
\begin{cases}
\rho\dot \bu+\nabla P=\nabla\cdot\sigma-\nabla\cdot\left(\frac{\partial W}{\partial\nabla \n}\otimes\nabla \n\right),
\\
\nabla\cdot \bu=0,\\ 
\nu{\ddot \n}=\lambda\n-\frac{\partial W}{\partial  \n}-{\bf g}+\nabla\cdot\left(\frac{\partial W}{\partial\nabla \n}\right), \\
|\n|=1.\\
\end{cases}
\end{equation}
In  (\ref{wlce}),  $P$ is the pressure, $\lambda$ is the Lagrangian multiplier of the constraint $|\n|=1$, $\rho$ is the density, $\nu$ is the inertial coefficient of the director $\n$, $W$ is the Oseen-Frank energy in (\ref{OFE}),  ${\bf g}$  and $\sigma$ are the kinematic transport and the viscous stress tensor, respectively, given in (\ref{g}) and (\ref{sigma}).

\subsection{Results relevant to present work.} \label{RelResult}
The full Ericksen-Leslie system \eqref{wlce}  is a coupled system of   forced Navier-Stokes equations and the wave map equations. Basic concerns about existence, uniqueness and regularity of solutions are not completely understood. 
{In general, global regular solutions are not expected; in fact, in several cases, singularity is shown to formulate in finite time for smooth initial data. Therefore, {\em singularity formation} and {\em global existence of weak solutions} are often treated in pair for dynamical models of liquid crystals from mathematical analysis viewpoint. This is the case of this work. }

\subsubsection{On the variational wave equation for director field}\label{varWave} When the fluid field $\bu$ is neglected, the Ericksen-Leslie system \eqref{wlce} is replaced by  a quasilinear wave system only on the director field $\n$. (It is known that the neglect of $\bu$ is not physically consistent since a change of $\n$ in time would drive a change of $\bu$ in time.) In one spatial dimension $x\in\mathbb R$ and for director $\n(x,t)=(\cos(\theta(x,t)), 0,\sin(\theta(x,t))$ restricted to a unit circle,  the quasilinear wave system --  the second equation in (\ref{simeqn0}) without $u_x$ and the damping $2\theta_t$ -- is often called {\em the variational wave equation} and was intensively studied in the last two decays (see for example \cites{GHZ}).  

Solutions of the variational wave equation with smooth initial data could in general produce cusp singularities due to local interactions of waves; more precisely, there may be finite time blowup in their gradients while the solutions themselves are still H\"older continuous (\cites{GHZ,CHL,CZ12}). 
On the other hand, the existence of global energy conservative solutions after the singularity formation was established in \cite{BZ}. Later this result was extended to more general initial data in \cite{HR}, the case with damping in \cite{CZ12} and the variational wave system with $\n\in{\mathbb S}^2$ in  \cite{CZZ12,ZZ10,ZZ11}. Especially, in \cite{CZ12}, the authors showed that   behaviors of large solutions of the variational wave systems with and without damping are similar.
The global well-posedness  of H\"older continuous conservative solutions was established for the variational wave system, including:  uniqueness
\cites{BCZ,CCD}, Lipschitz continuous dependence on some optimal transport metric \cite{BC2015}, and generic regularity \cites{BHY,BC}. 
The existence of the dissipative solution was studied in \cites{BH,ZZ03}.

The  singularity formation of the variational wave equation is due to local interactions of waves. This mechanism is different from that for the parabolic Ericksen-Leslie models which will be discussed in the next part \S \ref{paraEL}.

The singularity formation of the present work on system (\ref{simeqn0}) is inspired by and  directly related to those for the variational wave equations discussed above.  A major difference is the coupling term $u_x$ on $\theta$ in the second equation of system (\ref{simeqn0}). It turns out $u_x$ blows up when singularity forms, which makes it hard to track its effect on the singularity  of $\theta$ from the variational wave equation.
We are able to control the  effect of $u_x$ by controlling that of the quantity  $J(x,t):=u_x+\theta_t$, and show that the singularity formation for the coupled system (\ref{simeqn0}) has more or less the same mechanism as that for the variational wave equation.  

Note that, from the first equation of system (\ref{simeqn0}), the quantity  $J(x,t):=u_x+\theta_t$ is the flux density of the velocity $u$.
The flux density $J(x,t)$ of the velocity further plays a crucial role in establishing the existence of global weak solutions.  For the global existence result, we 
adapt  the framework in \cite{BZ} of using the semilinear system on characteristic coordinates for the variational wave equation. For our problem (\ref{simeqn0}), however, in the heat equation, the solution flow does not propagate along characteristic directions.  One has to overcome the difficulty caused by the coupling of ``mismatching"  behaviors.   A key ingredient for  extending the framework in \cite{BZ} to the coupled system (\ref{simeqn0}) is a careful  treatment  of the flux density $J(x,t)$ of the velocity. In fact $J(x,t)$ will be shown to be bounded (see Lemma \ref{lemma3.1}), although $u_x$ and $\theta_t$ both may blowup in finite time.

\subsubsection{On the parabolic Ericksen-Leslie system}\label{paraEL} When $\nu=0$, the  Ericksen-Leslie system \eqref{wlce} becomes a parabolic system (also called Ericksen-Leslie system in literature).  For the parabolic Ericksen-Leslie system in dimension two, the existence and uniqueness of global solution have been studied in \cite{WZZ13, huanglinwang14, hongxin12, LTX16, wangwang14}.   In dimension three, under some simplified assumptions, the authors of \cite{WZZ13} established global existence of solutions for small initial data and provided a characterization of the maximal existence time for general initial data.  

In \cite{lin89}, Lin proposed a simplified system, by neglecting the Leslie stress and taking $W(\n,\nabla \n)$ to be the Dirichlet energy density
\begin{equation}\label{lce}
\begin{cases}
\bu_t+\bu\cdot\nabla \bu-\Delta \bu+\nabla P
=-\nabla\cdot\left(\nabla \n \odot\nabla \n-\frac{1}{2}|\nabla \n|^2\mathbb{I}_3\right)
 \\
\nabla\cdot \bu=0  \\
\n_t+\bu\cdot\nabla \n=\Delta \n+|\nabla \n|^2\n.
\end{cases}
\end{equation}
For system (\ref{lce}) in dimension two, it was shown in \cite{linlinwang10, linwang10} that there is a unique Leray-Hopf type global weak solution. This weak solution may have at most finitely many singular times, at which $|\bu|+|\nabla \n|\rightarrow \infty$. Very recently, examples of finite time singularities of such weak solution have been constructed in \cite{llwwz19} by a new inner-outer gluing method. More precisely, given any $k$ points in the domain of dimension two, the local smooth solution blows up exactly at those $k$ points at finite time. In dimension three, existence of global weak solutions has been shown in \cite{linwang16} under the assumption $\n_0(x)\in \mathbb S^2_+$ with the help of some new compactness arguments.  In \cite{HLLW16}, for system (\ref{lce})  over a bounded domain,   two examples of finite time singularity have been constructed. {The formations of these singularities are related to some  {\em global or non-trivial topological} conditions on the initial data (over bounded domains); in particular, the mechanisms are different from that for variational wave equation discussed in the previous part \S  \ref{varWave} and our system \eqref{simeqn0} (see Section \ref{sec-results} for more details).}   It is not clear how the singularity will behave after its formation, which is presumably one of the main difficulties in establishing a global existence result. 

Although   system (\ref{lce})  misses specifics of many physical parameters, the simplification allows initial success in analyzing the general dynamical behavior of such a system that further drives a great deal treatments of the parabolic Ericksen-Leslie system.  For a more complete review, please see the survey paper \cite{linwangs14} and the references therein.

\subsubsection{On the full Ericksen-Leslie system (\ref{wlce})}
 The full Ericksen-Leslie system \eqref{wlce} itself is poorly understood. It seems to the authors that the only result available  for the global wellposedness  is in \cite{jiangluo17} where local existence and uniqueness for initial data with finite energy and global existence and uniqueness of classical solutions with small initial data were established.

%

\subsection{Main results of this work}\label{sec-results}
An   interesting and important question is the existence and behaviors of global solutions for the full Ericksen-Leslie system \eqref{wlce} with $\nu>0$.
In this paper, we give an example of  singularity formation and establish the global existence of weak solutions for the special Poiseuille flow (\ref{simeqn0}).
\smallskip

\paragraph{\underline{\bf Finite time singularity formation}}

Inspired by \cite{GHZ, CZ12,CHL}, we can construct some special smooth initial data for which the solution will produce singularity of gradient blow-up in finite time. 
To this end, we introduce a $C^1(\mathbb R)$ function $\phi$ satisfying the following properties
\beq\label{id_sing1}
\phi(0)=0\;\mbox{ and }\; \phi(a)=0\;\mbox{ for }\; a\not\in(-1,1),
\eeq
\beq\label{id_sing2}
-  \phi'(0)>\max\Big\{\frac{16C_U}{c'(\theta^*) C_L},\frac{2}{C_L}\Big\}\,,\quad |\phi'(x)|\leq C_2
\eeq
where $C_L$ and $C_U$ are defined in \eqref{condc}, $C_2$ is a positive constant and
\beq\label{id_sing3}
\int_{-1}^1(\phi')^2(a)\,da<k_0,
\eeq
for some constant $k_0$. 

\begin{thm}\label{sing}
Consider the Cauchy problem of \eqref{simeqn0}-\eqref{condc} with the following $C^1$ initial data 
\beq\label{id_sing0}
\theta_0(x)=\theta^*+ \e\, \phi(\frac{x}{\e}),\quad \theta_1(x)=\left( - c(\theta_0(x))+\e \right)\, \theta'_0(x)
\eeq
\beq\label{id_sing00}
u_0(x)=\left\{
\begin{array}{lll}\vspace{2mm}
&0,\quad& x\in (-\infty,-\ve),\\\vspace{2mm}
&\displaystyle\int_{-\ve}^x\, c(\theta_0(a))\, \theta_0'(a)\,da,\quad &x\in [-\ve,\ve]\\
&\Phi(x),\quad& x\in (\ve,\ve+2),\\
&0,\quad& x\in (\ve+2,\infty),
\end{array}
\right.
\eeq 
where $\theta^*$ is a constant satisfying
$c'(\theta^*)>0$, $\phi(x)$ is the function satisfying (\ref{id_sing1})-(\ref{id_sing3}), and $\Phi(x)$ is $C^1$ and satisfies
\beq\label{asmPhi1}
\Phi(\ve)=\int_{-\ve}^{\ve}\, c(\theta_0(a))\, \theta_0'(a)\,da, \quad \Phi'(\ve)=c(\theta_0(\ve))\theta_0'(\ve)=0, 
\eeq
\beq\label{asmPhi2}
\Phi(\ve+2)=\Phi'(\ve+2)=0,\quad |\Phi'(x)|\leq 6C_UC_2 \ve \mbox{ for any }x\in (\ve,\ve+2)
\eeq

Then,  one can choose $\e>0$ sufficiently small, 
such that the solution $(u(x,t),\theta(x,t))$ is $C^1$ only for $t<t_*$ with some $t_*<1$ and forms singularity as $t\rightarrow t_*^-$; more precisely, at some $x_*$,  
$$\theta_t(x,t)\to \infty,\quad \theta_x(x,t)\to -\infty,\quad u_x(x,t)\to -\infty$$ 
as $(x,t)\to (x_*,t_*^-)$.
\end{thm}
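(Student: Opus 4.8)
\medskip
\noindent\textbf{Proof proposal.}
The plan is to pass to the Riemann invariants of the director equation, to control the parabolic coupling through the flux density $J:=u_x+\theta_t$, and then to force a Riccati-type gradient blow-up along a single forward characteristic, in the spirit of the analysis of the variational wave equation in \cite{GHZ,CZ12,CHL}. Put $R=\theta_t+c(\theta)\theta_x$ and $S=\theta_t-c(\theta)\theta_x$, so that $\theta_t=\frac{R+S}{2}$, $c(\theta)\theta_x=\frac{R-S}{2}$ and $u_x=J-\theta_t=J-\frac{R+S}{2}$. A direct computation from the second equation of \eqref{simeqn0} gives
\begin{align*}
(\partial_t-c\partial_x)R&=\frac{c'(\theta)}{4c(\theta)}\big(R^2-S^2\big)-\frac{R+S}{2}-J,\\
(\partial_t+c\partial_x)S&=\frac{c'(\theta)}{4c(\theta)}\big(S^2-R^2\big)-\frac{R+S}{2}-J,
\end{align*}
together with $(\partial_t+c\partial_x)\theta=R$ and $(\partial_t-c\partial_x)\theta=S$; thus $R$ is transported (with a quadratic source) along backward characteristics $\dot x=-c(\theta)$ and $S$ along forward characteristics $\dot x=c(\theta)$. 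For the prescribed data one checks, using that $u_0$ is chosen on $[-\e,\e]$ precisely so that $u_0'=c(\theta_0)\theta_0'$ there, that $R(\cdot,0)=\e\,\phi'(\cdot/\e)$ and that $J(\cdot,0)$ equals $\e\,\phi'(\cdot/\e)$ on $[-\e,\e]$, $\Phi'$ on $(\e,\e+2)$ and $0$ elsewhere, so that $\|R(\cdot,0)\|_{L^\infty}+\|J(\cdot,0)\|_{L^\infty}=O(\e)$ by \eqref{id_sing2} and \eqref{asmPhi2}; that $|\theta_0-\theta^*|\le C_2\e$; that $S(0,0)=(2c(\theta^*)-\e)\,|\phi'(0)|$, which is large by \eqref{id_sing2}; and, by the scaling together with \eqref{id_sing3}, that $\mathcal E(0):=\frac12\int_{\R}\big(u_0^2+\theta_1^2+c^2(\theta_0)\theta_0'^2\big)\,dx=O(\e)$.

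\smallskip
\noindent\emph{A priori bounds.}
Testing the $\theta$-equation against $\theta_t$ and the $u$-equation against $u$, integrating over $\R$, and using the damping $2\theta_t$ together with Young's inequality yields $\frac{d}{dt}\mathcal E(t)+\frac12\int_{\R}u_x^2\,dx\le0$ for $\mathcal E(t):=\frac12\int_{\R}\big(u^2+\theta_t^2+c^2\theta_x^2\big)\,dx$, hence $\mathcal E(t)\le\mathcal E(0)=O(\e)$ and $\int_0^t\!\!\int_{\R}u_x^2=O(\e)$ for all $t$ in the maximal $C^1$ existence interval; in particular $\|\theta_t(\cdot,t)\|_{L^2}$, $\|\theta_x(\cdot,t)\|_{L^2}$, $\|R(\cdot,t)\|_{L^2}$ and $\|S(\cdot,t)\|_{L^2}$ are $O(\sqrt\e)$. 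Since $u_t=(u_x+\theta_t)_x$, the flux density $J$ solves the scalar heat equation
\[
J_t-J_{xx}+J=\partial_x\!\big(c^2\theta_x\big)-c\,c'\,\theta_x^2-\theta_t,
\]
and Duhamel's formula with the one-dimensional heat semigroup — the operators $\partial_xe^{\tau\Delta}$ and $e^{\tau\Delta}$ contributing the locally integrable factors $\tau^{-3/4}$, $\tau^{-1/2}$, $\tau^{-1/4}$ against the $L^2$, $L^1$, $L^2$ norms of the three source terms, which are $O(\sqrt\e)$, $O(\e)$, $O(\sqrt\e)$ — gives $\|J(\cdot,t)\|_{L^\infty}\lesssim\|J(\cdot,0)\|_{L^\infty}+\sqrt{\mathcal E(0)}\lesssim\sqrt\e$ for $t\le1$. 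Thus the coupling term $J$ in the $R$- and $S$-equations is an $O(\sqrt\e)$ perturbation.

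\smallskip
\noindent\emph{Blow-up and conclusion.}
Let $\gamma$ be the forward characteristic from $x=0$ and $[0,t_*)$ the maximal interval of $C^1$ existence (local existence being standard). A continuity argument on $[0,t_*)$, run in the characteristic coordinates of \cite{GHZ,CZ12,CHL} with $J$ treated as a lower-order forcing, shows that $R$ stays $O(\e)$ throughout: the only dangerous term in the backward-characteristic equation for $R$ is $-\frac{c'}{4c}S^2$, but the singularity of $S$ is concentrated near $\gamma$ in a set of $x$-width $O\big(\e(t_*-t)^2\big)$ — forced by the $O(\e)$ bound on $\|S(\cdot,t)\|_{L^2}^2$ together with the growth rate of $S$ — so this term is integrable in time and contributes only $O(\e)$. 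Consequently the increment of $\theta$ along $\gamma$ is $\int_0^tR=O(\e)$, so $|\theta-\theta^*|=O(\e)$ there, whence $c'(\theta),c(\theta)=c'(\theta^*),c(\theta^*)$ up to $o(1)$ (recall $c'(\theta^*)>0$) and $R^2\ll S^2$. Therefore, along $\gamma$,
\[
\frac{d}{dt}S\ \ge\ \big(1-o(1)\big)\frac{c'(\theta^*)}{4c(\theta^*)}\,S^2\ -\ \frac{S}{2}\ -\ C\sqrt\e ,
\]
and since $S(0,0)$ is large by \eqref{id_sing2}, the last two terms cost only a factor $1+o(1)$ in the blow-up time of the comparison Riccati equation; hence $S(\gamma(t),t)\to+\infty$ as $t\to t_*^-$ for some
\[
t_*\ \le\ \big(1+o(1)\big)\frac{4c(\theta^*)}{c'(\theta^*)\,S(0,0)}\ \le\ \big(1+o(1)\big)\frac{2}{c'(\theta^*)\,|\phi'(0)|}\ <\ \frac{C_L}{8C_U}\big(1+o(1)\big)\ <\ 1 ,
\]
the penultimate inequality being exactly \eqref{id_sing2}. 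Since $R$ stays bounded while $S\to+\infty$, we obtain $\theta_t=\frac{R+S}{2}\to+\infty$, $\theta_x=\frac{R-S}{2c}\to-\infty$ and $u_x=J-\theta_t\to-\infty$ (using $|J|\lesssim\sqrt\e$) as $(x,t)\to(x_*,t_*^-)$ with $x_*:=\gamma(t_*)$, and the solution is $C^1$ precisely on $[0,t_*)$.

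\smallskip
\noindent\emph{Main obstacle.}
The heart of the matter is the bootstrap in the last step: controlling the Riemann invariant $R$ (and the characteristic map) in a shrinking neighbourhood of the forming cusp so that $R^2\ll S^2$ and $\theta$ does not drift away from $\theta^*$, all while the parabolic coupling $u_x$ is active. This requires the full machinery of characteristic coordinates from the variational wave equation theory; the new input specific to \eqref{simeqn0} is the boundedness — here even the $O(\sqrt\e)$ smallness — of the flux density $J=u_x+\theta_t$, which keeps the parabolic coupling strictly subordinate to the hyperbolic blow-up mechanism.
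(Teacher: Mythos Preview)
Your overall architecture is exactly right and matches the paper: introduce the Riemann invariants $R,S$, show $\mathcal E(0)=O(\e)$, bound the flux density $J=u_x+\theta_t$ by $O(\sqrt\e)$ via heat-kernel estimates, then force a Riccati blow-up of $S$ along the forward characteristic $\Gamma$ through the origin. Your derivation of the PDE $J_t-J_{xx}+J=\partial_x(c^2\theta_x)-cc'\theta_x^2-\theta_t$ is a nice alternative to the paper's $v=l+k$ decomposition in Lemma~\ref{lemma3.1} and yields the same conclusion.

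The one genuine gap is your control of $R$. You try to show $R=O(\e)$ \emph{pointwise} by arguing that the dangerous term $-\tfrac{c'}{4c}S^2$ in the backward $R$-equation is supported on a set of width $O(\e(t_*-t)^2)$; but this width estimate presupposes knowledge of the characteristic geometry near the cusp, which in turn depends on $\theta$ and hence on $R$ --- the bootstrap is circular as written, and you yourself flag that closing it ``requires the full machinery of characteristic coordinates.'' The paper avoids this entirely. It never needs $R$ small pointwise; it only needs $\int_\Gamma R^2\,dt=O(\e)$, and obtains this directly by integrating the balance law
\[
(R^2+S^2)_t+\bigl(c(\theta)(S^2-R^2)\bigr)_x=-(R+S)^2-2(R+S)J
\]
over the characteristic triangle with apex on $\Gamma$ (Step~3 of Section~\ref{SF}). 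The divergence theorem gives $\int_\Gamma R^2\,dx\le k_2\e$ immediately from $\mathcal E(0)=O(\e)$ and $\|J\|_{L^\infty}=O(\sqrt\e)$, with no bootstrap. This integral bound is all that is needed: it controls the $\theta$-drift via $|\theta(\Gamma(t))-\theta^*|\le\sqrt{t}\,(\int_\Gamma R^2/c)^{1/2}$, and in the Riccati inequality for $1/\tilde S$ (with $\tilde S=e^{t/2}S$) the terms $\int_0^{t_*}\tilde S^{-2}\{\tfrac{c'}{4c}e^{t/2}R^2+\tfrac12e^{t/2}|R|\}\,dt$ are bounded by the same integral once one knows $\tilde S\ge1$. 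Replace your width heuristic with this characteristic-triangle estimate and the proof goes through cleanly.
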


We comment that the requirement on $\Phi$ in (\ref{asmPhi1}) and (\ref{asmPhi2}) is consistent; in fact one can construct a function $\Phi$ with all properties and with the factor $6$ in (\ref{asmPhi2}) being replaced by any number bigger than $4$.


Together with the energy decay for smooth solutions, we know that the singularity formed in finite time is  a cusp   (generically  one-sided-cusp) singularity, i.e. derivatives $|\theta_x|$ and $|\theta_t|$ are infinity (see \cite{GHZ}), but the $L^2$ norms of $|\theta_x|$ and $|\theta_t|$ are finite by Proposition \ref{E4smooth}, which gives H\"older continuity of $\theta$. The estimate on $J=u_x+\theta_t$ in Lemma \ref{lemma3.1} and the relation $u_x=J-\theta_t$ show that $u(\cdot,t)$ is also H\"older continuous for almost all $t$. See  Remark \ref{rem} for more details.

 As mentioned in Section \ref{RelResult}, the two examples of finite time singularity formation constructed in \cite{HLLW16, llwwz19} for the parabolic system over bounded regions are directly related to or caused by some non-trivial global/topological conditions.  While as the singularity  claimed  in Theorem \ref{sing} is formed   in essentially the  same mechanism as that in \cite{CZ12, GHZ} -- it is created {\em locally} due to  interactions of local waves that are of finite speed.    A typical point singularity of direction field $\n$ of three dimensional parabolic system is in the form of $x/|x|$, which is not continuous at singular point. In fact, if $(\mathbf{u}, \n)$ is continuous, one may show higher regularity of the solutions to parabolic system.

  Our method of showing the formation of singularity is thus based on those in papers \cite{CHL, CZ12, GHZ} for a variational wave equation while there are several new ideas provided to cope with the new model. For example, we need understand the impact of the source term $u_x$ in $\eqref{simeqn0}_2$. 
\smallskip

\paragraph{\underline{\bf Global existence of weak solutions}} Due to the formation of singularity in Theorem \ref{sing}, 
 one cannot expect existence of global classical solutions in general.  One would like to  know how the singularity behaves and whether a certain class of weak solutions   exist  beyond the time of singularity formation. This is important particularly for models of physical problems that are expected to have ``global solutions''.
 We will show that      a weak solution defined below does exist globally and has a bounded energy.
 
\begin{definition}\label{def1}   For any given time $T<\infty$,   $(u(x,t),\theta(x,t))$  is a weak solution to the initial value problem \eqref{simeqn0}-\eqref{tecinitial} for $(x,t)\in\R\times[0,T]$ if
\begin{itemize}
\item[(i)] for any $\phi\in C^\infty_0\{\R\times [0,T]\}$,
\begin{align}\label{weak1}
\iint \left(\theta_t \phi_t-(c(\theta)\phi)_x c(\theta)\theta_x -\theta_t \phi-v_t\phi\right)\,dxdt=0
\end{align}
 with
\beq\label{sec5u}
v(x,t)=\int_{-\infty}^x u(y,t) dy,\quad\hbox{and}\quad v_x(x,t)=u(x,t)
\eeq
pointwise with 
\[{  v_t(x,t)\in L^\infty\cap L^2(\R\times[0,T])}\] 
and 
\[v_t=v_{xx}+\theta_t\]
 is satisfied in $L^2(\R\times[0,T])$ sense, and
\[
u\in L^2([0,T],H^1(\R))\cap L^\infty([0,T],H^1_{loc}(\R))\cap 
L^\infty([0,T]\times\R),
\]
and
\[
u_t\in L^2([0,T],H^{-1}(\R)).
\]
\item[(ii)]   the first and second equations for initial conditions in \eqref{initial} are satisfied pointwise, and
the third equation holds in $L^p_{loc}$ for $p\in[1,2)$.
\end{itemize}
\end{definition}  
 
\begin{thm}\label{thmplc1} Assume $c(\theta)$ satisfies (\ref{condc}) and  $\theta_0$ is absolutely continuous. Then, for any time $T<\infty$, there exists a weak solution $(u(x,t),\theta(x,t))$ in the sense of Definition \ref{def1} for $(x,t)\in\R\times[0,T]$ to the initial value problem \eqref{simeqn0}-\eqref{tecinitial}. 
Furthermore,  
\begin{itemize}
\item[(i)]  the associated energy 
\beq\label{E.00}
\mathcal{E}(t): =\frac{1}{2}\int_{\R} \left(\theta_t^2+c^2(\theta)\theta_x^2+ u^2\right)\,dx
\eeq
is well-defined   for $t\in (0,T]$ and satisfies  
\[\mathcal{E}(t)\le \mathcal{E}(0)  -\iint_{\R\times [0,t]}(v_t^2+\theta^2_t)\,dxdt;\]
\item[(ii)]  $\theta(x,t)$ is locally H\"older continuous with exponent $1/2$ in both $x$ and $t$;
\item[(iii)] $u(x,t)$ is locally H\"older continuous in $x$ with exponent $1/2$ for  a.e. $t$.
\end{itemize}
\end{thm}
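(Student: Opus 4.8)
The plan is to adapt the Bressan--Zheng characteristic-coordinate construction for the variational wave equation to the coupled system \eqref{simeqn0}, the genuinely new difficulty being the interaction with the parabolic $u$-equation. First I would introduce $v(x,t)=\int_{-\infty}^x u(y,t)\,dy$, so that the $u$-equation becomes the heat equation $v_t=v_{xx}+\theta_t$ with source $\theta_t$, and the flux density $J:=u_x+\theta_t=v_t$ of the velocity satisfies a parabolic equation of its own; by Lemma \ref{lemma3.1} this quantity stays bounded on $\R\times[0,T]$ even though $u_x$ and $\theta_t$ may individually blow up. Writing $u_x=J-\theta_t$, the director equation takes the form $\theta_{tt}+\theta_t+J=c(\theta)(c(\theta)\theta_x)_x$. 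I would then pass to the Riemann-type variables $R=\theta_t+c(\theta)\theta_x$ and $S=\theta_t-c(\theta)\theta_x$; along the two families of characteristics $\dot x=\pm c(\theta)$ these satisfy ODEs whose right-hand sides are quadratic in $(R,S)$ plus bounded terms coming from the damping and from the source $-J$.

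Next I would introduce characteristic coordinates $(X,Y)$, constant along the backward and forward characteristics respectively, together with the bounded unknowns $w=2\arctan R$, $z=2\arctan S$, the energy-density variables $p,q$ (essentially $(1+R^2)x_X$ and $(1+S^2)x_Y$), and the map $x=x(X,Y)$. In these coordinates the director part of the system becomes semilinear with globally Lipschitz right-hand side, exactly as in \cite{BZ} and, with the damping term, \cite{CZ12}. The heat equation for $v$ must then be rewritten in the $(X,Y)$ variables; this is the step where the ``mismatch'' appears, since the parabolic flow does not propagate along characteristics, and I would carry $v$ (equivalently $J$) along as an additional unknown, closing the estimates using the smoothing of the heat semigroup in $(x,t)$ coordinates together with the boundedness of $J$ from Lemma \ref{lemma3.1}. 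Because the director variables remain in the compact range of $2\arctan$, $p$ and $q$ stay positive and locally bounded, and $J$ is bounded, the combined semilinear system has a priori bounds that are global in time, so starting from mollified, smooth, finite-energy initial data --- for which Proposition \ref{E4smooth} provides the energy identity and a local classical solution --- one obtains a solution of the $(X,Y)$ system on all of $[0,T]$.

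I would then invert the coordinate change: show that $t(X,Y)$ and $x(X,Y)$ are well defined and monotone in the appropriate sense, that the change of variables is invertible a.e., and that the recovered functions $(u,\theta)$ satisfy \eqref{weak1}, the identity $v_t=v_{xx}+\theta_t$ in $L^2(\R\times[0,T])$, and the regularity listed in Definition \ref{def1} (here the $L^2_tH^1_x$, $L^\infty_tH^1_{loc,x}$ and $L^\infty$ bounds for $u$, together with the $L^2_tH^{-1}_x$ bound for $u_t$, follow from the heat-equation estimates for $v$ and the energy bound, and the third equation is recovered in $L^p_{loc}$, $p\in[1,2)$, from the $L^2$-weak convergence of $\theta_x,\theta_t$ together with strong convergence of $\theta$ and of the characteristic variables). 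Finally, letting the mollification parameter tend to $0$ and using compactness in the $(X,Y)$ variables yields a weak solution on $\R\times[0,T]$; the energy inequality in (i) is obtained by passing to the limit in the smooth energy identity, the dissipation $\iint(v_t^2+\theta_t^2)$ surviving by weak lower semicontinuity (with any further loss of energy at singular times absorbed into the inequality). For (ii), since $\int_\R(\theta_t^2+c^2(\theta)\theta_x^2)\le 2\mathcal E(t)$ is bounded, Cauchy--Schwarz gives $|\theta(x_1,t)-\theta(x_2,t)|\le C\,|x_1-x_2|^{1/2}$ and the analogous estimate in $t$; for (iii), $u_x=J-\theta_t\in L^2(\R)$ for a.e.\ $t$ because $J$ is bounded, which yields the $\tfrac12$-H\"older continuity of $u(\cdot,t)$.

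The main obstacle I anticipate is precisely the coupling of the hyperbolic and parabolic parts inside the characteristic coordinates: establishing the boundedness and sufficient regularity of the flux density $J$ (Lemma \ref{lemma3.1}) and then using it to control the otherwise-unbounded source $u_x$ in the director equation, given that the wave part wants to be solved in characteristic coordinates while the heat part is naturally posed in $(x,t)$. The delicate points --- uniform bounds that survive mollification, well-posedness of the coordinate inversion, and verification of the weak formulation after the change of variables --- all hinge on handling this mismatch.
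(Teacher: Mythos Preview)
Your overall architecture --- characteristic coordinates, the variables $w,z,p,q$, the flux density $J=v_t$ as the coupling quantity --- matches the paper. But the mechanism you propose for closing the parabolic--hyperbolic coupling, and one of your a priori bounds, differ from the paper in ways that matter.

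\textbf{Fixed point versus mollification.} You plan to mollify the data, solve a combined semilinear system in $(X,Y)$ (with the heat equation ``carried along''), and pass to the limit by compactness. The paper does not mollify. Instead it runs a \emph{two-layer Schauder fixed-point}: (a) for a given external $J\in C^\alpha\cap L^\infty\cap L^2(\R\times[0,T])$, treat the director equation $\theta_{tt}+\theta_t=c(c\theta_x)_x-J$ in isolation and solve the resulting semilinear $(X,Y)$ system by Schauder (contraction fails because $J\circ(x(X,Y),t(X,Y))$ is only H\"older); (b) feed the resulting $\theta^J$ into the Duhamel formula for the heat equation in $(x,t)$, define $\mathcal M(J):=v^J_t$, and find a Schauder fixed point of $J\mapsto\mathcal M(J)$ in a tailored Banach space. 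Your mollification route runs into the difficulty that smooth solutions of the full system genuinely blow up (Theorem~\ref{sing}), so one cannot ride a classical solution and take limits; one still needs a construction past the blowup time, which is precisely the decoupling-by-fixed-point that the paper supplies. The place in your outline where you ``carry $J$ along as an additional unknown'' is exactly where this decoupling has to be made explicit, and you have not said how.

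\textbf{The bound on $p,q$.} You assert that because $w,z$ are valued in a compact set, ``$p$ and $q$ stay positive and locally bounded.'' That does not follow: the equations read $p_Y=Bpq$, $q_X=Apq$ with bounded $A,B$, but controlling $p$ requires an integral bound on $\int q\,dY$ along characteristics (and symmetrically for $q$), which is not automatic. The paper's Lemma~\ref{lemma3.3} obtains $\int p\,dX+\int q\,dY\le 2D+\bar J(T)$ via Green's theorem on a characteristic triangle, using the specific sign structure of the residual damping (relation~\eqref{damping}) together with $\|J\|_{L^2}$. This is a genuine calculation, not a consequence of compact range.

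\textbf{Regularity of $J$.} Lemma~\ref{lemma3.1} gives only an $L^\infty$ bound, and only for smooth solutions. For the fixed-point scheme the paper needs in addition $J\in C^\alpha$ with $0<\alpha<\tfrac14$ (proved from the heat-kernel representation in Appendix~\ref{secA3}) and $J\in L^2$; without the H\"older modulus, the composition $J(x(X,Y),t(X,Y))$ is not regular enough for the Schauder step in $(X,Y)$ to close. This point is absent from your outline.

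Your arguments for (ii) and (iii) at the end are essentially those of the paper.
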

 
  Note that the statement of Theorem \ref{thmplc1} involves an arbitrary but fixed time $T<\infty$. The reason is that we do not have uniqueness on weak solutions. Thus, in principle, for different $T$, one may have different weak solutions with the same initial data that make it difficult to get the conclusion for $(x,t)\in \R\times [0,\infty)$. Of course, we do not believe neither suggest the latter is the case. 

A main challenge in establishing a global existence   comes from the coupling of quasilinear wave equation $\eqref{simeqn0}_2$ and heat equation $\eqref{simeqn0}_1$. To solve the quasilinear wave equation $\eqref{simeqn0}_2$ without $2\theta_t$ and $u_x$ for general initial data, one of  few available frameworks is to use a semilinear system on some dependent variables in the energy dependent characteristic coordinates introduced in \cite{BZ}.   However, in the heat equation $\eqref{simeqn0}_1$, the solution flow does not propagate along characteristic directions, so it destroys the sharp wave front. Here   the source term $\theta_{tx}(\cdot,t)$ in $\eqref{simeqn0}_1$ has a poor regularity, only $H^{-1}$, since $\theta_t(\cdot,t)\in L^2$ for any $t$. So the solution cannot gain any regularity directly from the heat equation $\eqref{simeqn0}_1$. As mentioned in \S \ref{varWave},   a key ingredient for  extending the framework in \cite{BZ} to our coupled system is a careful  treatment  of  the flux density  $J(x,t)=u_x+\theta_t$ of the velocity.

\medskip

The remaining of the paper is organized as follows.   In Section \ref{PoiSysIdea}, we discuss the model for Poiseuille flows of nematics, specify the choice of parameters that leads to system (\ref{simeqn0}) considered in this paper, and explain main ideas for the proofs of our results.  In Section \ref{Lfun}, we give the a priori estimate on the flux density  $J(x,t)$ of the velocity for smooth solutions.  In Section \ref{SF}, we construct a singularity formation example. In Section \ref{SLS},  a semilinear system for the wave equation will be given.  In Section \ref{GER}, we prove the existence of weak solutions and the energy estimate. In Appendix A, we provide a brief derivation of (\ref{time_poi}) for the Poiseuille flows of nematics, a derivation of 
the semilinear system in the characteristic coordinates used in Section \ref{GER}, and a proof of the H\"older continuity of some functions used in  Section \ref{sec_6.2}.

\section{Poiseuille flows,  special system (\ref{simeqn0}),   ideas of our analysis}\label{PoiSysIdea} 

\subsection{System for Poiseuille flows and energy decay for smooth solutions} \label{PoiSys} 
In this paper, we are interested in Poiseuille flows of nematic liquid crystals; more precisely,   we will consider solutions of  system \eqref{wlce} of the form  (\cite{liucalderer00})
\[\bu(x,t)=(0,0,u(x,t))^T\;\mbox{ and }\;
\n(x,t)=\big(\sin\theta(x,t), 0, \cos\theta(x,t)\big)^T.\]
Then system \eqref{wlce} becomes (see Appendix \ref{Sec_A1} for a detailed derivation)
\begin{align}\label{time_poi}
\begin{split}
 \rho u_t=&a+\Big(g(\theta)u_x+h(\theta)\theta_t\Big)_x,\\
\nu\theta_{tt}+\gamma_1\theta_t=&c(\theta)(c(\theta)\theta_{x})_x
 -h(\theta)u_x,
\end{split}
\end{align}
where      the constant $a$ is the gradient of pressure along the flow direction, and 
 \begin{align}\label{fgh}\begin{split}
 g(\theta):=&\alpha_1\sin^2\theta\cos^2\theta+\frac{\alpha_5-\alpha_2}{2}\sin^2\theta+\frac{\alpha_3+\alpha_6}{2}\cos^2\theta+\frac{\alpha_4}{2},\\
 f(\theta)\,\equiv&c^2(\theta):=K_1\cos^2\theta+K_3\sin^2\theta,\\
  h(\theta):=&\alpha_3\cos^2\theta-\alpha_2\sin^2\theta=\frac{\gamma_1+\gamma_2\cos(2\theta) }{2}.
 \end{split}
 \end{align}
 The last relation comes from \eqref{a2g}. Note that  $c(\cdot)$ is smooth and satisfies \eqref{condc}. 

 For system (\ref{time_poi}),  we will take $a=0$ in the sequel.  In fact, once one finds a weak solution $(u,\theta)$ for \eqref{time_poi} without $a$, then $(\hat u=u+\frac{a}{\rho}t,\theta)$ will satisfy system \eqref{time_poi} with $a$.  For any smooth solution of the system \eqref{time_poi},  we define  the associated energy
\beq\label{E.0}
\mathcal{E}(t):=\frac{1}{2}\int_{\R} \left(\nu\theta_t^2+c^2(\theta)\theta_x^2+\rho u^2\right)\,dx.
\eeq
Let   $b(\theta)$ be the function given by  
\begin{align}\label{btheta} \begin{split}
b(\theta):=g(\theta)-\frac{1}{\gamma_1}h^2(\theta) 
=&\frac{\gamma_1(2\alpha_4+\alpha_5+\alpha_6)-\gamma_2^2}{4\gamma_1} \cos^2(2\theta)
+\frac{\alpha_4}{8}\sin^2(2\theta)\\
&+\frac{2\alpha_1+3\alpha_4+2\alpha_5+2\alpha_6}{8}\sin^2(2\theta).
\end{split}
\end{align}
Then $b(\theta)> 0$  is an immediate  consequence of (\ref{a2g}) and (\ref{alphas}). 

  \begin{prop}\label{E4smooth}
   If $(u(x,t),\theta(x,t))$ is a smooth solution  of the Poiseuille flow \eqref{time_poi} with $a=0$, then  the associated energy $\mathcal{E}(t)$ decays; more precisely,
\begin{align}\label{engineq}
\begin{split}
\frac{d}{dt}\mathcal{E}(t)= -\int_{\R}\Big(b(\theta)u_x^2+\gamma_1\Big( \theta_t+\frac{h(\theta)}{\gamma_1}u_x\Big)^2\Big)\;dx
\leq 0,
\end{split}
\end{align}
where  $b(\theta)>0 $ is given in (\ref{btheta}).
\end{prop}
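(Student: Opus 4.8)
The plan is to apply the standard energy identity: differentiate $\mathcal{E}(t)$ in (\ref{E.0}) under the integral sign, substitute the two evolution equations of (\ref{time_poi}) with $a=0$, integrate by parts in $x$ to eliminate the spatial derivatives, and reorganize the resulting bulk terms into a perfect square. Before doing so I would record that, for a smooth solution inheriting the spatial decay of the initial data in (\ref{tecinitial}), the integrands $c^2(\theta)\theta_x\theta_t$, $u\,(g(\theta)u_x+h(\theta)\theta_t)$, etc., decay as $|x|\to\infty$, so that all boundary terms produced by integration by parts vanish and $\tfrac{d}{dt}$ commutes with $\int_{\R}$. Alternatively, one establishes the identity on $[-R,R]$ and lets $R\to\infty$, using $\theta_t,\theta_x,u_x\in L^2$.

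Then I would carry out the computation in the following steps. \textbf{(1)} Direct differentiation gives $\tfrac{d}{dt}\mathcal{E}(t)=\int_{\R}\big(\nu\theta_t\theta_{tt}+c(\theta)c'(\theta)\theta_t\theta_x^2+c^2(\theta)\theta_x\theta_{xt}+\rho u u_t\big)\,dx$. \textbf{(2)} Integrating $\int_{\R}c^2(\theta)\theta_x\theta_{xt}\,dx$ by parts and using the identity $(c^2(\theta)\theta_x)_x=c(\theta)(c(\theta)\theta_x)_x+c(\theta)c'(\theta)\theta_x^2$, the $c'$-terms cancel against $\int_{\R}c(\theta)c'(\theta)\theta_t\theta_x^2\,dx$, leaving $-\int_{\R}\theta_t\,c(\theta)(c(\theta)\theta_x)_x\,dx$. \textbf{(3)} Substituting $\nu\theta_{tt}=c(\theta)(c(\theta)\theta_x)_x-h(\theta)u_x-\gamma_1\theta_t$ into the first term cancels the $c(\theta)(c(\theta)\theta_x)_x$ contributions and leaves $-\gamma_1\int_{\R}\theta_t^2\,dx-\int_{\R}h(\theta)u_x\theta_t\,dx$. \textbf{(4)} Substituting $\rho u_t=(g(\theta)u_x+h(\theta)\theta_t)_x$ into $\int_{\R}\rho u u_t\,dx$ and integrating by parts gives $-\int_{\R}g(\theta)u_x^2\,dx-\int_{\R}h(\theta)u_x\theta_t\,dx$.

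Adding the outputs of steps (3) and (4) yields $\tfrac{d}{dt}\mathcal{E}(t)=-\int_{\R}\big(g(\theta)u_x^2+2h(\theta)u_x\theta_t+\gamma_1\theta_t^2\big)\,dx$, and completing the square in $(\theta_t,u_x)$ using $\gamma_1>0$ rewrites the integrand as $\gamma_1\big(\theta_t+\tfrac{h(\theta)}{\gamma_1}u_x\big)^2+b(\theta)u_x^2$ with $b(\theta)=g(\theta)-\tfrac{1}{\gamma_1}h^2(\theta)$; since $b(\theta)>0$ (from (\ref{a2g}) and (\ref{alphas}), as noted just before the statement), this is $\le 0$, which is (\ref{engineq}). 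The computation is essentially bookkeeping; the only non-routine point is the justification of the integration by parts and differentiation under the integral sign, which is why I would settle the decay/integrability properties of the smooth solution first.
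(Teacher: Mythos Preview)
Your proof is correct and is essentially the same as the paper's: the paper multiplies the first equation by $u$ and the second by $\theta_t$, integrates by parts, sums, and completes the square, which is exactly what your steps (1)--(4) accomplish in a slightly different order. Your extra remarks on justifying the differentiation under the integral and the vanishing of boundary terms are a welcome addition that the paper leaves implicit.
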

\begin{proof} Recall, with $a=0$, the system \eqref{time_poi} becomes
\begin{equation}\label{wpfc}
\begin{cases}
\rho\ds u_t=\left(g(\theta)u_x+h(\theta)\theta_t\right)_x,\\
\nu\ds\theta_{tt}+\gamma_1\theta_t=c(\theta)\big(c(\theta)\theta_x\big)_x-h(\theta)u_x.
\end{cases}
\end{equation}
Multiplying the first equation of \eqref{wpfc} by $u$ and the second equation of \eqref{wpfc} by $\theta_t$, and integrating by parts, we have 
\beq\label{en1}
\frac{\rho}{2}\frac{d}{dt}\int u^2\,dx=-\int g(\theta)u_x^2\,dx-\int h(\theta)\theta_tu_x \,dx,
\eeq
and 
 \begin{align}\label{en2}\begin{split}
\frac{1}{2}\frac{d}{dt}\int \left(\nu\theta_t^2+c^2(\theta)\theta_x^2\right)\,dx=&-\int \gamma_1\theta_t^2\,dx
-\int h(\theta)u_x\theta_t\,dx.
\end{split}
\end{align}
Sum up \eqref{en1} and \eqref{en2} to get
\begin{align*}
\frac{1}{2}\frac{d}{dt}\int \left(\nu\theta_t^2+ c^2(\theta)\theta_x^2+\rho u^2\right)\,dx=&-\int \left(\gamma_1\theta_t^2+2h(\theta)\theta_tu_x+g(\theta)u_x^2\right) \,dx\\
=& -\int_{\R}\Big(b(\theta)u_x^2+\gamma_1\big(\theta_t+\frac{1}{\gamma_1}h(\theta)u_x\big)^2\Big)\,dx,
\end{align*}
where $b(\theta)=g(\theta)-h(\theta)/{\gamma_1}>0$ is given in (\ref{btheta}).
 This completes the proof. 
\end{proof}

The term $-h(\theta)u_x$ in the second equation of system (\ref{time_poi}) could blow up (see Theorem \ref{sing}), and hence, is hard to control directly. In view of the special structures of the system, we will introduce new state and time variables.  

 We first make the following rescaling of time variable,   
 \begin{align*}
 \tilde{u}(x,t)=u(x,\sqrt{\nu}t)\;\mbox{ and }\;  \tilde{\theta}(x,t)=\theta(x,\sqrt{\nu}t).
 \end{align*}
 Then,   system (\ref{time_poi}) becomes 
 \begin{align}\label{time_poi_u1}\begin{split}
 \frac{\rho}{\sqrt{\nu}} \tilde{u}_t=&\Big( g(\tilde{\theta}) \tilde{u}_x+\frac{1}{\sqrt{\nu}} h(\tilde{\theta})\tilde{\theta}_t\Big)_x,\\
\tilde{\theta}_{tt}+\frac{\gamma_1}{\sqrt{\nu}} \tilde{\theta}_t=&c(\tilde{\theta})(c(\tilde{\theta})\tilde{\theta}_{x})_x
 - h(\tilde{\theta}) \tilde{u}_x.
 \end{split}
 \end{align}
 We then introduce a state variable   
  \begin{align*} 
 \tilde{v}(x,t)=\int_{-\infty}^x\rho\tilde{u}(z,t)dz.
 \end{align*}
Then, $\tilde{v}_t= \sqrt{\nu}g(\tilde\theta)\tilde{u}_x+h(\tilde{\theta})\tilde{\theta}_t$.
  One has  
 \begin{align} \label{time_poi_v1}\begin{split}
\frac{\rho}{\sqrt{\nu}} \tilde{v}_t=& g(\tilde{\theta}) \tilde{v}_{xx}+\frac{\rho} {\sqrt{\nu}}h(\tilde{\theta})\tilde{\theta}_t,\\
\tilde{\theta}_{tt}+\frac{1}{\sqrt{\nu}}\Big(\gamma_1-\frac{h^2(\tilde{\theta})}{g(\tilde{\theta})}\Big)\tilde{\theta}_t=&c(\tilde{\theta})(c(\tilde{\theta})\tilde{\theta}_{x})_x
 -\frac{h(\tilde{\theta})}{\sqrt{\nu}g(\tilde{\theta})}\tilde{v}_t.
 \end{split}
 \end{align}

 We emphasize that the relation 
 \begin{align}\label{damping}
 \gamma_1-\frac{h^2(\theta)}{g(\theta)}=\gamma_1\frac{b(\theta)}{g(\theta)}>0
 \end{align}
holds where $b(\theta)>0$ is defined in \eqref{btheta} and it gives a damping in $(\ref{time_poi_v1})_2$.
 
It turns out that the term 
\[\tilde{v}_t=\sqrt{\nu}g(\tilde\theta)\tilde{u}_x+h(\tilde{\theta})\tilde{\theta}_t=:\tilde J,\]
which is the flux density of the velocity from $\eqref{time_poi_u1}_1$,
captures the interaction between $u$ and $\theta$ well -- one has a good control on the $L^{\infty}$ norm of $\tilde J$. For smooth solutions, this is proved  in Lemma \ref{lemma3.1}.  It is much more involved to control the $L^{\infty}$ norm of $\tilde J$ for weak solutions. In replacing   $\tilde{u}_x$ in $(\ref{time_poi_u1})_2$ with   $\tilde{v}_t$ in $(\ref{time_poi_v1})_2$, we need some contribution from the damping term $\gamma_1\tilde\theta_t/{\sqrt{\nu}}$ in (\ref{time_poi_u1}). The feature that some damping is kept in $(\ref{time_poi_v1})_2$ due to (\ref{damping}) is crucial in the proof of global existence of weak solutions. See Lemma \ref{lemma3.3} in Section \ref{GER}.  

\medskip


%

\subsection{A special choice of parameters leading to system (\ref{simeqn0})} \label{Spara}
As mentioned in the introduction,  we consider a special case of Poiseuille flows \eqref{time_poi} in this paper; more precisely, we take   
\[
a=0,\quad \rho=\nu=1,\quad \alpha_1=\alpha_5=\alpha_6=0,\quad \alpha_2=-1,\quad \alpha_3=\alpha_4=1, \quad \gamma_1=2. 
\]
By the Onsager-Parodi relation (\ref{a2g}), one  has
\[
\gamma_2=\alpha_6-\alpha_5=\alpha_2+\alpha_3=0,
\]
and hence,  $g(\theta)=h(\theta)=1$.
System \eqref{time_poi} is then reduced  to \eqref{simeqn0}

This special case keeps the the main structure of \eqref{time_poi} while the heat equation $\eqref{time_poi}_1$
is simplified to one with constant coefficients, i.e. $g(\theta)=h(\theta)=1$.   We do believe  that similar singularity formation and global existence results in this paper hold true for \eqref{time_poi}.

In terms of the variable $(v,\theta)$,  where 
 \begin{align}\label{u2v}
  v(x,t)=\int_{-\infty}^x u(z,t)dz, \;\mbox{ and hence, }\; v_t=u_x+\theta_t,
 \end{align}    
 system \eqref{simeqn0}  reads
\begin{align}\label{simeqn} \begin{split}
 v_t=&v_{xx}+\theta_t,\\ 
\theta_{tt}+\theta_t=&c(\theta)(c(\theta)\theta_{x})_x
-v_t.
\end{split}
\end{align}
 
   We remind the readers that the damping term $\theta_t$ in the second equation is not due to the special choice of the parameters but the intrinsic property of the problem discussed immediately after display (\ref{time_poi_v1}).

  For system \eqref{simeqn0}, the energy introduced in \eqref{E.0} simplifies to \eqref{E.00},
which,   for smooth solutions,  satisfies
\[\frac{d}{dt}\mathcal{E}(t)= -\int_{\R}\Big(\frac{1}{2} v_{xx}^2+2\Big( \theta_t+\frac{1}{2}v_{xx}\Big)^2\Big)\;dx=-\int_{\R}(v_t^2+\theta^2_t)\,dx.\]
  The latter shows our result in statement (i) of Theorem \ref{thmplc1} on energy estimates for weak solutions is sharp.




\subsection{Main ideas of proofs}\label{MI}
One of key contributions of this paper is  the identification of the crucial quantity
\[J:=v_t=u_x+\theta_t,\]
where  $v(x,t)=\int_{-\infty}^{x}u(z,t)\,dz$ introduced  in (\ref{u2v}).

For any time $T$, it will be shown  that  $J(x,t)$ is defined for $(x,t)\in\mathbb R\times [0,T]$ and has finite $L^2$, $L^\infty$ and $C^\alpha$ norms, with $\alpha\in(0, 1/4)$.  Given the fact that both $u_x$ and $\theta_t$ may blow up (Theorem \ref{sing}), the bound and regularity of $J$ are fundamentally important. It will be seen that the result is indeed critical for both singularity formation and global existence. Roughly speaking, this result holds because of the different ``scales'' of time variable $t$ in heat equation and in wave equation. To understand it, one can first look at Lemma \ref{lemma3.1} that gives a bound on $J$ associated with smooth solutions.


In our construction of the example with cusp formation, we adapt the framework in \cite{CHL, CZ12} to our coupled system.
For smooth solutions from our construction, the bound of $J$ can be carefully estimated using the initial energy using Lemma \ref{lemma3.1}. Especially, such a bound is small when $u_0$ and $\mathcal E(0)$ are both small, although $\theta_t(x,0)$ and $\theta_x(x,0)$ might be large near the point of singularity formation. In this case, the compressive effect from the quasilinear wave equation dominates the dissipative effect from the heat equation and leads to a cusp singularity in finite time.

The global existence part is much more complicated.  In the first step, for a given  bounded, square integrable and H\"older continuous function $J$, we replace $v_t$ with $J$ in equation $\eqref{simeqn}_2$ and solve for $\theta=\theta^J(x,t)$. Instead of considering the problem directly in the $(x,t)$ coordinates, we start the analysis from an equivalent semilinear system in the characteristic coordinates and, afterward, we transform back to the $(x,t)$ coordinates. This framework was used in \cite{BZ} for variational wave equation. 

Here we mention two differences from this paper to \cite{BZ}. First, due to the low regularity of $J(x,t)$, we use Schauder fixed point theorem to prove the existence of solution on characteristic coordinates.
Secondly, since there is no direct way to control two key dependent variables $p$ and $q$, which measure the dilation of the transformation,  by the semilinear system, a great deal of extra efforts are made in finding the a priori bounds on $p$ and $q$ using the relation \eqref{damping} which works even for the general case. In fact, by \eqref{damping}, we know the nematic liquid crystal model naturally gives us some ``leftover'' damping after we change from $\eqref{simeqn0}_2$ to $\eqref{simeqn}_2$, and such a ``leftover'' damping term plays a crucial role in bounding $p$ and $q$. See Lemma \ref{lemma3.3} for this key estimate.


In the second step, using the heat equation $\eqref{simeqn}_1$, with  $\theta(x,t)=\theta^J(x,t)$, we solve for $v=v^J(x,t)$. 
This allows us to define a map $J\to v^J_t$. A fixed point of this map using the Schauder Fixed-Point Theorem gives the existence. 

To show the energy decay, we need to conquer the ``mismatch'' between the semilinear system in characteristic coordinates and the heat equation.

%
 
\section{Estimates on $J=v_t$ for smooth solutions}\label{Lfun}
In this section,  we derive some estimates on $J=v_t$ for any smooth solution of \eqref{simeqn}. 
Recall from \eqref{u2v}-\eqref{simeqn} that 
\beq\label{sec3Lv}
v(x,t)=\int_{-\infty}^x
u(z,t)\,dz\;\mbox{ and }\; v_t=v_{xx}+\theta_t=u_x+\theta_t.
\eeq

\begin{lemma}\label{lemma3.1}
If $(v,\theta)$ is a smooth solution of system \eqref{simeqn} for $t\in [0,T)$, one has, for any $t\in [0,T)$,
\beq\label{vtbds}
\begin{split}
\|v_t\|_{L^\infty(\R\times(0,t))} \leq& \|u'_0(x)+\theta_1(x)\|_{L^{\infty}(\R)}\\
&+Ct^{\frac14}\left(\|\theta_t\|_{L^\infty((0,t), L^2(\R))}+\|\theta_x\|_{L^\infty((0,t), L^2(\R))}\right)\\
&+Ct^{\frac14}\left(\|\theta_x\|^2_{L^\infty((0,t), L^2(\R))}+t^{\frac14}\|u\|_{L^\infty(\R\times(0,t))}\right),
\end{split}
\eeq
and 
\beq\label{ubds}
\|u\|_{L^\infty(\R\times(0,t))}=\|v_x\|_{L^\infty(\R\times(0,t))}\leq \|u_0\|_{L^{\infty}(\R)}+Ct^{\frac14}\|\theta_t\|_{L^\infty((0,t), L^2(\R))}.
\eeq

\end{lemma}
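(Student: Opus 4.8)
The key identity is $v_t = v_{xx} + \theta_t$ coming from the heat equation $\eqref{simeqn}_1$, together with the fact that $v$ itself satisfies a heat equation with source $\theta_t$. The plan is to write $v$ (equivalently $u = v_x$ and $v_t$) via the heat kernel and Duhamel's formula, then estimate the resulting convolution integrals using the smoothing properties of the Gaussian kernel $G(x,t) = (4\pi t)^{-1/2}e^{-x^2/4t}$ and its derivatives, in particular the $L^1_x$ bounds $\|\partial_x G(\cdot,t)\|_{L^1} \sim t^{-1/2}$ and the standard interpolation between $L^1$ and $L^2$ data that produces the fractional power $t^{1/4}$ appearing in the statement.

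First I would set up the representation. Since $u_t = (u_x + \theta_t)_x = u_{xx} + \theta_{tx}$ with $u(\cdot,0) = u_0$, Duhamel gives
\[
u(x,t) = (G(\cdot,t)*u_0)(x) + \int_0^t \big(\partial_x G(\cdot,t-s) * \theta_t(\cdot,s)\big)(x)\,ds.
\]
For the first term, $\|G(\cdot,t)*u_0\|_{L^\infty} \le \|u_0\|_{L^\infty}$. For the second, $\|\partial_x G(\cdot,\tau)*\theta_t(\cdot,s)\|_{L^\infty} \le \|\partial_x G(\cdot,\tau)\|_{L^2}\|\theta_t(\cdot,s)\|_{L^2} = C\tau^{-3/4}\|\theta_t(\cdot,s)\|_{L^2}$, and $\int_0^t (t-s)^{-3/4}\,ds = 4t^{1/4}$, which yields $\eqref{ubds}$. (One must check $\|\partial_x G(\cdot,\tau)\|_{L^2} = C\tau^{-3/4}$ by scaling — this is the routine computation I would not grind through.)

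For $\eqref{vtbds}$, the cleanest route is to estimate $v_t = v_{xx} + \theta_t$ directly, or equivalently differentiate the Duhamel formula for $v$: from $v_t = v_{xx} + \theta_t$, $v(\cdot,0) = v_0$ where $v_0' = u_0$, one gets $v_t = \partial_t(G*v_0) + \theta_t + \int_0^t \partial_t\big(G(\cdot,t-s)*\theta_t(\cdot,s)\big)\,ds$; it is more convenient, however, to use the given initial data $v_t(\cdot,0) = u_0' + \theta_1$ and write $v_t$ as the solution of the heat equation it inherits. Concretely, $J = v_t$ satisfies $J_t = J_{xx} + (\theta_{tt})$; using $\eqref{simeqn}_2$ to replace $\theta_{tt} = -\theta_t + c(\theta)(c(\theta)\theta_x)_x - v_t = -\theta_t + c(\theta)(c(\theta)\theta_x)_x - J$, one obtains a heat equation for $J$ with a zeroth-order damping $-J$, initial datum $u_0' + \theta_1 \in L^\infty$, and source $-\theta_t + \big(c(\theta)^2\theta_x\big)_x - c'(\theta)c(\theta)\theta_x^2$ (expanding the quasilinear term). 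The $L^\infty$ bound on the $G*(u_0'+\theta_1)$ piece gives the first line of $\eqref{vtbds}$; the $-\theta_t$ and $-\theta_x^2$-type terms (the latter bounded in $L^1_x$ by $\|\theta_x\|_{L^2}^2$ using $c, c'$ bounded by $\eqref{condc}$) contribute the $Ct^{1/4}$ terms after pairing against $G$ or $\partial_x G$ and integrating in $s$; and the $\partial_x(c^2\theta_x)$ term, paired against $\partial_x G$, contributes $Ct^{1/4}\|\theta_x\|_{L^2}$; the $u$-term in the last line comes from handling $-J = -v_t$ on the right by absorbing or iterating, using $\eqref{ubds}$ with its $t^{1/4}\|u\|_{L^\infty}$, producing the $t^{1/2}\|u\|_{L^\infty}$ contribution written as $t^{1/4}(t^{1/4}\|u\|_{L^\infty})$.

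The main obstacle I expect is bookkeeping the quasilinear term $c(\theta)(c(\theta)\theta_x)_x$: it must be split as $\partial_x(c(\theta)^2\theta_x) - c'(\theta)c(\theta)\theta_x^2 $ — wait, more carefully $c(\theta)(c(\theta)\theta_x)_x = \partial_x\!\big(c(\theta)^2\theta_x\big) - c'(\theta)c(\theta)\theta_x\cdot\theta_x$; actually the identity is $c(c\theta_x)_x = (c^2\theta_x)_x - c_x c\theta_x = (c^2\theta_x)_x - c'(\theta)c(\theta)\theta_x^2$ — and then the divergence piece is handled by moving one derivative onto the heat kernel (costing $\tau^{-1/2}$, integrable against $ds$ to give $t^{1/2}$; to land on $t^{1/4}$ one instead interpolates, pairing $c^2\theta_x \in L^2_x$ against $\partial_x G \in L^2_x$ to get $\tau^{-3/4}$, integrable to $t^{1/4}$), while the quadratic piece $c'c\theta_x^2 \in L^1_x$ pairs against $G \in L^\infty_x$... no, against $\|G(\cdot,\tau)\|_{L^\infty_x} = C\tau^{-1/2}$, times $L^1_x$ norm $\le C_1C_U\|\theta_x\|_{L^2}^2$, and $\int_0^t \tau^{-1/2}\,ds$ does not converge to $t^{1/4}$ — so one must instead use $\|G(\cdot,\tau)*f\|_{L^\infty} \le \|G(\cdot,\tau)\|_{L^2}\|f\|_{L^2}$ with $f = $ something in $L^2$, or split $\theta_x^2 = \theta_x \cdot \theta_x$ and use $\|\theta_x\|_{L^\infty}$... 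The correct mechanism, matching the stated $t^{1/4}\|\theta_x\|_{L^2}^2$, is: write the quadratic source term's contribution as $\int_0^t G(\cdot,t-s)*\big(c'c\theta_x^2\big)\,ds$ and bound $\|G(\cdot,\tau)*g\|_{L^\infty} \le \|G(\cdot,\tau)\|_{L^2}\|g\|_{L^1}^{?}$ — the clean statement is $\|G(\cdot,\tau)\|_{L^2} = C\tau^{-1/4}$ and $\|G(\cdot,\tau)*g\|_{L^\infty} \le \|G(\cdot,\tau)\|_{L^2} \|g\|_{L^2}$, but here $g\in L^1$, not $L^2$; alternatively $\|G(\cdot,\tau)*g\|_{L^\infty}\le\|G(\cdot,\tau)\|_{L^\infty}\|g\|_{L^1} = C\tau^{-1/2}\|g\|_{L^1}$, giving $\int_0^t\tau^{-1/2} = 2t^{1/2}$, i.e. a $t^{1/2}$ not $t^{1/4}$ — so either the stated estimate absorbs $t^{1/2}\le t^{1/4}\cdot t^{1/4}\le Ct^{1/4}$ for $t$ bounded (since the lemma is local, $t < T$, constants may depend on $T$), or a finer splitting is used. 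Resolving exactly which Gaussian bound and which Hölder/interpolation exponent produces each term in $\eqref{vtbds}$ without wasting a power of $t$ — and in particular whether one needs $\theta_x \in L^2$ paired with $\partial_x G\in L^2$ versus $L^1$ data paired with $G\in L^\infty$ — is the delicate part; the rest is standard heat-kernel estimation.
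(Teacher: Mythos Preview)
Your overall strategy matches the paper's: Duhamel for $u$ and for $J=v_t$ via the heat equation, replace $\theta_{tt}$ in the source using the wave equation, split $c(c\theta_x)_x=(c^2\theta_x)_x-c'c\,\theta_x^2$, and estimate each piece by Gaussian kernel bounds. The derivation of \eqref{ubds} is exactly as you describe.

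The one place you go off track is the coupling term. You invoke $\eqref{simeqn}_2$, obtaining $\theta_{tt}=-\theta_t+c(c\theta_x)_x-J$, which puts $J$ back into its own source. That route is not wrong --- the $-J$ is a damping, so $J=e^{-t}G(\cdot,t)*J_0+\int_0^t e^{-(t-s)}G(\cdot,t-s)*\big(-\theta_t+c(c\theta_x)_x\big)\,ds$ gives a perfectly good bound --- but it produces an estimate with \emph{no} $\|u\|_{L^\infty}$ term at all. Your attempt to manufacture the $t^{1/2}\|u\|_{L^\infty}$ contribution in \eqref{vtbds} from this $-J$ by ``absorbing or iterating, using \eqref{ubds}'' does not hold together. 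The paper instead uses the equivalent form $\eqref{simeqn0}_2$: $\theta_{tt}=-2\theta_t+c(c\theta_x)_x-u_x$. The $-u_x$ is grouped with the divergence piece as $\partial_x(c^2\theta_x-u)$ and paired against $\partial_xG$; since $u\in L^\infty$ one uses $\|\partial_xG(\cdot,\tau)\|_{L^1}\sim\tau^{-1/2}$ and integrates to $t^{1/2}\|u\|_{L^\infty}$ directly. That is where the last term in \eqref{vtbds} actually comes from.

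Your hesitation over the $c'c\,\theta_x^2\in L^1_x$ piece (yielding $t^{1/2}$ via $\|G(\cdot,\tau)\|_{L^\infty}\sim\tau^{-1/2}$ rather than the stated $t^{1/4}$) is reasonable; the paper simply writes ``similarly to \eqref{key}'' here. For every application in the paper only local-in-time boundedness (and vanishing as $t\to0^+$) is used, so $t^{1/2}$ would serve just as well.
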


\begin{proof}
Recall that
\beq\label{Hdef}
H(x,t)=\frac{1}{\sqrt{4\pi t}}\, \exp\left\{-\frac{x^2}{4t}\right\}
\eeq
is the fundamental solution of 1D heat equation, that is, 
\[H_t-H_{xx}=0,\quad \mbox{for } (x,t)\in\R\times(0,\infty)\]
with  $H(x,0)=\delta_0(x)$ being the Dirac function at $x=0$.

We decompose 
\beq\label{sec5lk}
v(x,t)=l(x,t)+k(x,t)
\eeq
where $l(x,t)$ and $k(x,t)$ satisfy, respectively, 
\beq\label{inf1}
\left\{
\begin{array}{ll}
\ds l_t=l_{xx}+\theta_t,\\
\ds l(x,0)=0,
\end{array}
\right.
\eeq 
and  
\beq\label{inf2}
\left\{
\begin{array}{ll}
\ds k_t=k_{xx},\\
\ds k(x,0)=\int_{-\infty}^xu_0(y)\,dy:=k_0(x).
\end{array}
\right.
\eeq
By the Duhamel formula, one has
\[l(x,t)=\int_0^t\int_{\mathbb R}H(x-y,t-s)\theta_s(y,s)\,dyds=\int_0^t\int_{\mathbb R}H(x-y,s)\theta_s(y,t-s)\,dyds,\]
and
 \beq\label{kexp}
\begin{split}
k(x,t)&=\int_{\mathbb R}H(x-y,t)k_0(y)\,dy\\
&=k_0(x)+\int_0^t\int_{\mathbb R}H(x-y,t-s)\big(k_0\big)_{yy}(y)\,dyds\\
&=k_0(x)+\int_0^t\int_{\mathbb R}H(x-y,s)\big(k_0\big)_{yy}(y)\,dyds.
\end{split}
\eeq
Direct calculation implies
\beq\label{key}
\begin{split}
&|l_x(x,t)|\\
\leq& C\int_0^t\int_{\R}\frac{|x-y|}{(t-s)^{3/2}}\exp\left(-\frac{(x-y)^2}{4(t-s)}\right)|\theta_s(y,s)|\,dyds\\
\leq& C\left(\int_0^t\int_{\R}\frac{|x-y|^2}{(t-s)^{3/2+3/4}}\exp\left(-\frac{(x-y)^2}{2(t-s)}\right)\,dyds\right)^{\frac12}\cdot\left(\int_0^t\int_{\R}\frac{|\theta_s(y,s)|^2}{(t-s)^{3/4}}\,dyds\right)^{\frac12}\\
\leq& C\|\theta_t\|_{L^\infty((0,T), L^2(\R))}\int_0^t\frac{1}{(t-s)^{3/4}}\,ds
\leq Ct^\frac{1}{4}\|\theta_t\|_{L^\infty((0,T), L^2(\R))}.
\end{split}
\eeq
On the other hand, one can show
\beq
\begin{split}
|k_x(x,t)|
=\left|\int_{\R}H(y,t)u_0(x-y)\,dy\right|
\leq \|u_0\|_{L^{\infty}(\R)},
\end{split}
\eeq
which combining with \eqref{key} implies \eqref{ubds}.

\medskip
To obtain the estimate of $v_t$, by the definition of $k$ and $l$, we have 
\beq\notag
\begin{split}
v_t=&l_t+k_t\\
=&\frac{d}{dt}\int_0^t\int_{\mathbb R}H(x-y,s)\theta_s(y,t-s)\,dyds+\frac{d}{dt}\int_0^t\int_{\R}H(x-y,s)k_0''(y)\,dyds\\
=&\int_{\mathbb R}H(x-y,t)\left(\theta_1(y)+u'_0(y)\right)\,dy+\int_0^t\int_{\mathbb R}H(x-y,s)\theta_{st}(y,t-s)\,dyds\\
=&\int_{\mathbb R}H(x-y,t)\left(\theta_1(y)+u'_0(y)\right)\,dy-\int_0^t\int_{\mathbb R}H(x-y,s)\theta_{ss}(y,t-s)\,dyds.
\end{split}
\eeq
The first term can be estimated as follows
\beq\label{estkt}
\begin{split}
\left|\int_{\mathbb R}H(x-y,t)\left(\theta_1(y)+u'_0(y)\right)\,dy\right|
\leq \|u'_0(x)+\theta_1(x)\|_{L^{\infty}(\R))}.
\end{split}
\eeq
By $\eqref{simeqn}_2$, we can rewrite the second term as
\beq\label{estlt_last}
\begin{split}
\int_0^t\int_{\mathbb R}&H(x-y,s)\theta_{ss}(y,t-s)\,dyds\\
=&\int_0^t\int_{\mathbb R}H(x-y,s)\big[-2\theta_s+c(\theta)(c(\theta)\theta_y)_y-u_y\big](y,t-s)\,dyds\\
=&\int_0^t\int_{\mathbb R}H(x-y,s)\big[-2\theta_s-c'(\theta)c(\theta)\theta_y^2\big](y,t-s)\,dyds\\
&+\int_0^t\int_{\mathbb R}H_y(x-y,s)\big[c^2(\theta)\theta_y-u\big](y,t-s)\,dyds.
\end{split}
\eeq
Similarly to \eqref{key}, one can show that
\beq\label{estlt2}
\begin{split}
&\left|\int_0^t\int_{\mathbb R}H(x-y,s)\theta_{ss}(y,t-s)\,dyds\right|\\
&\quad \leq Ct^{\frac14}\left(\|\theta_t\|_{L^\infty((0,t), L^2(\R))}+\|\theta_x\|_{L^\infty((0,t), L^2(\R))}\right)\\
 &\qquad+Ct^{\frac14}\left(\|\theta_x\|^2_{L^\infty((0,t), L^2(\R))}+t^{\frac14}\|u\|_{L^\infty(\R\times(0,t))}\right).
\end{split}
\eeq

Note in \eqref{estlt_last}, we use the different ``scales'' of time variable $t$ in heat equation and in wave equation. Heuristically, we use the wave equation to change $H*\theta_{tt}$ to $H*\theta_{xx}$ and other lower order terms. This helps us bound the term in \eqref{estlt_last} that leads to the bound of $J$.

Combining \eqref{estkt} and  \eqref{estlt2}, one has  \eqref{vtbds}.
\end{proof}

By the energy decay proved  in  Proposition \ref{E4smooth}
\[
\mathcal{E}(t)\leq \mathcal{E}_0\quad \hbox{for any}\quad 0\leq t\leq T,
\]
and also using Lemma \ref{lemma3.1},
we know that there exists a constant $\hat{J}(T)$  depending only on $\|u_0'\|_{L^2(\R)}$, $\mathcal{E}_0$ and $T$, such that,
\beq\label{L00}
\max_{(x,t)\in \mathbb{R}\times [0,T]}{|J(x,t)|}<\hat{J}(T).
\eeq

From the proof of Lemma \ref{lemma3.1},  
\beq\label{hatu}
\begin{split}
 u(x,t) =&\int_{\R}H(x-y,t)u_0(y)\,dy+\int_0^t\int_{\R}H_x(x-y,t-s)\theta_{s}(y,s)\,dyds.
\end{split}
\eeq
One then has
\beq\label{hatvt}
\begin{split}
J(x,t)
=&\int_{\R}H(x-y,t)\left(u_0'(y)+\theta_1(y)\right)\,dy\\
&+\int_0^t\int_{\mathbb R}H(x-y,t-s)\big[2\theta_s+c'(\theta)c(\theta)\theta_y^2\big](y,s)\,dyds\\
&-\int_0^t\int_{\mathbb R}H_y(x-y,t-s)\big[c^2(\theta)\theta_y-u\big](y,s)\,dyds.
\end{split}
\eeq
These two relations will be used in Section \ref{sec_6.2} where we prove the global existence of weak solutions. In fact, the main step in the proof of existence is to find a fixed point of a map $\mathcal M (J)$, constructed by using \eqref{hatu} and \eqref{hatvt}, for $J$ in $L^\infty\cap L^2$. By \eqref{L00} and \eqref{sec3Lv}, one can easily see why we use the sup-norm space and square integrable function space, respectively.
Secondly, the estimate on $v_t=J$ in Lemma \ref{lemma3.1} and the energy decay will give us the key estimate on $J$ for the singularity formation  in the next section.




\section{Singularity formation}\label{SF}
\setcounter{equation}{0}

This section is devoted to a proof of  Theorem \ref{sing} for the singularity formation.
We extend the   framework for the variational wave equation in  \cite{GHZ,CZ12} to the coupled system \eqref{simeqn0}. The main difference is that there is a nonlocal source term $J$ in the second equation of \eqref{simeqn0}. The estimate on $J$ in Lemma \ref{lemma3.1} is thus the major new ingredient for our construction. 

 The proof of Theorem \ref{sing} is split into several steps.     We will show that, if $\ve$ is small enough, then the singularity will appear before $t=1$. 
Thus all estimates below are for solutions over $0\leq t<1$.

\medskip
\noindent{\em Step 1.} For any smooth solution $(u(x,t),\theta(x,t))$ of \eqref{simeqn0},   set  
\begin{align}\label{S_R_def}
 S := \theta_t - c(\theta) \theta_x,\quad
 R := \theta_t+c(\theta) \theta_x.
 \end{align}
It follows from $\eqref{simeqn0}_2$ that
 \begin{align}\label{RxSx}\begin{split}
 S_t+c(\theta)S_x =& \frac{c'(\theta)}{4c(\theta)} (S^2-R^2)-(R+S)-u_x,\\
 R_t -c(\theta) R_x =& \frac{c'(\theta)}{4c(\theta)} (R^2-S^2)-(R+S)-u_x,
\end{split}
\end{align}
or, with $J=v_t=u_x+\theta_t$ as in the previous section,
\begin{align}
S_t+c(\theta)S_x =& \frac{c'(\theta)}{4c(\theta)} (S^2-R^2)-\frac{1}{2}(R+S)-J,\label{S_sing}\\
 R_t -c(\theta) R_x =& \frac{c'(\theta)}{4c(\theta)} (R^2-S^2)-\frac{1}{2}(R+S)-J.
 \label{R_sing}
\end{align}
It is then easy to have
\beq\label{RS_sing}
\bigl(R^2+S^2\bigr)_t+\bigl(c(\theta)(S^2-R^2)\bigr)_x=-(S+R)^2-2(S+R)J.
\eeq

\medskip
\noindent{\em Step 2.}
From the initial condition \eqref{id_sing0} set for Theorem \ref{sing}, one has 
\begin{align}\label{R_0_sing}
\theta_x(x,0)=\phi'(\frac{x}{\e}),\;
R(x,0)=\e\, \phi'(\frac{x}{\e}),\;
S(x,0)=\big(-2c(\theta(x,0))+\e \big)\, \phi'(\frac{x}{\e}). 
\end{align}
We always choose $0<\e<C_L$ and $\e\ll 1$. Here recall from \eqref{condc}  that the uniform lower and upper bounds of the function $c$ are $C_L$ and $C_U$, respectively.
 So by \eqref{id_sing2},
\beq\label{S00}
S(0,0)=(-2c(\theta^*)+\ve)\phi'(0)>\max\left\{\frac{16C_U}{c'(\theta^*)},2\right\}.
\eeq

We now define a function $E$ as
\beq\label{E_def}
E(t)\equiv E(\theta(\cdot, t))=\int_{-\infty}^{\infty}(\theta^2_t+c^2\theta^2_x)(x, t)\,dx
=\frac12\int_{-\infty}^{\infty} (R^2+S^2)(x, t)\, dx.
\eeq
For smooth solutions, by the energy decay proved in Proposition \ref{E4smooth}, and properties in \eqref{id_sing3}-\eqref{asmPhi2}, we have 
\begin{align}\label{E_est}\begin{split}
E(t) \leq& 2\mathcal{E}(t)\leq2\mathcal{E}(0)=   \frac12\int_{-\infty}^{\infty} (R^2+S^2+2u^2)(x, 0)\, dx\\
	 =&   \frac12\int_{-\infty}^{\infty} \big[\,\left( - 2\,c(\theta(x,0))+\e \right)^2+\e^2\,\big]\, 
	\big(\phi'(\frac{x}{\e}) \big)^2 \, dx +KC_U^2C_2^2\ve^2 \leq  O(\e),
\end{split}
\end{align}
where $K$ is some constant,
and $\ve$ is any sufficiently small constant.
It then follows from Lemma \ref{lemma3.1}, \eqref{id_sing0}-\eqref{asmPhi2} and \eqref{E_est} that there exists a constant $k_1$ independent of $\ve$ such that, 
\beq\label{J_est} 
\| J\|_{L^\infty(\mathbb R\times(0,t))}\leq k_1\sqrt\e\quad \hbox{for}\quad 0\leq t\leq 1.
\eeq



\begin{figure}[htb]
\centering
\includegraphics[scale=.4]{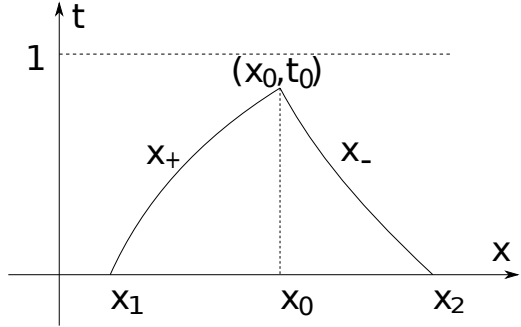}
\caption{Characteristic triangle $\Omega$}
\label{f0}
\end{figure}

\medskip
\noindent{\em Step 3.}
Next we consider any characteristic triangle  $\Omega$  in Figure \ref{f0} 
bounded by the $x$-axis together with the characteristic curves $x_{\pm}(t)$ (or $t_{\pm}(x)$) given by 
\[\frac{dx_{\pm}(t)}{dt}=\pm c(\theta)\leq C_U,\;\mbox{ or equivalently, }\; 
\frac{dt_{\pm}(x)}{dx}=\pm \frac{1}{c(\theta)}.
\]
It is easy to see that, for any $(x_0,t_0)$ with $t_0<1$, 
\beq\label{xpm}
|x_2-x_1|<2C_U.
\eeq
Integrating \eqref{RS_sing} over  $\Omega$ and applying the divergence theorem, we have,
\begin{align*}
\int_{x_1}^{x_0}R^2(x, t_{+}(x))\,dx+&\int_{x_0}^{x_2}S^2(x,t_{-}(x))\,dx = 
\frac12 \int_{x_1}^{x_2} \big(R^2(x,0)+S^2(x,0)\big)\,dx\\
  +&\frac{1}{2}\iint_{\Omega}(S+R)^2\,dx\,dt+\iint_{\Omega}(S+R)J\,dx\,dt.
 \end{align*}
 Using \eqref{E_est}, \eqref{J_est} and \eqref{xpm}, one has, for $\ve$ small,
\begin{align}\label{dependence}\begin{split}
&\int_{x_1}^{x_0}R^2(x, t_{+}(x))\,dx+\int_{x_0}^{x_2}S^2(x,t_{-}(x))\,dx\\
 &\qquad\leq 
\frac12 \int_{x_1}^{x_2} \left[R^2(x,0)+S^2(x,0)\right]\,dx+\int_0^{t_0}\int_{x_-(t)}^{x_+(t)}(S^2+R^2)\,dx\,dt\\
&\qquad\qquad +\iint_{\Omega}(|S|+|R|)|J|\,dx\,dt\\
&\qquad <12C_U^2k_0\ve+k_1\sqrt\ve \int_{0}^t\int_{x_-(t)}^{x_+(t)}(|S|+|R|)\,dx\,dt\\
&\qquad <12C_U^2k_0\ve+k_1\sqrt{\ve}(2C_U)^\frac{1}{2}(4C^2_U k_0\ve)^\frac{1}{2} =k_2\ve
\end{split}
\end{align}
where $k_2= 12 k_0C_U^2+2\sqrt{2 k_0} k_1C_U^{\frac{3}{2}}.$

\medskip
\noindent{\em Step 4.}
Consider now the forward characteristic piece $x=\Gamma(t)$ for $t\in [0,1]$ starting from the origin,   that is, 
\[
\frac{d\Gamma(t)}{dt}=c\Bigl(\theta\bigl(\Gamma(t),t\bigr)\Bigr),\quad \Gamma(0)=0.
\]
We will show the singularity formation by tracking   $S(t)\equiv S(\Gamma(t),t)$ along $\Gamma$. 

We know that
\[
\frac{d \theta(\Gamma(t),t)}{dt}=R(\Gamma(t),t).
\]
Integrate this equation and use \eqref{dependence} to have
\begin{align}\begin{split}
&|\theta(\Gamma(t),t)-\theta(\Gamma(0),0)|=|\int_{0}^t R(\Gamma(t),t)\, dt|\nn\leq  \sqrt{t}\sqrt{\int_{0}^t R^2(\Gamma(t),t)\, dt}\nn\\
&\qquad \leq  \sqrt{\int_{x_0}^{x} R^2(x,t(x))\frac{1}{c}\, dx}\nn\leq  \sqrt{\frac{1}{C_L}\int_{x_0}^{x} R^2(x,t(x))\, dx}\leq  \sqrt{\frac{k_2}{C_L}\ve},
\end{split}
\end{align}
 where without confusion we use $t(x)$ to denote the characteristic $\Gamma(t)$. Recall that $c$ is $C^2$. Thus, if $\e$ is small enough, one has 
\beq\label{cd_sign}
c'(\theta(\Gamma(t),t))>\frac{c'(\theta(\Gamma(0),0))}2=\frac{c'(\theta^*)}2>0.
\eeq

We claim that before time $t=1$, if there is no break down of classical solution, then 
$S(\Gamma(t),t)>1$. This will be proved   by contradiction. Suppose that $t_*$ is the first time such that 
\beq\label{key_4}S(\Gamma(t_*),t_*)=1,\eeq 
while
\beq\label{key_5}
S(\Gamma(t),t))>1 \quad\hbox{for}\quad 0<t<t_*\leq 1. 
\eeq
Now we only consider $\Gamma(t)$ with $ 0<t<t_*\leq 1$.
Set $\tilde S=e^{\frac{1}{2}t} S$.
By \eqref{S_sing}, we have
\[
{\tilde S}_t+c {\tilde S}_x
= \frac{c'}{4c}e^{-\frac{1}{2}t}  {\tilde S}^2 -\frac{c'}{4c}e^{\frac{1}{2}t}R^2-\frac{1}{2}e^{\frac{1}{2}t} R - e^{\frac{1}{2}t} J.
\]
Along the characteristic $\Gamma(t)$, we have
\[
\frac{d}{dt}\tilde S
\geq \frac{c'}{4c}e^{-\frac{1}{2}t}  {\tilde S}^2 -\frac{c'}{4c}e^{\frac{1}{2}t}R^2-\frac{1}{2}e^{\frac{1}{2}t}|R|-e^{\frac{1}{2}t}|J|.
\]
Divide the above by $\tilde S^2$ and  integrate to get 
\begin{align}\label{7.18}\begin{split}
\frac{1}{\tilde S(t_*)}&\leq \frac{1}{\tilde S(0)}-\frac{c'(\theta^*)}{8C_U}t_*+\int_0^{t_*}\frac{1}{\tilde S^2}\left\{\frac{c'}{4c}e^{\frac{1}{2}t}R^2+\frac{1}{2}e^{\frac{1}{2}t}|R|+e^{\frac{1}{2}t}|J|\right\}dt\\
&\leq \min\{ \frac{c'(\theta^*)}{16C_U},\frac{1}{2}\} -\frac{c'(\theta^*)}{8C_U} t_*+k_3\sqrt{\ve},
\end{split}
\end{align}
for some positive constant $k_3$ independent of $\ve$ because of \eqref{dependence},  \eqref{key_5} and \eqref{J_est}, where we also use \eqref{cd_sign} and \eqref{S00}. If $\ve$ is small enough, one has  
$\tilde{S}(t_*)>e^{\frac{1}{2}}$,   and hence,    $S(t_*)>1$, which contradicts to \eqref{key_4}.

Hence, if there is no blowup before $t=1$, then  $S(\Gamma(t),t))>1$ for $0<t<1$.

\medskip
\noindent{\em Step 5.}
Finally, we prove the breakdown of the solution. By the same calculation as in \eqref{7.18},
\[
\frac{1}{\tilde S(t)}\leq  \frac{c'(\theta^*)}{16C_U} -\frac{c'(\theta^*)}{8C_U} t+k_3\sqrt{\ve}.
\]
Therefore  $\tilde S(t)\rightarrow \infty$ will occur no later than the time when the right-hand side is zero or
\[
t=\frac{1}{2}+\frac{8C_Uk_3} {c'(\theta^*)}\sqrt{\ve}<1,
\]
where the inequality holds when $\ve$ is   small enough. This completes the proof of Theorem \ref{sing}. 

\begin{figure}[htb]
\centering
\includegraphics[scale=.35]{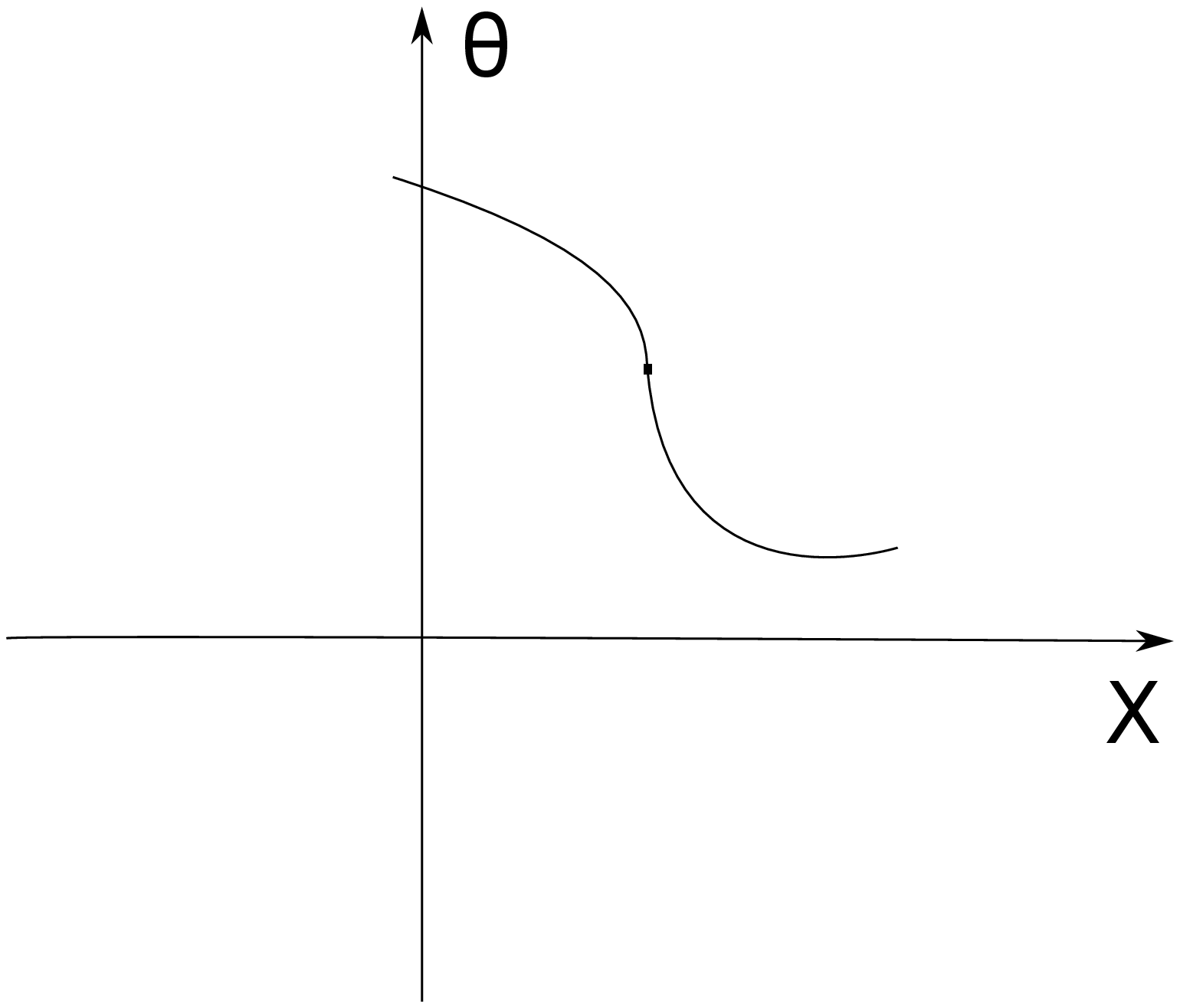}
\includegraphics[scale=.35]{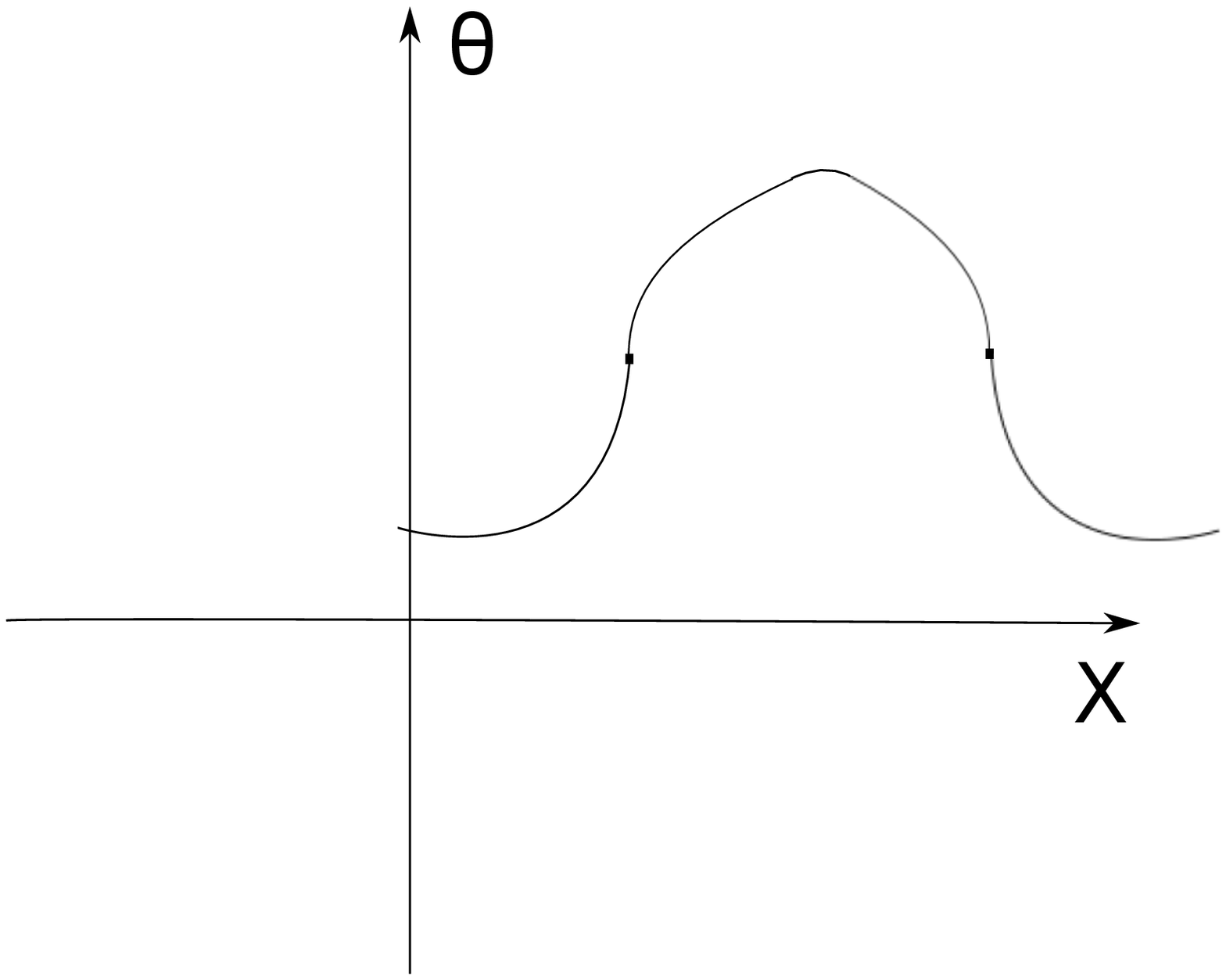}
\includegraphics[scale=.35]{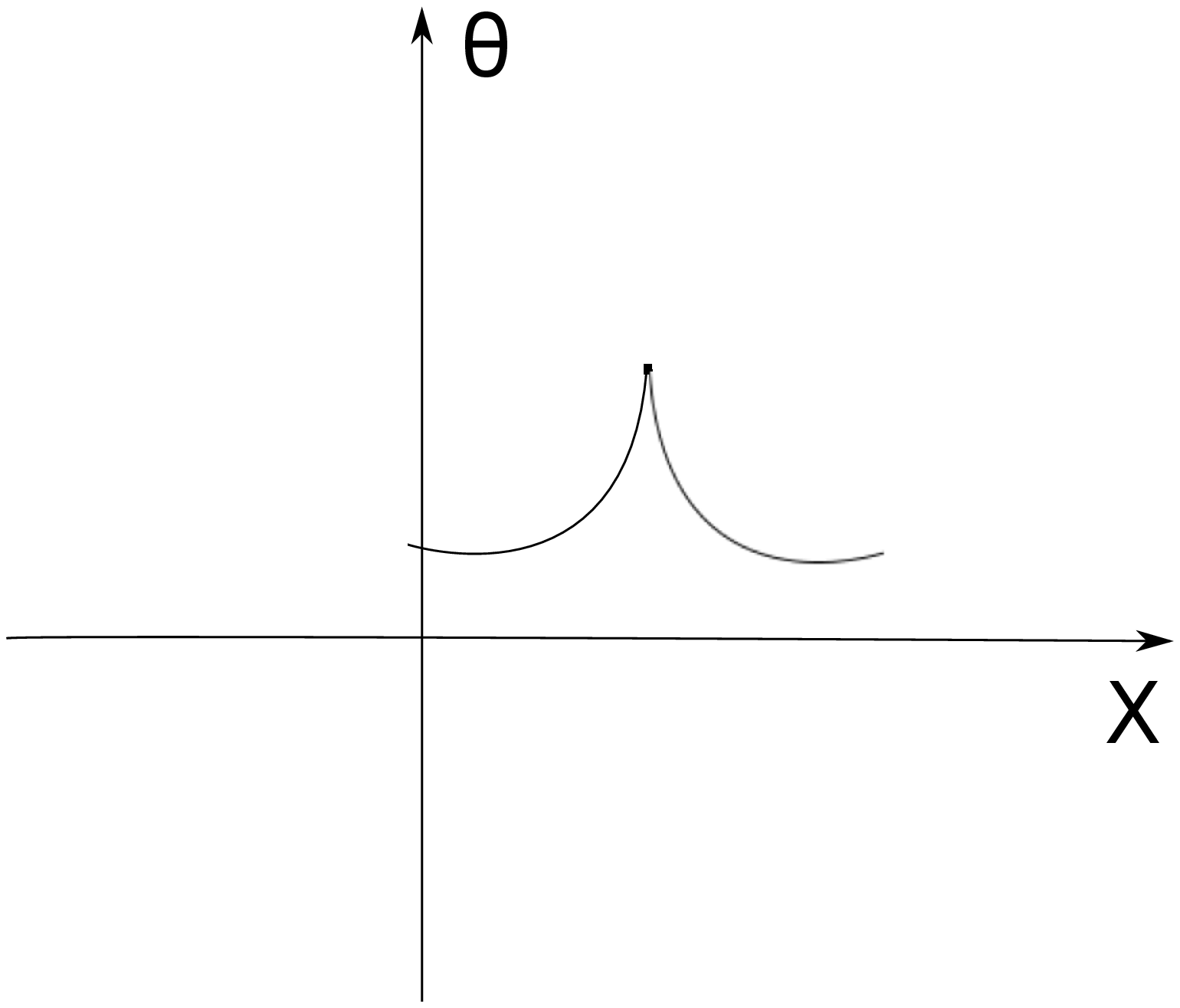}
\caption{{\em The first picture illustrates a one-sided cusp proved in Theorem \ref{sing}. The third picture illustrates a full cusp singularity.}}
\label{fig567}
\end{figure}

We close this section with  a remark on why the singularity is a cusp singularity.
\begin{remark}\label{rem}
By \eqref{R_0_sing}, we know that the maximum initial value of $|R(x,0)|$ is of $O(\ve)$ order. It is easy to get that $R$ will be bounded above in any $O(1)$ time by studying the Riccati equation $\eqref{RxSx}_2$ and using $c'>0$ which can be proved similarly as \eqref{cd_sign} when $\ve$ is small enough. Thus, at the point of blowup, one has 
\[
S=\theta_t-c\theta_x=\infty,\quad |R|=|\theta_t+c\theta_x|<\hbox{constant},
\]
which imply that $\theta_t=\infty$ and $\theta_x=-\infty$. The singularity is typically one-sided cusp. For carefully designed initial data, two opposite one-sided cusps might occur at the same time and the same location to form a full cusp.
See Figure \ref{fig567}.

 Together with the energy decay for smooth solutions, we know that the singularity formed in finite time is a cusp singularity with  derivatives $|\theta_x|$ and $|\theta_t|$ being infinity, but the $L^2$ norms of which are finite. Hence $\theta$ is H\"older continuous with exponent $1/2$. By the estimate on $v_t$ in Lemma \ref{lemma3.1} and   $u_x=v_t-\theta_x$, we also know that $u(\cdot,t)$ is H\"older continuous with exponent $1/2$ before and at the blowup.
\end{remark}

\section{A semilinear system in characteristic coordinates}\label{SLS}

As commented in Section \ref{sec-results} on the approach for global existence result, we will rewrite system \eqref{simeqn}  into a semilinear system in characteristic coordinates.   

For any smooth solution   $(u(x,t),\theta(x,t))$ of \eqref{simeqn0}, the equations of the characteristics are
\beq\label{char}
\frac{d x^\pm(s)}{d s}=\pm c\bigl(\theta(x^\pm(s),s)\bigr),
\eeq
where, at time $s$,
\[x^\pm(s)\equiv x^\pm(s;\, x,\, t)\]
denote the forward and backward characteristics passing through the point $(x,t)$, respectively.
Using the variables
 \[
 S := \theta_t - c(\theta) \theta_x,\quad
 R := \theta_t+c(\theta) \theta_x,
\]
 defined  in (\ref{S_R_def}),
we introduce new coordinates $(X, Y)$ by
\begin{align}\label{charaXY}\begin{split}
X\equiv X(x,t):=&	\int_1^{x^-(0;\, x,\, t)} (1+ R^2(x',0))\,d x',\\
 Y\equiv Y(x,t):=&	\int_{x^+(0;\, x,\, t)}^1 (1+ S^2(x',0))\,d x'.
 \end{split}
\end{align}
It is easy to check that $X$ and $Y$ are constants along backward and forward characteristic, respectively; that is, 
\beq\label{X_Y_eqn}
 X_t -c(\theta) X_x=0 \com{and}  Y_t +c(\theta) Y_x=0.
\eeq
Here we use $1+ R^2$ and $1+ S^2$ as the integrands in \eqref{charaXY} just for later convenience in assigning the boundary data. One could choose other nonzero integrable functions as the integrands.
For any smooth function $f$, it follows from (\ref{charaXY}) that  
\beq\left\{\begin{array}{l}\label{trans}
f_t +c(\theta) f_x=f_X\,(X_t +c(\theta) X_x) +f_Y\, (Y_t +c(\theta) Y_x)=2c X_x f_X,\\
f_t -c(\theta) f_x=f_X\,(X_t -c(\theta) X_x) +f_Y\, (Y_t -c(\theta) Y_x)=-2c Y_x f_Y.
\end{array}\right.
\eeq

In order to complete the system, we introduce several variables: 
\beq\label{defwz}
w=2\arctan R,\qquad z=2\arctan S,
\eeq
and 
\beq\label{defpq}
p=\frac{1+R^2}{X_x},\qquad q=\frac{1+S^2}{-Y_x}.
\eeq
Furthermore,
set $f=x$ and $f=t$ in two equations in \eqref{trans} to get
\beq\label{cxtXY}
\left\{
\begin{array}{rcl}
\ds c&=&2cX_x x_X,\\
\\
\ds -c&=&-2cY_x x_Y,
\end{array}
\right.
\quad 
\left\{
\begin{array}{rcl}
\ds 1&=&2cX_x t_X,\\
\\
\ds 1&=&-2cY_x t_Y.
\end{array}
\right.
\eeq
Using \eqref{defwz} and \eqref{defpq}, it holds
\beq\label{eqnxt}
\left\{
\begin{array}{ll}
\ds x_X=\frac{1}{2X_x}=\frac{1+\cos w}{4}p,\\
\\
\ds x_Y=\frac{1}{2Y_x}=-\frac{1+\cos z}{4} q,
\end{array}
\right.
\qquad 
\left\{
\begin{array}{ll}
\ds t_X=\frac{1}{2cX_x}=\frac{1+\cos w}{4c}p,\\
\\
\ds t_Y=-\frac{1}{2cY_x}=\frac{1+\cos z}{4c} q.
\end{array}
\right.
\eeq

Then  system \eqref{simeqn} can be written as follows.
 \beq\label{semilinear_new}
\begin{split}
\theta_X&=\frac{\sin w}{4c}p, \quad \theta_Y=\frac{\sin z}{4c}q,\\
z_X&=\frac{p}{4c}
\Big\{\frac{c'}{c}(\cos^2\frac{w}{2}-\cos^2\frac{z}{2})-\sin w\cos^2\frac{z}{2}-\sin z\cos^2\frac{w}{2}-4J\cos^2\frac{z}{2}\cos^2\frac{w}{2} \Big\}, \\
w_Y&=\frac{q}{4c}
\Big\{\frac{c'}{c}(\cos^2\frac{z}{2}-\cos^2\frac{w}{2})-\sin w\cos^2\frac{z}{2}-\sin z\cos^2\frac{w}{2}-4J\cos^2\frac{z}{2}\cos^2\frac{w}{2}\Big\}, \\
p_Y&=\frac{pq}{2c}\Big\{\frac{c'}{4c}(\sin z-\sin w)
-\frac{1}{4}\sin w\sin z-\sin^2 \frac{w}{2}\cos^2\frac{z}{2}
-J\sin w\cos^2\frac{z}{2}\Big\},\\
q_X&=\frac{pq}{2c}\Big\{\frac{c'}{4c}(\sin w-\sin z)
-\frac{1}{4}\sin w\sin z-\sin^2 \frac{z}{2}\cos^2\frac{w}{2}
-J\sin z\cos^2\frac{w}{2}\Big\}
\end{split}
\eeq
where, recalling that,  
\beq\label{J_def}
J=u_x+\theta_t=u_x+\frac{S+R}{2}.
\eeq
A derivation of the semilinear system \eqref{semilinear_new}-\eqref{J_def}  is given in  Appendix \ref{Sec_A3}. It is a special case of  system  \eqref{semilinearA} that is derived from \eqref{time_poi}.
 
\begin{remark}We observe that the new system is invariant under translation by $2\pi$ in $w$ and
$z$. Actually, it would be more precise to work with the variables $\hat w=
e^{iw}$ and $\hat z=
e^{iz}$.
However, for simplicity we shall use the variables $w$, $z$, keeping in mind that they range
on the unit circle $[-\pi,\pi]$ with endpoints identified.
\end{remark}

\section{Global existence and energy decay}\label{GER}
In this section, we prove the global existence and energy decay in Theorem \ref{thmplc1}. Here by global existence, we mean that, for any $T>0$, there exists a solution for  $t\in[0,T]$. 

We will work on system \eqref{simeqn} and accomplish the proof of Theorem \ref{thmplc1} in three steps. 
First, for any fixed $J$, we solve for $\theta=\theta^J$ of the wave equation  $\eqref{simeqn}_2$  with $v_t$ replaced by $J$. Then, in the second step, substituting $\theta^J$ into  $\eqref{simeqn}_1$ and solving  for $v=v^{J}$, we obtain a map from $J$ to $\mathcal{M}(J):=v^J_t$,  which is basically \eqref{hatu}-\eqref{hatvt}, and then show this map has a fixed point, for a small time interval. Finally, we prove the energy decay and extend the local existence result to a global one.

Since we will use the Schauder fixed point theorem, we cannot achieve uniqueness in this paper. As a consequence, the solutions obtained for  $t\in[0,T_1]$ and for $t\in[0,T_2]$ with $T_2>T_1$ might not agree with each other  for $t\in[0,T_1]$. Hence we cannot claim an existence result for $t\in [0,\infty)$.
The uniqueness   for the variational wave equation was proved in \cite{BCZ}. 
So this might be a very interesting technique issue which, hopefully,  could be conquered later, although the method in this paper fails to give a uniqueness.


\subsection{Existence result for the wave equation with any given $J$\label{sec6.1}}
In this subsection, we first prove the existence of a weak solution for
\beq\label{192}
\theta_{tt}+\theta_t=c(\theta)(c(\theta)\theta_{x})_x
-J(x,t)
\eeq
where $J(x,t)$ is given for any $(x,t)\in \R\times\R^+$, with
\beq\label{JbarT}
\|J\|_{C^\alpha\cap L^\infty\cap L^2(\Omega_{T})}=: \bar{J}(T)<\infty,
 \eeq
 where 
 \[
 \Omega_{T}=\{(x,t)\,|\,x\in\R,\;t\in[0,T]\}
 \]
 for any $T\geq 0$. We fix an arbitrary time $T>0$, and only prove the existence of solution in $t\in[0,T]$. The equation \eqref{192} comes from $\eqref{simeqn}_2$ with $v_t$ replaced by $J$. 
 
 In the rest of paper, we always assume that $\alpha$ is a constant such that
\[
0<\alpha<\frac{1}{4}.
\]

By the initial condition \eqref{tecinitial}, there exist $x_a$ and $x_b$, such that,
$\theta_1(x)$, $\theta'_0(x)$ and $J(x,0)=u'_0(x)+\theta_1(x)$ are small enough for $x\not\in( x_a, x_b)$, particularly,  if $\Omega^a$ and $\Omega^b$ are domains of dependence with bases $(-\infty, x_a]$ and $[x_b,\infty)$, respectively, then solutions over $\Omega^a$ and $\Omega^b$ will not blow up before time $T$.

In fact, because $J$ is in $C^\alpha\cap L^2\cap L^\infty$ with $J(\pm \infty,0)=0$, for any $\ve>0$, when $x_a$ and $x_b$ have sufficiently large magnitudes, $|J(x,t)|<\ve$ for any $x<x_a$ or $x>x_b$ and $0\leq t\leq T$. Hence, one can find a priori bounds on gradient variables $R$ and $S$ using equations \eqref{S_sing}-\eqref{R_sing} and the initial smallness of $R$ and $S$. The existence and uniqueness for $C^1$ solutions in domains of dependence $\Omega^a$ and $\Omega^b$ is a classical result (see \cite{Lidaqian, liyu} together with equations \eqref{S_sing}-\eqref{R_sing}). One can also use the semilinear equation method to find this unique solution. Note, when $T$ increases, $|x_a|$ and $|x_b|$ may increase.

The solution of \eqref{192} has a finite speed of propagation, so we can split the region $\Omega_T$ into different domains of dependence. As described in Figure \ref{fig_last} and its caption, now we only have to consider a characteristic triangle $\Omega^0$, which, together with $\Omega^a$ and $\Omega^b$, covers $\Omega_T$.


Due to the assumption (\ref{condc}) that the wave speed $c$ has positive lower and upper bounds,   a time $T_0$, whose definition is clear from  Figure \ref{fig_last}, is finite.
 
\begin{figure}[htb]
\centering
\includegraphics[scale=.35]{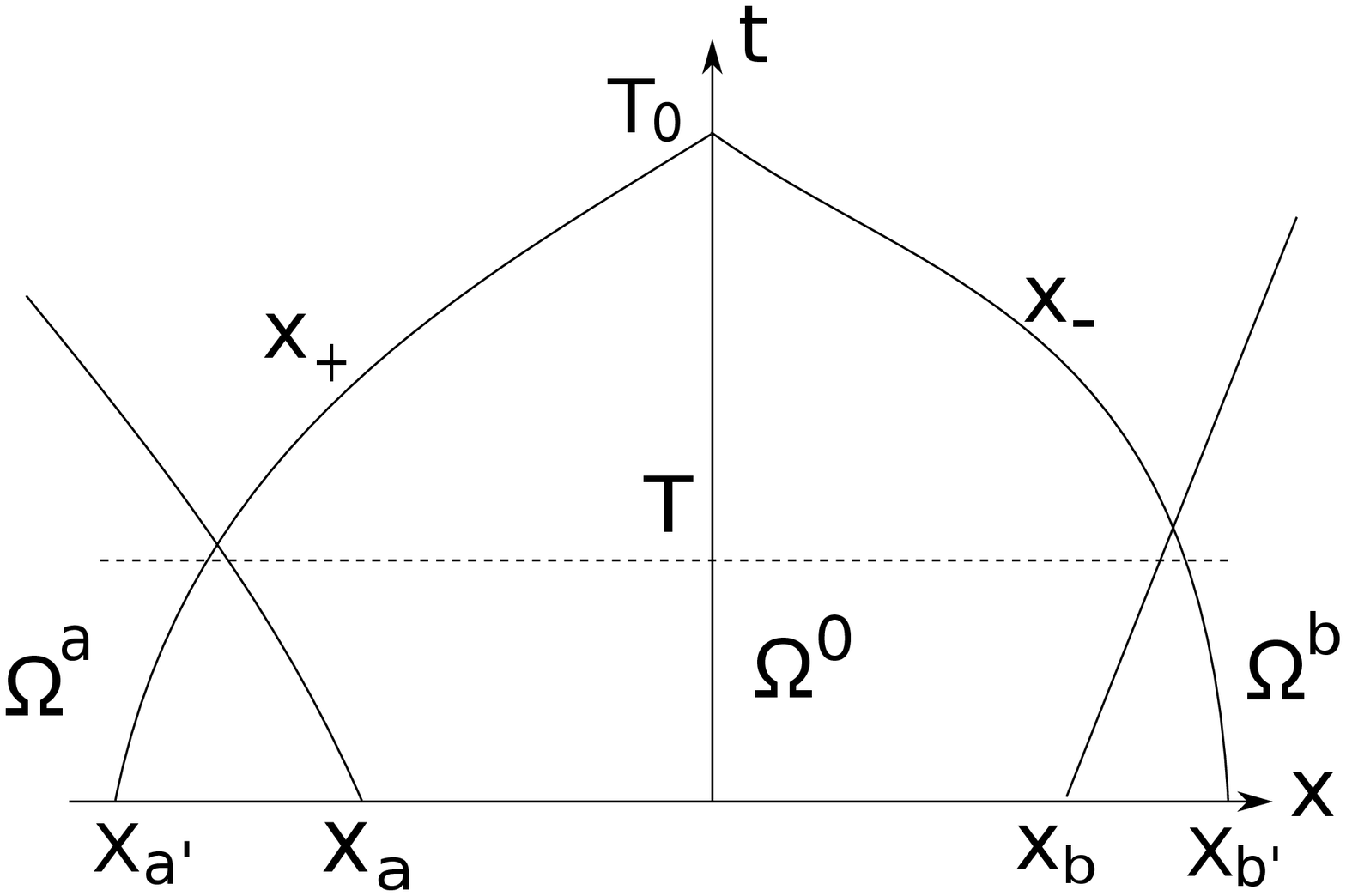}\qquad
\includegraphics[width=.35\textwidth]{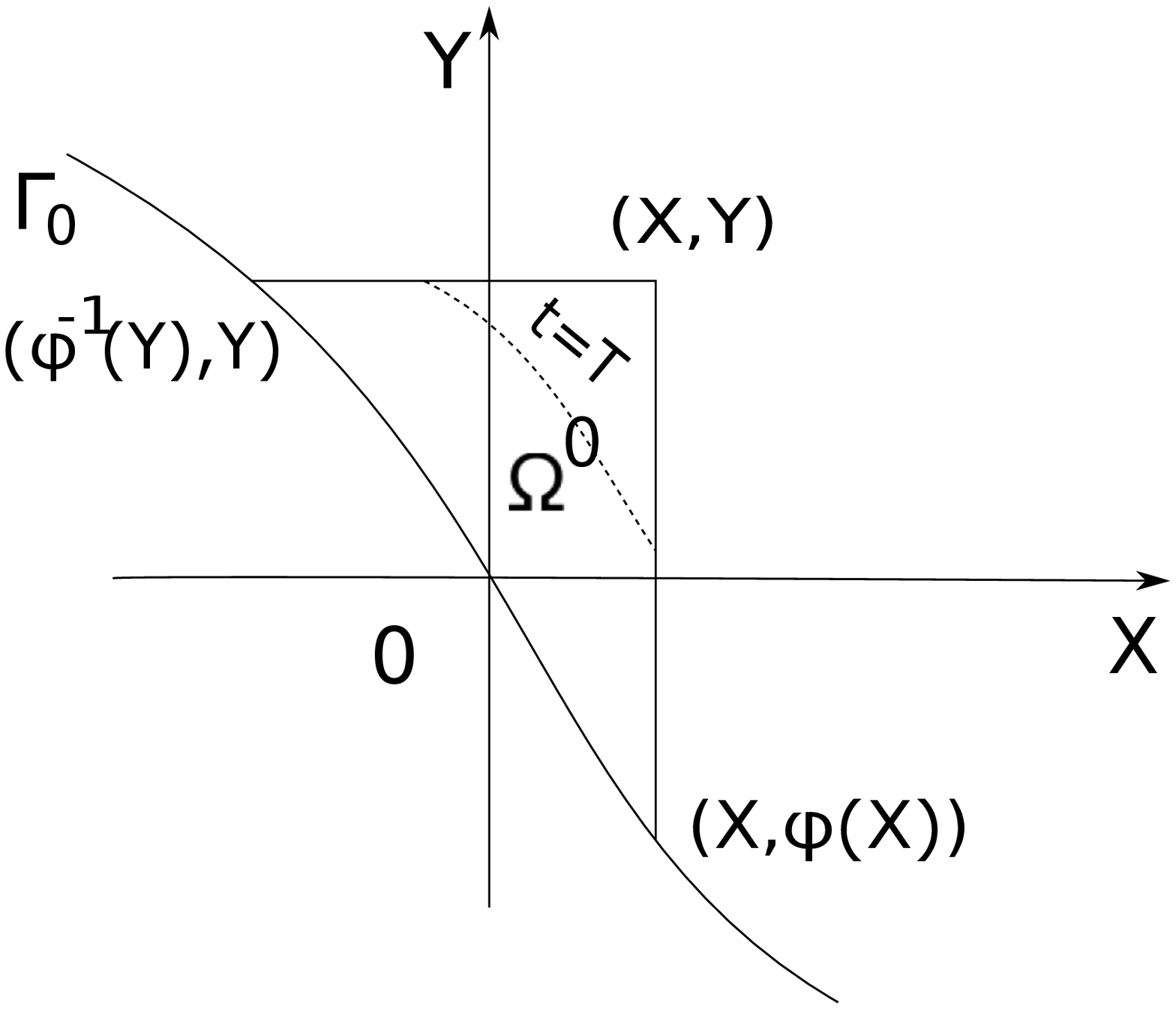}
\caption{{\em Left: $\Omega^a$ and $\Omega^b$ are domains of dependence corresponding to 
$x<x_a$ and $x>x_b$, respectively. When $-x_a$ and $x_b$ are sufficiently large, there will be no gradient blowup in $\Omega^a$ and $\Omega^b$  before $t=T$. $\Omega^0$ is another characteristic triangle, which covers $\Omega_T$ together with $\Omega^a$ and $\Omega^b$.
Right: region $\Omega^0$ on the $(X,Y)$-plane.}}
\label{fig_last}
\end{figure}

\subsubsection{Setup of the boundary value problem over $\Omega^0$ in the $(X,Y)$-coordinates.}
Now we only have to consider the region $\Omega^0$ in Figure \ref{fig_last}.

The key idea is to first construct the solution of semilinear system \eqref{semilinear_new} then change it back to the original system. This idea was used for the variational wave equation in \cite{BZ}.
The appearance of the source term $J$ in our equation makes the existence proof harder than that in \cite{BZ}. In fact, we do not have uniform a priori estimates for $p$ and $q$ by directly using the semi-linear system \eqref{semilinear_new}  in the $(X,Y)$-plane. Instead, we first establish the local existence for solutions in the $(X,Y)$-coordinates then transform it to the $(x,t)$-coordinates. Next, we will prove the key Lemma \ref{lemma3.3}, in which the $L^\infty$ estimates for $p$ and $q$ are given. This helps extend the solution to $t\in[0,T]$.




%

Now we start from the  boundary value problem on $\Omega^0$ in the $(X,Y)$-plane. The system for this boundary value problem is given in \eqref{192} with $J(x,t)$ given and satisfying \eqref{JbarT}.

The initial line $t=0$ in the $(x,t)$-plane is transformed to a
parametric curve 
\beq\label{i1} 
\Gamma_0 :=   \big\{ (X,Y):\,~~Y=\varphi (X)\big\}\subset {\mathbb
R}^2
\eeq
in the $(X, Y)$-plane,  where $Y = \varphi( X )$ if and only if
there is   $x$ such that 
\beq\label{12} 
 X =\displaystyle \int_0^x [1+R^2(y,0)]\,dy\;\mbox{ and }\;   
 Y =\ds \int_x^0[1+S^2(y,0)]\,dy.
  \eeq 
The curve $\Gamma_0$ is non-characteristic. 
The two functions $X= X(x), Y = Y(x)$  are well-defined
and absolutely continuous.
So
$\varphi (X)$ is continuous and strictly decreasing in $X$ since $X(x)$ is strictly
increasing while $Y(x)$ is strictly decreasing.  From \eqref{initial},
\eqref{E.00} and \eqref{S_R_def}, it follows 
\beq \big|X+\varphi (X)\big|  \leq 4 {\mathcal
E}(0)<\infty\,.\label{xp} \eeq 
As $(x,t)$ ranges over the domain
$\Omega^0$, the corresponding variable $(X,
Y)$ ranges over the set enclosed by one vertical line, one horizontal line and $\Gamma_0$, see Figure \ref{fig_last}. Without loss of generality, we still use $\Omega^0$ to denote this region in the $(X,
Y)$-plane.

Along the initial curve $\Gamma_0$ in \eqref{i1}
parametrized by $x\mapsto \big(X(x), \,Y(x)\big)$ using \eqref{12}, we can thus
assign the boundary data $\bar \theta, \bar z,\bar w$ $\in
L^\infty$ by their definitions in \eqref{defwz} evaluated at the initial data, and $\bar p=1$ and $\bar q=1$ by \eqref{defpq}, where we also used \eqref{initial}.
Therefore, it is easy to check that
\beq\notag
\int_{\Gamma_0}\frac{1-\cos \bar w}{4}\,dX-\frac{1-\cos \bar z}{4}\,dY\leq 2\mathcal E (0).
\eeq

Finally, we denote
 \beq\label{defD}
 D=  \hbox{Distance between two vertices of }\Omega^0 \hbox{ on }\Gamma_0.
 \eeq
Since the wave speed $c$ has positive lower and upper bounds, clearly, $D$ is a constant depending only on the initial condition.

\subsubsection{Local existence of the boundary value problem in $(X,Y)$ coordinates.}\label{LocE}
Now we first show a local existence result, by finding a fixed point of the map
\[
(\hat\theta, \hat w, \hat z, \hat p, \hat q)=\mathcal T_1(\theta, w, z, p,q)
\]
 defined by the solution of  \eqref{semilinear_new}, more precisely, 
\beq\label{hattheta}
\hat{\theta}(X,Y)=\theta(X,\phi(X))+\int_{\phi(X)}^Y\frac{\sin z}{4c}q (X,\bar Y)d\bar Y,
\eeq
\beq\label{hatz}
\begin{split}
\hat{z}(X,Y)=&z(\phi^{-1}(Y),Y)+
\int_{\phi^{-1}(Y)}^X p\Big\{\frac{c'}{4c^2}(\cos^2\frac{w}{2}-\cos^2\frac{z}{2})\\
&-\frac{1}{4c}(\sin w\cos^2\frac{z}{2}+\sin z\cos^2\frac{w}{2})
-\frac{1}{c}J(x_p,t_p)\cos^2\frac{z}{2}\cos^2\frac{w}{2}\Big\}(\bar X,Y)d\bar X ,
\end{split}
\eeq
\beq\label{hatw}
\begin{split}
\hat{w}(X,Y)=&w(X,\phi(X))+\int_{\phi(X)}^Y
q
\Big\{\frac{c'}{4c^2}(\cos^2\frac{z}{2}-\cos^2\frac{w}{2})\\
&-\frac{1}{4c}(\sin w\cos^2\frac{z}{2}+\sin z\cos^2\frac{w}{2})
-\frac{1}{c}J(x_m,t_m)\cos^2\frac{z}{2}\cos^2\frac{w}{2}\Big\}(X,\bar Y)d\bar  Y ,
\end{split}
\eeq
\beq\label{hatp}
\begin{split}
\hat{p}(X,Y)=&p(X,\phi(X))+\int_{\phi(X)}^Y pq\left\{\frac{c'}{8c^2}(\sin z-\sin w)\right.\\
&\left.
-\frac{1}{2c}[\frac{1}{4}\sin w\sin z+\sin^2 \frac{w}{2}\cos^2\frac{z}{2}]-\frac{1}{2c}J(x_m,t_m)\sin w\cos^2\frac{z}{2}\right\}(X,\bar Y)d\bar Y,
\end{split}
\eeq
\beq\label{hatq}
\begin{split}
\hat{q}(X,Y)=&q(\phi^{-1}(Y),Y)+\int_{\phi^{-1}(Y)}^X pq\left\{\frac{c'}{8c^2}(\sin w-\sin z)\right.\\
&\left.
-\frac{1}{2c}[\frac{1}{4}\sin w\sin z+\sin^2 \frac{z}{2}\cos^2\frac{w}{2}]-\frac{1}{2c}J(x_p,t_p)\sin z\cos^2\frac{w}{2}\right\}(\bar X,Y)d\bar X,
\end{split}
\eeq
where
\beq\label{xpdef}
x_p(\bar X,Y)=x(\bar X,\phi(\bar X))+\int_{\phi(\bar X)}^Y-\frac{1+ \cos z}{4}q d\tilde Y,
\eeq
\beq\label{tpdef}
t_p(\bar X,Y)=t(\bar X,\phi(\bar X))+\int_{\phi(\bar X)}^Y\frac{1+ \cos z}{4c}q d\tilde Y,
\eeq
\beq\label{xmdef}
x_m(X,\bar Y)=x(\phi^{-1}(\bar Y),\bar Y)+\int_{\phi^{-1} (\bar Y)}^X-\frac{1+ \cos w}{4}p d\tilde X,
\eeq
and
\beq\label{tmdef}
t_m(X,\bar Y)=t(\phi^{-1}(\bar Y),\bar Y)+\int_{\phi^{-1} (\bar Y)}^X-\frac{1+ \cos w}{4c}p d\tilde X.
\eeq

Note that $(x_p,t_p)$
and $(x_m,t_m)$ come from different but equivalent equations in \eqref{eqnxt}. More
importantly, such a choice makes sure that $(x_p,t_p)(\bar X, Y)$ and $(x_m,t_m)(X, \bar Y)$
have finite partial derivative in $Y$ and $X$, respectively.

For brief, we set 
\[
V=(\theta, w, z, p,q)\quad \hbox{and}\quad \hat V=(\hat\theta, \hat w, \hat z, \hat p, \hat q),
\]
and let 
\[\bar{V}(X)=(\theta, w, z, p,q)(X,\phi(X))\]
 denote the initial data.

Fix a constant $K_1$ large  to be determined later on (say $K_1\gg 2\|\bar V(X)\|_{C^{\alpha}\cap L^{\infty}}$). Define the set
\beq\label{defK1}
\mathcal{K}_1=\left\{V\,\big |\,\| V(X,Y)\|_{C^{\alpha}\cap L^\infty(\hat \Omega_{\delta})}\leq K_1
,\ \ V(X, \phi(X))=\bar V(X)\right\} 
\eeq
where $\delta>0$ is a constant and 
\beq\label{Ome_del}
\hat \Omega_{\delta}=\big\{(X,Y)\in \Omega^0:\; \mbox{dist}((X,Y), \Gamma_0)\leq\delta \big\}.
\eeq

Note that the space $\mathcal{K}_1$ is compact in $C^0(\hat \Omega_{\delta})$, where $\hat \Omega_{\delta}$ is a two dimensional bounded connected region.
We will apply the Schauder Fixed-Point Theorem to show that, there exists $\delta>0$, such that, the map $\mathcal{T}_1$ has a fixed point in the set $\mathcal{K}_1$.


The proof is standard and we will check the conditions for the theorem  briefly. 

For any local solution, when $\delta$ is small enough, it is easy to find uniform a priori bounds on $p$ and $q$ by corresponding equations in \eqref{semilinear_new}. We omit details here because we will later give much stronger global a priori bounds on $p$ and $q$, which will allow us to extend the solution to a global one.

Since $\mathcal{K}_1$ is a compact set in $C^0(\hat \Omega_{\delta})$ space, it suffices to show that the map $\mathcal{T}_1$ is continuous under  $C^0$ norm and maps $\mathcal{K}_1$ to itself.

It follows directly from \eqref{hattheta}-\eqref{hatq} that the map $\mathcal T_1$ is continuous since $J(x,t)$ is continuous in $x$ and $t$, and that  $\mathcal T_1(V)(X,\phi(X))=V(X,\phi(X))$.
Now recall that $J(x,t)$  is H\"older continuous in $x$ and $t$, and $(x_p,t_p)(\bar X, Y)$ and $(x_m,t_m)(X, \bar Y)$ have finite partial derivatives with respect to $Y$ and $X$. By
\eqref{hattheta}-\eqref{hatq}, if $\delta$ is small enough,  $\mathcal{T}_1$ maps $\mathcal{K}_1$ to itself. For example, $\hat p_Y$ is bounded, so $\hat p$ is Lipschitz in the $Y$ direction. In the $X$ direction, the $C^\alpha$ norm of $\hat p$ is different from the $C^\alpha$ norm of $p(X,\phi(X))$ by $C\delta$ times $\|V\|_{C^\alpha\cap L^\infty}$ for some constant $C$, where we use $J(x_m,t_m)$ is H\"older in $(x_m,t_m)$, and $(x_m,t_m)$ is Lipschitz in the $X$ direction.

So, we claim that there is a fixed point for the map $\mathcal{T}_1$ in $\mathcal{K}_1$.

 Note that $\delta$ depends on $K_1$ while $K_1$ can be determined by the a priori bound on $\| V(X,Y)\|_{C^{\alpha}\cap L^\infty(\Omega^0)}$. Later, in Lemma \ref{lemma3.3}, we provide the a priori bounds on $p$ and $q$ in $\Omega^0$, by the map ${\mathcal T}_1$, and then we can find the $C^{\alpha}$ bound on $V$ as we just discussed. Clearly, the $L^\infty$ bound on $V$ is finite. So $K_1$ will be given by a constant only depending on the initial data $\bar{V}$.  As a consequence, $\delta$ is fixed with respect to $K_1$. This allows one to apply the same procedure to extend the solution to $\Omega^0$.


\begin{remark}
The equations of $\theta_X$ and $\theta_Y$ are equivalent since it is easy to check that $\theta_{XY}=\theta_{YX}$, as in \cite{BZ}. The semilinear system \eqref{semilinear_new} are invariant under translation by $2\pi$ in $w$ and $z$. It would be more precise to work with the variables $e^{iw}$ and $e^{iz}$. For simplicity, we shall use the variables $w$ and $z$ keeping in mind that they range on the unit circle $[-\pi,\pi] $ with endpoints identified. 
\end{remark}

\subsubsection{Inverse transformation\label{Sec_inv}}
Now we can carry out the inverse transformation from the $(X,Y)$-coordinate to the $(x,t)$-coordinate. This step is very similar to the one in \cite{BZ}. Let's only briefly introduce the key steps for completeness.

For convenience, we recall \eqref{eqnxt} below
\beq\label{eqnxt1}
\left\{
\begin{array}{ll}
\ds x_X=\frac{1}{2X_x}=\frac{1+\cos w}{4}p,\\
\\
\ds x_Y=\frac{1}{2Y_x}=-\frac{1+\cos z}{4} q,
\end{array}
\right.
\qquad 
\left\{
\begin{array}{ll}
\ds t_X=\frac{1}{2cX_x}=\frac{1+\cos w}{4c}p,\\
\\
\ds t_Y=-\frac{1}{2cY_x}=\frac{1+\cos z}{4c} q.
\end{array}
\right.
\eeq

It is easy to check that 
\[
t_{XY}=t_{YX},\qquad x_{XY}=x_{YX}
\]
using \eqref{eqnxt1} and \eqref{semilinear_new}, so two $t$ equations and two $x$ equations in \eqref{eqnxt1} are equivalent, respectively.
Hence, by \eqref{xpdef}-\eqref{tmdef},
\[
(x_m,t_m)=(x_p,t_p)=:(x,t)
\]
And, \eqref{eqnxt1} provides an inverse transformation from the $(X,Y)$-coordinates
to the $(x,t)$-coordinates.

Next,  we note that   the map from $(X,Y)$ to $(x,t)$ may  not be one-to-one, since $(t_X,x_X)$ and $(t_Y,x_Y)$ might vanish as singularity forms. But,  
if 
\[x(X_1,Y_1)=x(X_2,Y_2),\quad t(X_1,Y_1)=t(X_2,Y_2),\] 
then 
\[
\theta(x(X_1,Y_1),t(X_1,Y_1))=\theta(x(X_2,Y_2),t(X_2,Y_2)),
\]
and hence, the function $\theta(x,t)$ is well defined and
\beq\label{xt_co}
dx\,dt=\frac{pq}{2c(1+R^2)(1+S^2)}dX\,dY=\frac{pq}{2c}\cos^2\frac{w}{2}\cos^2\frac{z}{2}\,dX\,dY.
\eeq
We omit the proof here and refer the readers to \cite{BZ} for more details.

\subsubsection{Global existence of \eqref{semilinear_new}} 
Now we extend the local solution of \eqref{semilinear_new} established in Section \ref{LocE} to $\Omega_T$.
{ As we discussed, we only need some global a priori bounds on $p$ and $q$, which will be given in the next key lemma. }


\begin{lemma}\label{lemma3.3} Consider any solution of \eqref{semilinear_new} constructed in the local existence result up to the region $\Omega^0$. Then, we have 
\beq\label{bdspq}
0<A_1\leq \max_{(X,Y)\in \Omega^0 }\left\{p(X,Y),\, q(X,Y)\right\}\leq A_2,
\eeq
for some constants $A_1$ and $A_2$ specified in the proof.
\end{lemma}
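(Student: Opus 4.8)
The plan is to derive upper and lower bounds on $p$ and $q$ by writing ODEs for these quantities along characteristic directions and exploiting the sign of the ``leftover'' damping term recorded in \eqref{damping}. From the last two equations of \eqref{semilinear_new}, $p$ satisfies a transport equation in the $Y$-direction and $q$ in the $X$-direction. Dividing by $p$ and $q$ respectively and integrating, one gets, along a forward characteristic (constant $X$),
\[
\log p(X,Y) = \log \bar p(X) + \int_{\phi(X)}^{Y}\frac{q}{2c}\Big\{\frac{c'}{4c}(\sin z-\sin w)-\frac14\sin w\sin z-\sin^2\tfrac{w}{2}\cos^2\tfrac{z}{2}-J\sin w\cos^2\tfrac{z}{2}\Big\}\,d\bar Y,
\]
and a symmetric expression for $\log q$ with the roles of $X,Y$ and $w,z$ interchanged. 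The first observation is that $\bar p=\bar q=1$ on $\Gamma_0$, so the boundary term is harmless. The essential point is to control the integrals; the factor $q$ (respectively $p$) inside makes this a Gr\"onwall-type coupling between $p$ and $q$.

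The key step is to bound $\iint (q/2c)\,|\cdots|\,$ and $\iint (p/2c)\,|\cdots|$ uniformly. The bad terms are $\sin^2\tfrac w2\cos^2\tfrac z2$ and $J\sin w\cos^2\tfrac z2$. For the first, I would use the change-of-variables identity \eqref{xt_co}, which shows $\frac{pq}{2c}\cos^2\tfrac w2\cos^2\tfrac z2\,dX\,dY=dx\,dt$ has finite total mass over $\Omega^0$ (it is bounded by $|\Omega_T|$ in $(x,t)$, a fixed constant depending on $T$ and $C_U$); combined with the energy-type bound $\int_{\Gamma_0}\frac{1-\cos\bar w}{4}dX-\frac{1-\cos\bar z}{4}dY\le 2\mathcal E(0)$ propagating along characteristics (as in \cite{BZ}), one controls $\iint \frac{pq}{2c}\sin^2\tfrac w2\cos^2\tfrac z2$ by a constant. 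This is exactly where the damping inequality \eqref{damping} enters: the ``leftover'' damping coefficient in $\eqref{simeqn}_2$ guarantees that the energy functional in the $(X,Y)$-variables is non-increasing along the flow, so these integrals do not grow with $T$ faster than linearly. For the $J$-term, one uses $\|J\|_{L^\infty(\Omega_T)}=\bar J(T)<\infty$ from \eqref{JbarT}, again pairing $|J|$ with the area element $\frac{pq}{2c}\cos^2\tfrac w2\cos^2\tfrac z2\,dX\,dY=dx\,dt$ wherever a full $\cos^2$ pair is available, and estimating the remaining single-$\cos^2$ contributions by the same energy bound. Once $\iint \frac{pq}{2c}|\cdots|\le C(\mathcal E(0),\bar J(T),T)=:C_0$, the integral representation gives $\log p(X,Y)\le C_0$ pointwise after absorbing the coupling with $q$, hence $p,q\le e^{C_0}=:A_2$.

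For the lower bound I would run the same argument in the other direction: $\log p(X,Y)\ge \log\bar p(X) - \iint\frac{q}{2c}|\cdots|\ge -C_0$, giving $p,q\ge e^{-C_0}=:A_1>0$. To make the Gr\"onwall coupling rigorous without circularity, I would first establish the bounds on the strip $\hat\Omega_\delta$ where local existence already holds with $p,q$ bounded, then bootstrap: the bound $C_0$ on the double integral does not depend on $\delta$, so it controls $p,q$ on all of $\Omega^0$, and a continuation argument (the local existence step can be restarted from any non-characteristic curve with the uniform $\delta$ fixed by these bounds, as noted in the paragraph after \eqref{Ome_del}) extends the solution with the same bounds up to $\Omega_T$.

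The main obstacle I anticipate is the term $J\sin w\cos^2\tfrac z2$ (and its mirror $J\sin z\cos^2\tfrac w2$): it carries only a single $\cos^2$ factor, so it cannot be absorbed directly into the area element $dx\,dt$, and $|J|$ is merely bounded, not integrable along characteristics a priori. Handling it requires splitting $\int |J\sin w\cos^2\tfrac z2|\,q\,d\bar Y$ using $|\sin w|\cos^2\tfrac z2 \le 2|\sin\tfrac w2|\cos\tfrac w2\cos^2\tfrac z2$ and Cauchy--Schwarz to trade one power of $\cos\tfrac w2$ against the energy measure $\frac{1-\cos w}{4}dX$-type density, so that what remains is again paired with a full $\cos^2\tfrac w2\cos^2\tfrac z2$ against $\frac{pq}{2c}$ — i.e.\ against $dx\,dt$. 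Getting the constants in this splitting to close (so that $A_1,A_2$ depend only on $\mathcal E(0)$, $\bar J(T)$, $T$, $C_L$, $C_U$, $C_1$ and not on $\delta$ or the solution itself) is the delicate part of the estimate.
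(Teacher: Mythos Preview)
Your proposal has a genuine gap: you never actually bound the single integral $\int_{\phi(X)}^{Y} q(X,\bar Y)\,d\bar Y$ (and its mirror $\int p\,d\bar X$), which is what the exponential representation for $p$ requires. You write the correct formula for $\log p$ as a single integral in $\bar Y$, but then slide into estimating \emph{double} integrals $\iint \frac{pq}{2c}(\cdots)\,dX\,dY$ via the area element $dx\,dt$. Those double-integral bounds do not by themselves control the line integral of $q$ along a fixed characteristic; you need an extra idea to pass from one to the other, and your Cauchy--Schwarz splitting of the $J$-term does not supply it. As written the argument is circular: bounding the energy-type integrals you invoke already presupposes control on $\int q\,d\bar Y$ through the coupling.

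The paper closes this gap with a structural identity you are missing. One applies Green's theorem to the $1$-form $p\,dX - q\,dY$ over the characteristic triangle $\Sigma$ with vertex $(X,Y)$; this converts exactly the desired line integrals $\int p\,d\bar X + \int q\,d\bar Y$ into a double integral of $-(q_X+p_Y)$. The point is that $q_X+p_Y$, computed from \eqref{semilinear_new}, is
\[
-\frac{pq}{2c}\Big[(\tan\tfrac{w}{2}+\tan\tfrac{z}{2})^2 + 2J(\tan\tfrac{w}{2}+\tan\tfrac{z}{2})\Big]\cos^2\tfrac{w}{2}\cos^2\tfrac{z}{2},
\]
so the damping produces a \emph{negative perfect square} that absorbs the $J$ cross-term by completing the square, leaving only $\iint \frac{pq}{2c}|J|^2\cos^2\tfrac{w}{2}\cos^2\tfrac{z}{2}\,dX\,dY=\iint_{\tilde\Sigma}|J|^2\,dx\,dt\le \bar J(T)$. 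This yields $\int p\,d\bar X+\int q\,d\bar Y\le 2D+\bar J(T)$ directly, after which $p\le \exp(\sup|B|\cdot(2D+\bar J(T)))$ and similarly for the lower bound. Your instinct that the damping and the $L^2$ control of $J$ are the relevant ingredients is right, but the mechanism is this algebraic cancellation inside $q_X+p_Y$, not an energy-decay argument or a term-by-term Cauchy--Schwarz estimate.
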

\begin{proof}
	\begin{figure}[htp] \centering
		\includegraphics[width=.5\textwidth]{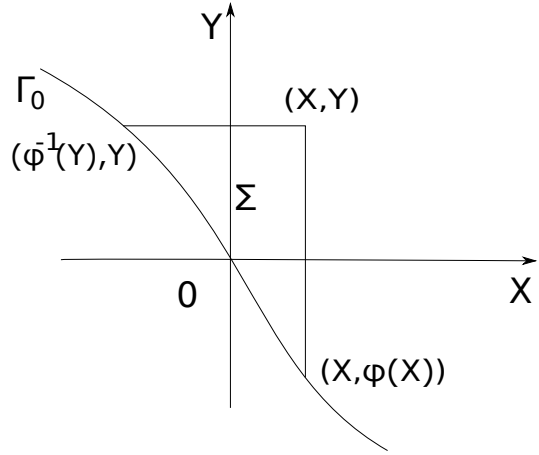}
	\caption{The domain $\Sigma$.}\label{fig:sig}
	\end{figure}
We consider a characteristic triangle $\Sigma$ in $\Omega^0$ enclosed by the line segment between $(X,Y)$ and $(X,\varphi(X))$, the line segment between $(X,Y)$ and $(\varphi^{-1}(Y),Y)$, and $\Gamma_0$. See Figure \ref{fig:sig}.
 If we denote equations of $p$ and $q$ in \eqref{semilinear_new} as
\[
q_X=A pq,\quad p_Y=B pq,
\]
then $p$ and $q$ are positive since $p=q=1$ on $\Gamma_0$. Furthermore,  direct computation using \eqref{semilinear_new} gives
\beq\notag
\begin{split}
\int_{\partial \Sigma}p\,d\bar X-q\,d\bar Y
=&-\iint_{\Sigma}q_X+p_Y\,d\bar Xd\bar Y\\
=&\iint_{\Sigma}\frac{pq}{2c}(\sin \frac{w}{2}\cos \frac{z}{2}+\sin \frac{z}{2}\cos \frac{w}{2})^2\,d\bar Xd\bar Y\\
&+\iint_{\Sigma}\frac{pq}{2c}J (\sin z\cos^2\frac{w}{2}+\sin w\cos^2\frac{z}{2})\,d\bar Xd\bar Y.
\end{split}
\eeq
 Thus, using $p=q=1$ on $\Gamma_0$ again,
\begin{eqnarray}
&&\int_{\varphi^{-1}(Y)}^Xp(\bar X, Y)\,d\bar X+\int^{Y}_{\varphi(X)}q(X, \bar Y)\,d\bar Y\nn\\
&&\quad =X-\varphi^{-1}(Y)+Y-\varphi(X)-\iint_{\Sigma}\frac{pq}{2c}(\sin \frac{w}{2}\cos \frac{z}{2}+\sin \frac{z}{2}\cos \frac{w}{2})^2\,dXdY\nn\\
&&\qquad-\iint_{\Sigma}\frac{pq}{2c}J (\sin z\cos^2\frac{w}{2}+\sin w\cos^2\frac{z}{2})\,dXdY \label{intqp}\\
&&\quad=X-\varphi^{-1}(Y)+Y-\varphi(X)-\iint_{\Sigma}\frac{pq}{2c}(\tan \frac{w}{2}+\tan \frac{z}{2})^2 \cos^2 \frac{z}{2}\cos^2\frac{w}{2}\,dXdY\nn\\
&&\qquad-\iint_{\Sigma}\frac{pq}{2c} 2 J (\tan \frac{w}{2}+\tan \frac{z}{2}) \cos^2 \frac{z}{2}\cos^2\frac{w}{2}\,dXdY\nn\\
&&\quad \leq X-\varphi^{-1}(Y)+Y-\varphi(X)
+\iint_{\Sigma}\frac{pq}{2c} |J|^2 \cos^2 \frac{z}{2}\cos^2\frac{w}{2}\,dXdY\nn\\
&&\quad\leq  X-\varphi^{-1}(Y)+Y-\varphi(X)+\iint_{\tilde\Sigma}|J|^2\,dxdt\nn\\
&&\quad\leq  2D+\bar{J}(T)
\end{eqnarray}
where we use \eqref{JbarT}, \eqref{defD} and \eqref{xt_co} in the last step. The constant $\bar J(T)$ is defined in \eqref{JbarT}.
Here $\tilde\Sigma$ denotes the characteristic triangle in the $(x,t)$ plane transformed from $\Sigma$.


 It then follows from $ p_Y=B pq$ that
\[
p(X,Y)\leq e^{\sup{|B|} \, \int_{\phi(X)}^Y q(X,Y')\,dY'}<e^{\sup{|B|}\, (2D+\bar{J}(T))},
\]
where $\sup{|B|}$ also depending on $\bar{J}(T)$.

The other bounds in \eqref{bdspq} can be found similarly.
\end{proof}

\subsubsection{Global existence for the wave equation\label{Sec_wave}}
Finally,  we show that $\theta(x,t)$ is a weak solution for the wave equation \eqref{192} in the $(x,t)$-plane for $0\leq t\leq T$, i.e. on $\Omega_T$.

We can prove the local H\"older continuity of $\theta$ in both $x$ and $t$ with exponent $1/2$ by showing that the integrals of $(\theta_t+c(\theta)\theta_x)^2$ and $(\theta_t-c(\theta)\theta_x)^2$  along forward and backward characteristics, respectively, are bounded. 
Then using the Sobolev embedding from $H^1$ to $C^{1/2}$. Note we also use the property on wave speed $c$ in \eqref{condc}. This also shows that all characteristic curves are $C^1$ with H\"older continuous derivative. 
Finally,  
one can show that
\beq\label{weak2.0}
\iint_{\Omega_0} \left(\theta_t \phi_t-(c(\theta)\phi)_x c(\theta)\theta_x -\theta_t \phi-J\phi\right)\,dxdt=0
\eeq
for any test function $\phi$ on $\Omega^0$. Applying this on the overlap domain  $\Omega^a\cap \Omega^0$ or $\Omega^b\cap \Omega^0$, and by the uniqueness of $C^1$ solution on  $\Omega^a$ and $\Omega^b$, we can  glue solutions found on three regions $\Omega^a$, $\Omega^b$ and $\Omega^0$ to get a solution $\theta(x,t)$ for \eqref{192}, when $t\in[0,T]$, in the weak sense of
\beq\label{weak2}
\int_0^{T}\int_{\mathbb R} \left(\theta_t \phi_t-(c(\theta)\phi)_x c(\theta)\theta_x -\theta_t \phi-J\phi\right)\,dxdt=0
\eeq
for $\phi\in C_0^\infty\{(x,t)\in\mathbb R\times [0,T]\}$.
Since the proofs of these results are very similar to those in \cite{BZ}, we   refer the readers to \cite{BZ} for details.

Finally, for this part,  we will prove that
\beq\label{6.20}
E(t)=\int_{-\infty}^\infty (\theta_t^2+c^2(\theta)\theta_x^2)(x,t)dx<C_E
\eeq
for a constant $C_E$ depending on $E(0)$ and $J$, where $E(t)$ is first given in \eqref{E_def}.

For any $\tau\geq0$, let $\Gamma_\tau\subset\Omega^+$ be the transformation of the horizontal line  $t=\tau$ in the $(x,t)$-plane. 
\begin{figure}[htp] \centering
		\includegraphics[width=.35\textwidth]{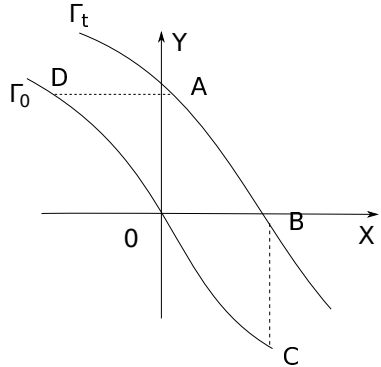}
	\caption{The domain $D_t$ is enclosed by four curves with vertices $A,B,C,D$.}\label{fig:fig3}
	\end{figure}
We first consider the bounded domain $D_{t}$ in the $(X,Y)$-plane in Figure \ref{fig:fig3},
where $D_{t}$ is enclosed by two curves $AB$ and $CD$ with
\beq\label{last1}
AB\subset\Gamma_t\cap\partial D_t,\quad CD\subset\Gamma_0\cap\partial D_t\eeq
and two straight line-segments $BC$ and $DA$. Here we denote four vertices in $(x,t)$ coordinates as 
\beq\label{last2}
A=(a,t),\quad B=(b,t),\quad C=(c,0),\quad D=(d,0)\eeq
for some $a<b$ and $d<c$, respectively. 

By Green's theorem, 
\beq
\label{eqn3.2}
\begin{split}
&\int_{\partial D_t}\frac{1-\cos w}{4}p\,dX-\frac{1-\cos z}{4}q\,dY\\
 &\qquad =-\frac{1}{4}\iint_{D_t}\big[((1-\cos z)q)_X+((1-\cos w)p)_Y\big]\,dXdY.
\end{split}
\eeq
Direct computation from \eqref{semilinear_new} gives
\beq\label{pplusq}
\begin{split}
&((1-\cos z)q)_X+((1-\cos w)p)_Y\\
&\quad =-\frac{pq}{c}(\sin \frac{w}{2}\cos \frac{z}{2}+\sin \frac{z}{2}\cos \frac{w}{2})^2-\frac{pq}{c}J (\sin z\cos^2\frac{w}{2}+\sin w\cos^2\frac{z}{2}).
\end{split}
\eeq
Substituting it into \eqref{eqn3.2}, and using the transformation relation in \eqref{eqnxt}, the following inequality holds, where curves $AB$ and $DC$ have starting points $A$, $D$ and ending points $B$, $C$, respectively.
\beq\label{eqn3.3}
\begin{split}
&\int_a^b (\theta_t^2+c^2(\theta)\theta_x^2)(x,t)\,dx\\
&\quad =\int_{AB\cap (\cos w\neq -1)}\frac{1-\cos w}{4}p\,dX
+\int_{BA\cap (\cos z\neq -1)}\frac{1-\cos z}{4}q\,dY\\
&\quad \leq\int_{AB}\frac{1-\cos w}{4}p\,dX-\frac{1-\cos z}{4}q\,dY\\
&\quad =\int_{DC}\frac{1-\cos w}{4}p\,dX-\frac{1-\cos z}{4}q\,dY-\int_{DA}\frac{1-\cos w}{4}p\,dX\\ 
&\qquad-\int_{CB}\frac{1-\cos z}{4}q\,dY -\frac{1}{4}\iint_{D_t}\frac{pq}{c}(\sin \frac{w}{2}\cos \frac{z}{2}+\sin \frac{z}{2}\cos \frac{w}{2})^2\,dXdY\\
&\qquad -\frac{1}{4}\iint_{D_t}\frac{pq}{c}J (\sin z\cos^2\frac{w}{2}+\sin w\cos^2\frac{z}{2})\,dXdY\\
&\quad \leq  \int_{DC}\frac{1-\cos w}{4}p\,dX-\frac{1-\cos z}{4}q\,dY-2\iint_{\mathcal{D}}\theta^2_t\,dxdt-2\iint_{\mathcal{D}}J\theta_t\,dxdt\\
&\quad=\int_d^c (\theta_t^2+c^2(\theta)\theta_x^2)(x,0)\,dx-2\iint_{\mathcal{D}}\theta^2_t\,dxdt-2\iint_{\mathcal{D}}J\theta_t\,dxdt,
\end{split}
\eeq
where we have used the following fact in the second last step: 
\beq\notag
\left|\frac{\partial(X,Y)}{\partial(x,t)}\right|
=\left|
\begin{array}{cc}
\displaystyle X_x& \displaystyle X_t\\
\displaystyle Y_x& \displaystyle Y_t
\end{array}
\right|=-2cX_xY_x=\frac{8}{pq}\frac{1}{1+\cos w}\frac{1}{1+\cos z},
\eeq
and $\mathcal{D}$ is the region in the $(x,t)$-plane transformed from $D_t$. The last equal sign holds since there exists no energy concentration initially using $\theta_0(x)$ is absolutely continuous.
\medskip

If we let $(a, b)$ goes to $(-\infty,\infty)$, then by  \eqref{eqn3.3} we have, for any $0\leq t
\leq T$,
\[
E(t)\leq E(0)+4  \int_0^t\int_{-\infty}^\infty|J||\theta_t|\,dxdt,
\]
and hence, 
\[
\frac{1}{2}\max_{0\leq t
\leq T}E(t)\leq E(0)+4C_\ve\int_0^T\int_{-\infty}^\infty|J|^2\,dxdt
\]
for some constant $C_\ve$ and $C$. This gives \eqref{6.20}, and implies that $\theta_t(\cdot, t)$ and $\theta_x(\cdot, t)$ are both square integrable functions in $x$, so do $R$ and $S$.

%

In summarize, we have 

\begin{lemma}\label{lemma6.2}
 Fix any $T>0$ and any function $J(x,t)$ satisfying \eqref{JbarT} over $\Omega_T$.
If the same  assumptions on initial conditions in Theorem \ref{thmplc1} hold, then 
there exists a weak solution of \eqref{192} over $\Omega_T$ with bounded energy $E(t)<C_E$ for some $C_E$ depending on $E(0)$ and $J$.
\end{lemma}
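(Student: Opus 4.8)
The plan is to assemble the pieces already developed in Sections~\ref{SLS} and~\ref{sec6.1} into one statement; the only genuinely new ingredient is the passage from the local solution near $\Gamma_0$ to a solution on all of $\Omega^0$, which rests on the a priori bounds of Lemma~\ref{lemma3.3}.

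First I would use finite speed of propagation to split $\Omega_T$ into three regions. Because $J(\cdot,0)=u_0'+\theta_1$ and the data $\theta_1,\theta_0'$ decay at infinity, one can choose $x_a,x_b$ with $|x_a|,x_b$ large so that $|J(x,t)|<\varepsilon$ for $x<x_a$ or $x>x_b$, $0\le t\le T$; the Riccati-type equations \eqref{S_sing}--\eqref{R_sing} then yield global $C^1$ control of $R,S$ on the domains of dependence $\Omega^a,\Omega^b$, and classical theory gives unique $C^1$ solutions there. It remains to solve \eqref{192} on the characteristic triangle $\Omega^0$, which together with $\Omega^a,\Omega^b$ covers $\Omega_T$. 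On $\Omega^0$ I pass to the energy-dependent characteristic coordinates $(X,Y)$ of \eqref{charaXY}, turning \eqref{192} into the semilinear system \eqref{semilinear_new} with boundary data on $\Gamma_0$ given by \eqref{defwz}--\eqref{defpq} (with $\bar p=\bar q=1$). Local existence in the strip $\hat\Omega_\delta$ comes from the Schauder fixed point theorem applied to the integral operator $\mathcal T_1$ on the compact set $\mathcal K_1$ of \eqref{defK1}: $\mathcal T_1$ is continuous in $C^0$ and, for $\delta$ small, maps $\mathcal K_1$ into itself, using only that $J$ is H\"older and that $(x_p,t_p),(x_m,t_m)$ are Lipschitz along characteristics. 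The inverse transformation of Section~\ref{Sec_inv} produces $\theta,R,S$ on the corresponding region in $(x,t)$.

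The crux of the argument, and the step I expect to be the main obstacle, is the \emph{global} control of the dilation variables $p,q$, since the semilinear system does not close on them. This is precisely Lemma~\ref{lemma3.3}: integrating the identity for $q_X+p_Y$ over a characteristic triangle $\Sigma$, using $p=q=1$ on $\Gamma_0$, completing the square as $(\tan\frac w2+\tan\frac z2)^2$, and---crucially---rewriting the $J$-contribution as $\iint_\Sigma \frac{pq}{2c}|J|^2\cos^2\frac w2\cos^2\frac z2\,dX\,dY=\iint_{\tilde\Sigma}|J|^2\,dx\,dt$ via the Jacobian \eqref{xt_co}, one bounds $\int p\,d\bar X+\int q\,d\bar Y$ by $2D+\bar J(T)$. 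A Gr\"onwall argument in $p_Y=Bpq$ and $q_X=Apq$ then gives the uniform upper bound $A_2$ on $\Omega^0$, and an analogous lower bound $A_1>0$. With $\|p\|,\|q\|$---hence $\|V\|_{C^\alpha\cap L^\infty(\Omega^0)}$---controlled a priori, the constant $K_1$ and therefore $\delta$ can be fixed independently of the base point, so the local construction is iterated finitely many times to cover $\Omega^0$, and transforming back yields $\theta$ on $\Omega_T$.

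Finally I would verify the weak formulation and the energy bound. Continuity of the construction together with the Sobolev embedding $H^1\hookrightarrow C^{1/2}$ (valid once $E(t)<\infty$) give that the characteristics are $C^1$ and $\theta$ is locally $C^{1/2}$; the weak identity \eqref{weak2.0} on $\Omega^0$, matched against the unique $C^1$ solutions on $\Omega^a,\Omega^b$ over the overlaps $\Omega^a\cap\Omega^0$, $\Omega^b\cap\Omega^0$, glues to \eqref{weak2}. For the energy, Green's theorem over the domain $D_t$ in the $(X,Y)$-plane combined with the pointwise identity \eqref{pplusq} gives \eqref{eqn3.3}; discarding the negative dissipation term, letting $(a,b)\to(-\infty,\infty)$, and applying Young's inequality $4\iint|J||\theta_t|\le 2\iint\theta_t^2+2\iint|J|^2$ to absorb the $\theta_t^2$ term, one obtains $E(t)\le E(0)+C\|J\|_{L^2(\Omega_T)}^2=:C_E$, which is \eqref{6.20}. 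This establishes the lemma.
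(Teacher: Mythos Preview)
Your proposal is correct and essentially identical to the paper's argument. You correctly identify the structure (far-field splitting, local Schauder existence for the semilinear system in $(X,Y)$, global extension via the $p,q$ bounds of Lemma~\ref{lemma3.3}, inverse transformation, gluing of the weak formulation, and the energy bound via \eqref{eqn3.3} with Young's inequality), and you single out the right crux---the $L^1$ control of $p,q$ along characteristics obtained by completing the square and using the Jacobian identity \eqref{xt_co} to convert the $J$-contribution into $\iint_{\tilde\Sigma}|J|^2\,dx\,dt$.
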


\subsection{A map from $J(x,t)$ to $J(X,Y)$}
Using similar method as in  Lemmas 1 and 2 in \cite{BCZ} for variational wave equation (i.e. \eqref{192} with $J\equiv 0$), 
we can show that the uniqueness of forward and backward characteristics for equation \eqref{192}, i.e. the uniqueness of the $(X,Y)$ coordinates. Hence  $(x_m(X,Y),t_m(X,Y))=(x_p(X,Y),t_p(X,Y))$ is unique. As a consequence, for any given $J(x,t)\in C^\alpha\cap L^2\cap L^\infty$, solution we found previously satisfies the semilinear system \eqref{semilinear_new} with a unique source term $\tilde J(X,Y)= J(x(X,Y),t(X,Y))$. 

Clearly $\tilde J(X,Y)$ is $L^\infty$. In fact, for any given $\tilde J(X,Y)\in L^\infty$, the semilinear system \eqref{semilinear_new} has a unique solution obtained as the fixed point of a contract mapping in a weighted $L^\infty$ space, in the same way as the proof in \cite{BC}. 
As a consequence, we know that  the solution $(x,t,\theta, w, z, p,q)(X,Y)$ of  the semilinear system \eqref{semilinear_new} is continuously dependent on $J$ in the $L^\infty$ space, due to the uniform bounds on $p$ and $q$ and the relation \eqref{eqnxt}. 
The proof is straightforward and  we omit the detail.
%

\begin{remark}
The main idea in \cite{BCZ} for the variational wave equation is to find some  Lipschitz weighted distance between any two characteristics. This distance essentially measures the forward and backward energy between two characteristics. The Lipschitz property protects the uniqueness of characteristics. By adding $J(x,t)\in C^\alpha\cap L^2\cap L^\infty$, there will be some  new lower order terms $-2SJ$ and $-2RJ$ in the energy balance laws \eqref{A8}. This will not cause any problem since $S,R,J$ are all in $L^2$. The appearance of damping term never makes trouble here. Since this uniqueness result can be shown in a very similar way as in \cite{BCZ}, we omit the proof here.
\end{remark}
\begin{remark}
The $(X,Y)$-coordinates system is ideal for the wave equation but it is not the case for the heat equation. The original $(x,t)$-coordinates system works better for the heat equation. 
So we run our proof mainly in the $(x,t)$-coordinates system and use the continuous dependence of solution on $J$ in the  $(X,Y)$-coordinates system.

From $J(x,t)$ to $\tilde J(X,Y)$, there is actually an unfolding process when singularity forms, i.e. when characteristics tangentially touch each other.
\end{remark}
%

\subsection{A fixed point argument for a map on $J$\label{sec_6.2}}
Recall that, for any given function $J(x,t)\in (C^\alpha\cap L^\infty\cap L^2)(\Omega_T)$, there exists a weak solution  $\theta(x,t)=\theta^J(x,t)$ of \eqref{192}.

Using relations \eqref{hatvt} and \eqref{hatu}, we define a function $\mathcal M(J)$ on $\Omega_T$ as follows
\beq\label{hatvtL}
\begin{split}
\mathcal M(J)(x,t)
=&\int_{\R}H(x-y,t)\left(u_0'(y)+\theta_1(y)\right)\,dy\\
&+\int_0^{t}\int_{\mathbb R}H(x-y,t-s)\big[2\theta_s+c'(\theta)c(\theta)(\theta_y)^2\big](y,s)\,dyds\\
&-\int_0^{t}\int_{\mathbb R}H_y(x-y,t-s)\big[c^2(\theta)\theta_y-u\big](y,s)\,dyds
\end{split}
\eeq
for $t\in[0,T]$ and
\begin{align}\label{hatuL}
\begin{split}
u(x,t) =&\int_{\R}H(x-y,t)u_0(y)\,dy+\int_0^{t}\int_{\R}H_x(x-y,t-s)\theta_{s}(y,s)\,dyds.
\end{split}
\end{align}
This gives a map 
\beq\label{Lmap}
\mathcal T:\ J(x,t)\rightarrow \mathcal M(J)(x,t)
\eeq
on $C^{\alpha}\cap L^\infty\cap L^2(\Omega_T)$.

Initially, we define a set $\mathcal{K}$ as follows: for some constants $\delta$ and $K$,
\beq\label{defK}
\mathcal{K}=\left\{J(x,t)\,\big |\,\|J(x,t)-J^0(x,t)\|_{C^{\alpha}\cap L^\infty\cap L^2(\Omega_\delta)}\leq K, \ J(x,0)=\theta_1(x)+u'_0(x)
\right\}
\eeq
where 
\beq\notag
\begin{split}
J^0(x,t)=\int_{\R}H(x-y,t)\left(u_0'(y)+\theta_1(y)\right)\,dy
\end{split}
\eeq
 and
\[
\Omega_\delta=\{(x,t)\,|\,x\in\R,\;t\in[0,\delta]\}.
\]


{ 
The goal in this subsection is first to show, for a given large $K$, if $\delta$ is sufficiently small, then $\mathcal T$ has a fixed point on $\mathcal{K}$. 
In general, $\delta$ would depend  on $K$. Later,
after proving the energy estimate in Theorem \ref{lemma3.2}, we will establish  a uniform bound on ${\mathcal E}(t)$ and we will be able to choose $K$ as the a priori bound on $\|J(x,t)-J^0(x,t)\|_{C^{\alpha}\cap L^\infty\cap L^2(\Omega_\delta)}$, which  depends only on the initial data. As a consequence, $\delta>0$ can be fixed in terms of the initial data. Hence, one can extend the existence to $\Omega_T$ in finite step.}
\medskip

The existence of a fixed point for small $\delta$ will be accomplished in several steps. 
\smallskip

\paragraph{\bf (i).} { As before, we consider the far field separately, where the solution of forced wave equation is smooth due to finite propagation.

At time $t=0$, $J(x,0)=\theta_1(x)+u'_0(x)$ for any function $J$ in $\mathcal K$. Furthermore, all functions in $\mathcal K$
have uniform $C^\alpha$, $L^\infty$ and $L^2$ bounds. So by a very similar argument as in the beginning of Section \ref{sec6.1}, we can find a domain $\Omega^0$
as in Figure \ref{fig_last}, such that for any function $J$, the corresponding $\theta^J$, solved by the forced wave equation, has no singularity in the region $\Omega_T/\Omega^0$. Note that the solution $\theta^J$ solved by the forced wave equation is finitely propagating.

The choose of $\Omega^0$ depends on both $T$ and $K$. }
\smallskip

\paragraph{\bf (ii). }
To apply the Schauder Fixed-Point Theorem, we use the following Banach space $L^*$, which includes all functions $f$ in $\bigcap\limits_{p\in[2+\sigma,\infty)} L^p$ with a finite norm
\[
\|f\|_{L^*(\Omega_\delta)}=\sup\limits_{p\in[2+\sigma,\infty)}\|f\|_{L^p(\Omega_\delta)}< \infty.
\]

Here $\sigma$ is any given positive constant. It is easy to check that $L^*$ is a Banach space. Clearly, $\mathcal K\subset L^*$ and is bounded, since for any $f\in \mathcal K$,
\beq\label{holder}
\|f\|_{L^p}\leq \max(\|f\|_{L^2}, \|f\|_{L^\infty})<K, \quad p\in[2+\sigma,\infty).
\eeq
This can be easily proved by the H\"older inequality.

\medskip
\noindent {\em Claim: } $\mathcal K$ is a compact set in $L^*$.
 \medskip

We divide the proof of the claim into two steps. 

\smallskip
\noindent{\em Step 1: } We use Frechet-Kolmogorov theorem (\cite{HB,OH}) to prove that  
$\mathcal{K}$ is a compact subset of $L^p(\Omega_\delta)$ for any $p\in[2+\sigma,\infty)$. 
\smallskip

We can verify two conditions in Frechet-Kolmogorov theorem as following. For any $p\in[2+\sigma,\infty)$ and $f\in \mathcal K$,
\[
\begin{split}
&\Big(\int_{0}^\delta\int_{\R}|f(x+a, t+b)-f(x,t)|^p\, dxdt\Big)^{\frac{1}{p}}\\
&\leq
\|f(x+a, t+b)-f(x,t)\|_{L^\infty}^l(\int_{0}^\delta\int_{\R}|f(x+a, t+b)-f(x,t)|^{p(1-l)}\, dxdt)^{\frac{1}{p}} 
\end{split}
\]
where $p(1-l)>2$. Because $f$ has a uniformly bounded $C^{\alpha}\cap L^\infty\cap L^2$ norm, $\|f(x+a)-f(a)\|_{L^p(\Omega_\delta)}$ uniformly approaches zero as $a\rightarrow 0$ in $\mathcal K$.

By the same argument as in paragraphs {\bf (i)}, for any $\epsilon$, there exists a constant $r_0>0$ such that, if $r>r_0$, then $J(x,t)$
is less than $\epsilon$ for any $x>r$, $0\leq t\leq \delta$. So
\[
\int_{0}^\delta\int_{|x|>r}|f|^p\, dxdt\leq
\epsilon^{pl}\, \int_{0}^\delta\int_{\R}|f|^{p(1-l)}\, dxdt\leq \epsilon^{pl}(\max\{\|f\|_{L^2},\|f\|_{L^\infty}\})^{p(1-l)},
\]
with $p(1-l)>2$. This shows the uniform convergence of $\int_{0}^\delta\int_{|x|>r}|f|^p\, dxdt$ as $r$ goes to $\infty$.

It follows from Frechet-Kolmogorov theorem that
$\mathcal{K}$ is a compact subset in $L^p(\Omega_\delta)$ with $p\in[2+\sigma,\infty)$. 

\smallskip
\noindent{\em Step 2: } We show that $\mathcal{K}$  is a compact subset in $L^*$.
\smallskip

In fact, for any bounded set in $\mathcal{K}$, we can find a convergent sequence $f_n^1$ in $L^{2+\sigma}$. Then in $f_n^1$, we can find a convergent sub-sequence $f_n^2$ converging in $L^3$, then until any $L^n$. Now we claim $f_n^n$ converges in $L^*$. Clearly, all sub-sequences converge to the same function $f^*$ in a.e. sense. Furthermore, since $f_n^n\in \mathcal{K}$, one has 
$\|f_n^n\|_{L^p}<K$ for $p\in [2+\sigma,\infty)$ by \eqref{holder}. 
Therefore, $\|f^*\|_{L^p}<K$ for $p\in [2+\sigma,\infty)$, and hence,  $f^*\in L^*$ with $\|f^*\|_{L^*}<K$. The compactness proof is done, which completes the proof of the claim.

\medskip
\paragraph{\bf (iii).}

To use the Schauder Fixed-Point Theorem for $\mathcal T$ on $\mathcal{K}$, we  have to prove: 
\begin{itemize}
\item[(a).]
The map $\mathcal T$ maps from $\mathcal{K}$ to itself, for a small time interval $(0,\delta)$. 
\item[(b).] The map $\mathcal T$  is continuous under the $L^*$ norm.
\end{itemize}

\smallskip
\noindent{\em Proof of (a).}
First we show that $\|\mathcal M(J)(X,Y)\|_ {L^\infty\cap L^2(\Omega_\delta)}<K$. In fact, it is easy to show that 
the super norm of $u$ defined in \eqref{hatuL} is bounded. Then by the $L^2$ bound of $\theta_t$ in \eqref{6.20} and using a similar argument as in Lemma \ref{lemma3.1}, { we can show that 
\[\|\mathcal M(J)(x,t)-J^0(x,t)\|_ {L^\infty(\Omega_\delta)}<{C\cdot \delta^{\frac{1}{4}}}<K\] 
for $\delta$ small enough, where $C$ is a constant.}

Now we treat the $L^2$ norm of $\mathcal M(J)$.
By \eqref{hatuL}, it is easy to see that $u(x,t)$ is a weak solution of
\beq\label{key1}
u_{t}-u_{x x}=\theta_{t x}.
\eeq
Similarly, by \eqref{hatvtL}, $\mathcal M(J)(x,t)$ is a weak solution of
\beq\label{key2}
{\mathcal M}_{t}-{\mathcal M}_{x x}=c(\theta)(c(\theta)\theta_{x})_{x}
 -u_{x}-2\theta_{t}
\eeq
where the calculation is very similar to \eqref{Hdef}-\eqref{kexp}. 
Then by same arguments as in Lemma \ref{lemma3.1} and using \eqref{6.20}, { we know
\[\|\mathcal M(J)(x,t)-J^0(x,t)\|_ {L^2(\Omega_\delta)}<{C\cdot \delta^{\frac{1}{4}}}<K\] 
if $\delta$ is small}. Here, as before, we get use of the $t^{1/4}$ power in Lemma \ref{lemma3.1}.

{ Finally, if $\delta$ is small, the estimate 
\[
\|\mathcal M(J)(x,t)-J^0(x,t)\|_ {C^\alpha(\Omega_\delta)}<{C\cdot \delta^{\frac{1}{4}-\alpha}}<K
\] 
when  $\delta$ is small enough,
can be derived directly from Lemma \ref{lem2} in Appendix \ref{secA3}. }

\smallskip
\noindent{\em Proof of (b).}
First recall  $(\theta, z, w, p, q, x,t)(X,Y)$ is Lipschitz continuously dependent on $\tilde J(X,Y)$ in the $L^\infty$ distance, as discussed in the previous section.

Note, all integrals in $\mathcal M$ can be written as ``nice'' equations using $\theta, z, w, p, q, x,t$ and \eqref{xt_co}. For example,
\[
2\iint H(x-y,t-s)\theta_s(y,s)dyds=
\iint \tilde H\,\frac{pq}{4c}(\sin{w}\cos^2\frac{z}{2}-\sin{z}\cos^2\frac{w}{2})\,dXdY,
\]
where 
$$
\tilde H=H(x-y(X,Y),t-s(X,Y)).
$$
So the continuity of $\mathcal M$ in super norm can be proved by the  Lipschitz continuous dependence of $(\theta, z, w, p, q, x,t)(X,Y)$  on $J$ in the $L^\infty$ distance.

Clearly, the $L^\infty$ and $L^*$ norm are different in the whole domain, but they are the same in any bounded domain.
Thanks to the finite propagation property for \eqref{192}, we can split the region into $\Omega^0$ and the rest, where for any $J(x,t)\in \mathcal K$, singularity only happens on $\Omega^0$. The continuity of $\mathcal M(J)$ on $J$ is clear for smooth solutions on $\Omega/\Omega^0$. One can use both $L^2$ and $L^\infty$ norms estimate to cover the $L^*$ space.
In $\Omega^0$, we know that 
\[
\|J_2-J_1\|_{L^\infty}=\lim_{p\rightarrow \infty}\|J_2-J_1\|_{L^p}\leq \|J_2-J_1\|_{L^*}.
\]
Hence, we proved (b).
 
\bigskip
Therefore, by (a) and (b), an application of the Schauder Fixed-Point Theorem gives a fixed point $J^*(x,t)\in \mathcal K$ of $\mathcal T$, that is,
\beq\label{fix}
\mathcal M(J^*)(x,t)=J^*(x,t).
\eeq


\bigskip
Finally, let's fix $J=J^*$ in \eqref{fix}. Then  for any $0\leq t\leq \delta$, by previous results, we know that,
 \beq\label{weak0}
\theta_t(\cdot,t),\ \theta_x(\cdot,t)\in L^2(\mathbb R)\;\mbox{ and }\;  u,\, J\in L^\infty\cap L^2(\mathbb R\times [0,\delta]),\qquad
\eeq
and by applying standard heat equation theory to \eqref{key1},
\[
u\in L^2([0,\delta],H^1(\R)),\quad
u_t\in L^2([0,\delta],H^{-1}(\R)).
\]
Furthermore, from
\[
v=\int_{-\infty}^x u\,dz,
\]
one has 
\[v_t-v_{xx}=\theta_t,\]
in the $L^2$ sense by \eqref{key1}, which means for any test function $\phi\in C_0^\infty(\mathbb R\times [0,\delta])$,
\beq\label{weak 4}
\iint v_t\phi\,dx dt= \iint (v_{xx}+\theta_t)\phi\,dx dt
=\iint (u_x+\theta_t)\phi\,dx dt,
\eeq
where this and also the following integrals are all on $\mathbb R\times [0,\delta]$.
By \eqref{weak2}, we also know that
\beq\label{weak 3}
\iint \theta_t \phi_t dxdt=\iint \left((c(\theta)\phi)_x c(\theta)\theta_x +\theta_t \phi+J\phi\right)\,dxdt.
\eeq

Now we are ready to show that 
\[
J=v_t, \quad a.e..
\]
First notice that for any test function $\phi\in C_0^\infty(\mathbb R\times [0,\delta])$, there exist a function $\psi\in C_0^\infty(\mathbb R\times [0,\delta])$ such that 
\beq\label{psiphi}
\psi_t+\psi_{xx}+\psi=\phi.
\eeq
In fact, $\psi$ can be found by first solving (\ref{psiphi}) with an initial boundary value problem with zero boundary conditions on some bounded interval containing the support of $\phi$, then doing an zero extension.
By \eqref{weak 4} and \eqref{weak 3}, 
\[
\iint v_t\psi_t\,dx dt= \iint (v_{xx}+\theta_t)\psi_t\,dx dt
\]
and
\[
\iint \theta_t \psi_t dxdt=\iint \left((c(\theta)\psi)_x c(\theta)\theta_x +\theta_t \psi+J\psi\right)\,dxdt.
\]
Add above two equations up to get
 \[
\iint v_t(\psi_t+\psi_{xx})\,dx dt=\iint \left((c(\theta)\psi)_x c(\theta)\theta_x +\theta_t \psi+J\psi\right)\,dxdt.
\]
Furthermore, by \eqref{key2} we mean that
\[
\iint J(\psi_t+\psi_{xx})\,dxdt=\iint \left((c(\theta)\psi)_x c(\theta)\theta_x +u_x\psi+2\theta_t \psi\right)\,dxdt.
\]
Now comparing the above two equations, and using \eqref{weak 4} and \eqref{psiphi}, we have
\beq\label{weak 8}
\iint (J-v_t)\phi\, dxdt=0,
\eeq
which shows that $J=v_t$, a.e., and 
\beq\label{weak 6}
\iint J\phi\,dxdt =\iint (u_x+\theta_t)\phi\,dxdt,
\eeq
The equation \eqref{weak1} is satisfied due to \eqref{weak 3} and \eqref{weak 8}. 

Finally, it follows from  \eqref{weak 6}, \eqref{weak0} and \eqref{6.20}  that 
\[
 u_x(x,t)\in L^{\infty}([0,\delta], L^2_{loc}(\mathbb R)).
\]
The H\"older continuous properties of $u$ and $\theta$ in Theorem \ref{thmplc1} can be easily shown by the Sobolev embedding from $H^1_{loc}$ to $C^{1/2}$ in one space dimension.


\subsection{Energy Estimate\label{sec7}}
We now prove the energy estimate for the weak solution established in Section \ref{sec_6.2}, which provides the energy estimate in  Theorem \ref{thmplc1} and allows an extension of the solution to $[0,T]$.

\begin{thm}\label{lemma3.2} Fix $T>0$. For any weak solution of system \eqref{simeqn0} constructed in Section \ref{sec_6.2}, one has, for $t\in[0,T]$, 
\beq\label{engineq2}
\mathcal{E}(t)\leq \mathcal{E}(0) -\iint_{\R\times [0,t]}(v_t^2+\theta^2_t)\,dxdt.
\eeq
\end{thm}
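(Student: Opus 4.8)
The plan is to derive \eqref{engineq2} by combining two separate energy balances --- one for the ``wave'' part $(\theta_t,c(\theta)\theta_x)$ and one for the ``heat'' part $u$ --- and observing that the cross terms cancel thanks to the identity $J=v_t=u_x+\theta_t$ established in Section \ref{sec_6.2}. For the wave part, the essential computation is already in Section \ref{Sec_wave}: I would pass to the limit $(a,b)\to(-\infty,\infty)$ in \eqref{eqn3.3}, keeping \emph{both} dissipative terms rather than only $\iint|J||\theta_t|$, and use the decay at infinity from \eqref{tecinitial} together with finite speed of propagation (so the far-field regions $\Omega^a,\Omega^b$ contribute no energy concentration). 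This yields, on the interval of existence,
\[
\int_{\R}\big(\theta_t^2+c^2(\theta)\theta_x^2\big)(x,t)\,dx\le \int_{\R}\big(\theta_t^2+c^2(\theta)\theta_x^2\big)(x,0)\,dx-2\iint_{\R\times[0,t]}\theta_t^2\,dxds-2\iint_{\R\times[0,t]}J\theta_t\,dxds,
\]
the inequality absorbing the energy lost where the map $(X,Y)\mapsto(x,t)$ degenerates at cusp formation.

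For the heat part, I would use that $u$ solves $u_t-u_{xx}=\theta_{tx}$ in $L^2([0,t],H^{-1}(\R))$ with $u\in L^2([0,t],H^1(\R))\cap C([0,t],L^2(\R))$ (which follows from the representation \eqref{hatuL} and standard parabolic theory, as in Section \ref{sec_6.2}). The Lions--Magenes energy identity, pairing with $u$ and using $\langle\theta_{tx},u\rangle_{H^{-1},H^1}=-\int_{\R}\theta_t u_x\,dx$ (valid since $\theta_t(\cdot,s)\in L^2$), gives
\[
\tfrac12\|u(\cdot,t)\|_{L^2}^2+\iint_{\R\times[0,t]}u_x^2\,dxds=\tfrac12\|u_0\|_{L^2}^2-\iint_{\R\times[0,t]}u_x\theta_t\,dxds.
\]
Substituting $u_x=J-\theta_t$ and expanding, the $\iint\theta_t^2$ contributions cancel, leaving $\tfrac12\|u(\cdot,t)\|_{L^2}^2=\tfrac12\|u_0\|_{L^2}^2-\iint_{\R\times[0,t]}J^2\,dxds+\iint_{\R\times[0,t]}J\theta_t\,dxds$. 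Multiplying the wave inequality by $\tfrac12$ and adding this identity, the $\pm\iint J\theta_t$ terms cancel; recalling $\mathcal E(t)=\tfrac12\int_{\R}(\theta_t^2+c^2\theta_x^2+u^2)\,dx$ and $J=v_t$, one arrives at \eqref{engineq2}. Since the right-hand side is bounded by $\mathcal E(0)$, this furnishes a uniform bound on $\mathcal E(t)$ independent of the small time step $\delta$ from Section \ref{sec_6.2}; fed back into the a priori constant $K$ there and into Lemma \ref{lemma3.3}, it makes $\delta$ depend only on the initial data, so that the solution and the estimate extend in finitely many steps to all of $[0,T]$.

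The main obstacle I anticipate is the bookkeeping of the cross term: one must check that the $\iint J\theta_t$ produced by the $(X,Y)$-coordinate Green's-theorem argument behind \eqref{eqn3.3} coincides exactly with the $\iint u_x\theta_t$ obtained by integrating by parts in $(x,t)$, i.e.\ that there is no mismatch or hidden concentration between the two coordinate systems at cusp formation --- this is where the change-of-variables Jacobian \eqref{xt_co} and the already-proven $L^2$ bounds \eqref{6.20} on $\theta_t,\theta_x$ (hence on $R,S$) must be invoked carefully. A secondary point requiring care is establishing $u\in C([0,T],L^2(\R))$ and the legitimacy of the Lions--Magenes identity for the low-regularity forcing $\theta_{tx}\in L^2([0,T],H^{-1})$, which is routine given the uniform $L^2$ control of $\theta_t(\cdot,s)$.
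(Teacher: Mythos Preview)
Your proposal is correct and arrives at exactly \eqref{engineq2}, but the route differs from the paper's. The paper does not introduce a separate Lions--Magenes energy identity for $u$; instead it stays on the characteristic quadrilateral $\mathcal D$ and manipulates the cross term directly via $v_t=v_{xx}+\theta_t$: writing $\iint_{\mathcal D}v_t\theta_t=\iint_{\mathcal D}(v_t^2-v_{xx}v_t)$ and integrating $-\iint_{\mathcal D}v_{xx}v_t$ by parts in $x$ produces $\tfrac12\int_a^b u^2(t)-\tfrac12\int_d^c u^2(0)$ plus boundary integrals $\int_0^t (v_tv_x)(x^\pm(t),t)\,dt$ along the characteristic sides $DA,CB$, which vanish as $(a,b)\to(-\infty,\infty)$ because $v_t$ is bounded and $v_x=u\in L^2$. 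Substituting this back into \eqref{eqn3.3} yields \eqref{engineq2} in one stroke.

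Your decomposition has the advantage of invoking standard parabolic machinery for the $u$-equation on the full strip, avoiding the characteristic boundary terms altogether; the paper's approach has the advantage of keeping both contributions on the same finite domain $\mathcal D$ before passing to the limit, so the ``bookkeeping'' worry you raise about matching the $(X,Y)$-side integral $\iint J\theta_t$ with the $(x,t)$-side $\iint u_x\theta_t$ never arises---only the single identity $J=v_t$ a.e.\ from Section \ref{sec_6.2} is used. Your algebra (substituting $u_x=J-\theta_t$ into the heat identity and cancelling $\pm\iint J\theta_t$ against the wave inequality) is equivalent to the paper's, and your secondary concern about $u\in C([0,T],L^2)$ is indeed routine given $u\in L^2([0,T],H^1)$, $u_t\in L^2([0,T],H^{-1})$.
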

	
\begin{proof} We first consider the bounded domain $D_{t}$ in the $(X,Y)$-plane in Figure \ref{fig:fig3}, and using the corresponding notation \eqref{last1}-\eqref{last2}. Recall that $\mathcal{D}$ is the region in the $(x,t)$-plane transformed from $D_t$.

We start our proof from inequality \eqref{eqn3.3}, where $J$ in the last integral can be replaced by $v_t$, that is,
\beq\label{eEst}
\begin{split}
\int_a^b (\theta_t^2+c^2(\theta)\theta_x^2)(x,t)\,dx \leq& \int_d^c (\theta_t^2+c^2(\theta)\theta_x^2)(x,0)\,dx\\
&-2\iint_{\mathcal{D}}\theta^2_t\,dxdt-2\iint_{\mathcal{D}}v_t\theta_t\,dxdt.
\end{split}
\eeq

By \eqref{weak 6} and the discussion in Section \ref{sec_6.2}, we know 
$
J=v_t=v_{xx}+\theta_t
$
holds true in the $L^2(\mathbb R\times[0,T])$ sense. Thus,
\beq\label{eqn3.4}
\begin{split}
\iint_{\mathcal{D}}v_t\theta_t\,dxdt=\iint_{\mathcal{D}}(v_t^2-v_{xx} v_t)\,dxdt,
\end{split}
\eeq
where $v_{xx}$, $\theta_t$, $v_t\in L^2(\mathcal{D})$.
Integrating by parts, the second term becomes
\beq\label{eqn3.5}
\begin{split}
-\iint_{\mathcal{D}}v_{xx}v_t\,dxdt
=&\iint_{\mathcal{D}}v_{x}v_{xt}\,dxdt+\int_{AD}\frac{v_xv_t}{\sqrt{1+c^2}}\,ds-\int_{CB}\frac{v_xv_t}{\sqrt{1+c^2}}\,ds\\
=&\frac12\int_a^b|v_x|^2(x,t)\,dx-\frac12\int_d^c|v_x|^2(x,0)\,dx\\
&-\int_{0}^t(v_tv_x)(x^+(t),t)\,dt-\int_{0}^t (v_tv_x)(x^-(t),t)\,dt
\end{split}
\eeq
where $x^+(t)$ and $x^-(t)$ are characteristic $DA$ and $CB$ respectively.
Substitute this identity into \eqref{eqn3.4} to get 
\beq\label{eqn3.6}
\begin{split}
\iint_{\mathcal{D}}v_t\theta_t\,dxdt=&\iint_{\mathcal{D}}v_t^2\,dxdt +\frac12\int_a^b u^2(x,t)\,dx-\frac12\int_d^c u^2(x,0)\,dx\\
&-\int_{0}^t(v_tv_x)(x^+(t),t)\,dt-\int_{0}^t (v_tv_x)(x^-(t),t)\,dt.
\end{split}
\eeq
Because $v_t$ is uniformly bounded and $v_x=u\in L^2$, 
 \[-\int_{0}^t(v_tv_x)(x^+(t),t)\,dt-\int_{0}^t (v_tv_x)(x^-(t),t)\,dt\to 0\;\mbox{ as }\; (a,b)\rightarrow (-\infty, \infty).\]
 Taking $(a,b)\to (-\infty,\infty)$  (so $(d,c)\to (-\infty,\infty)$ too) in \eqref{eEst} and \eqref{eqn3.6}, we have
\beq\label{eqn3.3-2}
\begin{split}
\mathcal E(t)
\leq \mathcal E(0) -\iint_{\R\times [0,t]}(v_t^2+\theta^2_t)\,dxdt.
\end{split}
\eeq
This completes the proof.
\end{proof}

By Lemma \ref{lemma3.1}, we know that the time step $\delta$ only depends on the bound of $\mathcal{E}(t)$ in \eqref{6.20}. The previous theorem gives an a priori bound on  $\mathcal{E}(t)$, which only depends on the initial condition. By this piece of information, we know $\delta$ is uniformly positive, so one can obtain a solution on $[0,T]$ in finite many time steps. Alternatively, one can choose $K$ larger than the a priori bound on  $\|J(x,t)-J^0(x,t)\|_{C^{\alpha}\cap L^\infty\cap L^2(\Omega_T)}$ only depending on the initial data, then directly find the fixed point for $t\in[0,T]$. Here there seems to be repetition in our presentation from local to global existence. We feel this might be helpful for the readers to understand the ideas for this involved proof. This completes the proof of Theorem \ref{thmplc1}.

\begin{remark}
If the solution has no energy concentration at time $t_1$, i.e. $\cos w$ and $\cos z$ are not $-1$ or equivalently $\theta_t$
and $\theta_x$ both have no blowup at $t_1$, then for any $t_2\geq t_1$, 
\beq\label{energydecay}\mathcal E(t_2)\leq \mathcal E(t_1)-\int_{t_1}^{t_2}\int_{\R}(v_t^2+\theta^2_t)\,dxdt.
\eeq
In fact, one can still prove it using the same method in the last theorem.

However, if  solution has energy concentration at time $t_1$, \eqref{energydecay} might not be true, because some energy might be later released from concentration. In this case, one cannot get the last step of
\eqref{eqn3.3}, where
\[
\begin{split}
\int_{DC}\frac{1-\cos w}{4}p&\,dX-\frac{1-\cos z}{4}q\,dY\\
>&\int_{DC\cap(\cos w\neq -1)}\frac{1-\cos w}{4}p\,dX+ 
\int_{CD\cap(\cos z\neq -1)}\frac{1-\cos z}{4}q\,dY\\
=&\int_d^c \frac{1}{2}(\theta_t^2+c^2(\theta)\theta_x^2)(x,t_1)\,dx,
\end{split}
\]
with $DC$ on the curve $t=t_1$.
\end{remark}

\medskip
\appendix

\section{}

\subsection{Derivation of system \eqref{time_poi}\label{Sec_A1}}

We consider the following form of solutions to the system \eqref{wlce}
$$
\bu(x,t)=(0,0,u(x,t))^T\;\mbox{ and }\; 
\n(x,t)=\big(\sin\theta(x,t), 0, \cos\theta(x,t)\big)^T.
$$
It is easy to see that $\nabla\cdot \bu(x,t)=0 $, and 
$\bu\cdot \nabla \bu=\bu\cdot\nabla \n =\bu\cdot\nabla \dot{\n}=0$.
Direct computation implies
\beq\notag
D=\frac{1}{2}(\nabla \bu+\nabla^T\bu)=\frac{1}{2}
\left[
\begin{array}{ccc}
0&0&u_x\\
0&0&0\\
u_x&0&0
\end{array}
\right],
\eeq
\beq\notag
\omega=\frac{1}{2}(\nabla \bu-\nabla^T\bu)=\frac{1}{2}
\left[
\begin{array}{ccc}
0&0&-u_x\\
0&0&0\\
u_x&0&0
\end{array}
\right],
\eeq
\beq\notag
\n\otimes\n=
\left[
\begin{array}{ccc}
\sin^2\theta&0&\sin\theta\cos\theta\\
0&0&0\\
\sin\theta\cos\theta&0&\cos^2\theta
\end{array}
\right].
\eeq
Then 
\beq\notag
N=\dot{\n}-\omega \n=\left(\theta_t+\frac12u_x\right)\big(\cos\theta, 0,- \sin\theta\big)^T,
\eeq
\beq\notag
N\otimes\n=\left(\theta_t+\frac12u_x\right)
\left[
\begin{array}{ccc}
\sin\theta\cos\theta&0&\cos^2\theta\\
0&0&0\\
-\sin^2\theta&0&-\sin\theta\cos\theta
\end{array}
\right],
\eeq
\beq\notag
\n\otimes N=\left(\theta_t+\frac12u_x\right)
\left[
\begin{array}{ccc}
\sin\theta\cos\theta&0&-\sin^2\theta\\
0&0&0\\
\cos^2\theta&0&-\sin\theta\cos\theta
\end{array}
\right].
\eeq
And also
\beq\notag
D\n=\frac12u_x\big(\cos\theta, 0,\sin\theta\big)^T,
\eeq
\beq\notag
\n^TD\n=u_x\cos\theta\sin\theta,
\eeq
\beq\notag
D\n\otimes \n=\frac12u_x
\left[
\begin{array}{ccc}
\sin\theta\cos\theta&0&\cos^2\theta\\
0&0&0\\
\sin^2\theta&0&\sin\theta\cos\theta
\end{array}
\right],
\eeq
\beq\notag
\n\otimes D\n=\frac12u_x
\left[
\begin{array}{ccc}
\sin\theta\cos\theta&0&\sin^2\theta\\
0&0&0\\
\cos^2\theta&0&\sin\theta\cos\theta
\end{array}
\right].
\eeq
Hence
\beq\notag
\begin{split}
{\bf g}=&\gamma_1N+\gamma_2D\n\\
=&\gamma_1\theta_t\big(\cos\theta, 0,- \sin\theta\big)^T+\frac12u_x\big((\gamma_1+\gamma_2)\cos\theta, 0,(\gamma_2-\gamma_1) \sin\theta\big)^T.
\end{split}
\eeq
Since the last term in Oseen-Frank energy density is null Lagrangian term, without loss of generalization, we only compute the first three terms. The Oseen-Frank energy density of this case will be 
\beq\notag
W(\n,\nabla\n)=K_1(\n^1_x)^2+K_3\big((\n^1\n^1_x)^2+(\n^1\n^3_x)^2\big)=\frac12\theta^2_x\big(K_1\cos^2\theta+K_3\sin^2\theta\big),
\eeq
where $\n^i$ is the $i$-th component of $\n$.
Thus
\beq\notag
\frac{\partial W}{\partial\n}=\big(K_3\theta_x^2\sin\theta, 0,0\big)^T,
\eeq
\beq\notag
\frac{\partial W}{\partial\nabla\n}=
\left[
\begin{array}{ccc}
K_1\theta_x\cos\theta+K_3\theta_x\sin^2\theta\cos\theta&0&0\\
0&0&0\\
-K_3\theta_x\sin^3\theta&0&0
\end{array}
\right].
\eeq
The Lagrangian constant 
\beq\notag
\begin{split}
\gamma=&\frac{\partial W}{\partial\n}\cdot\n+\gamma_2D\n\cdot\n
-\nabla\cdot\left(\frac{\partial W}{\partial\nabla\n}\right)\cdot\n-|\n_t|^2\\
=&(K_1+2K_3)\theta_x^2\sin^2\theta-K_1\theta_{xx}\sin\theta\cos\theta+\gamma_2u_x\sin\theta\cos\theta-|\theta_t|^2.
\end{split}
\eeq
We are ready to derive the system \eqref{time_poi}. We first work on the equation of $\theta$. By the third equation of \eqref{wlce}, we have
\beq\notag
\begin{split}
\n_{tt}=&\theta_{tt}\big(\cos\theta,0,-\sin\theta\big)^T+|\theta_t|^2\big(-\sin\theta,0,-\cos\theta\big)^T\\
=&\gamma\n-\frac{\partial W}{\partial\n}-{\bf g}+\nabla\cdot\left(\frac{\partial W}{\partial\nabla\n}\right)\\
=&-|\theta_t|^2\big(\sin\theta,0,\cos\theta\big)^T-\gamma_1\theta_t\big(\cos\theta, 0,- \sin\theta\big)^T+u_x{\mathbf T_1}+K_1\mathbf T_2+K_3\mathbf T_3.
\end{split}
\eeq
Then
\beq\label{eqnAA1}
\begin{split}
(\theta_{tt}+\gamma_1\theta_t)\big(\cos\theta,0,-\sin\theta\big)^T
=u_x{\mathbf T_1}+K_1\mathbf T_2+K_3\mathbf T_3.
\end{split}
\eeq
Here the vector $\mathbf T_1$ is given by 
\beq\label{eqnT1}
\begin{split}
\mathbf T_1=&\gamma_2\sin\theta\cos\theta\big(\sin\theta, 0,\cos\theta\big)^T-\frac12\big((\gamma_1+\gamma_2)\cos\theta, 0,(\gamma_2-\gamma_1) \sin\theta\big)^T\\
=&-\frac{\gamma_1}{2}\big(\cos\theta, 0,-\sin\theta\big)^T+\frac{\gamma_2}{2}\big(2\sin^2\theta\cos\theta-\cos\theta, 0,2\sin\theta\cos^2\theta-\sin\theta\big)^T\\
=&\left(-\frac{\gamma_1}{2}-\frac{\gamma_2}{2}\cos(2\theta)\right)\big(\cos\theta, 0,-\sin\theta\big)^T.
\end{split}
\eeq
The nonzero components of vector $\mathbf T_2$ is given by 
\begin{align*}
\mathbf T_2^1=&\theta_x^2\sin^3\theta-\theta_{xx}\sin^2\theta\cos\theta+(\theta_x\cos\theta)_x\\
=&-\sin^2\theta(\theta_x\cos\theta)_x+(\theta_x\cos\theta)_x
=\cos^2\theta(\theta_x\cos\theta)_x,\\
\mathbf T_2^3=&\theta_x^2\sin^2\theta\cos\theta-\theta_{xx}\sin\theta\cos^2\theta
=-\sin\theta\cos\theta(\theta_x\cos\theta)_x.
 \end{align*}
So the vector $\mathbf T_2$ is 
\beq\label{eqnT2}
\begin{split}
\mathbf T_2=
\cos\theta(\theta_x\cos\theta)_x\big(\cos\theta, 0,-\sin\theta\big)^T.
\end{split}
\eeq
Similarly, the vector $\mathbf T_3$ is 
\beq\label{eqnT3}
\begin{split}
\mathbf T_3=
\sin\theta(\theta_x\sin\theta)_x\big(\cos\theta, 0,-\sin\theta\big)^T.
\end{split}
\eeq
Plugging \eqref{eqnT1}, \eqref{eqnT2} and \eqref{eqnT3} into \eqref{eqnAA1}, we obtain
$$
\theta_{tt}+\gamma_1\theta_t=K_1\cos\theta(\theta_x\cos\theta)_x+K_3\sin\theta(\theta_x\sin\theta)_x+\left(-\frac{\gamma_1}{2}-\frac{\gamma_2}{2}\cos(2\theta)\right)u_x,
$$
which is exactly the second equation in \eqref{time_poi}. 

For the equation of $u$, direct computation gives 
\beq\notag
\nabla\cdot\left(\frac{\partial W}{\partial\nabla\n}\otimes\nabla\n\right)
=\big((K_1\theta_x^2\cos^2\theta)_x+(K_3\theta_x^2\sin^2\theta)_x, 0,0\big)^T
\eeq
and 
\beq\notag
\begin{split}
\nabla\cdot\sigma
=&\alpha_1\big((u_x\sin^3\theta\cos\theta)_x,0,(u_x\sin^2\theta\cos^2\theta)_x\big)^T\\
&+\alpha_2\left(\big((\theta_t+\frac12u_x)\sin\theta\cos\theta\big)_x,0,-\big((\theta_t+\frac12u_x)\sin^2\theta\big)_x\right)^T\\
&+\alpha_3\left(\big((\theta_t+\frac12u_x)\sin\theta\cos\theta\big)_x,0,\big((\theta_t+\frac12u_x)\cos^2\theta\big)_x\right)^T\\
&+\frac{\alpha_4}{2}\big(0,0, u_{xx}\big)^T
+\frac{\alpha_5}{2}\big((u_x\sin\theta\cos\theta)_x,0,(u_x\sin^2\theta)_x\big)^T\\
&+\frac{\alpha_6}{2}\big((u_x\sin\theta\cos\theta)_x,0,(u_x\cos^2\theta)_x\big)^T.
\end{split}
\eeq
Therefore the first equation of system \eqref{wlce} can be written into following three equations
\beq\label{eqnPx}
\begin{split}
P_x=&K_1(\theta_x^2\cos^2\theta)_x+K_3(\theta_x^2\sin^2\theta)_x+\alpha_1(u_x\sin^3\theta\cos\theta)_x\\
&+\frac12\left(\alpha_2+\alpha_3+\alpha_5+\alpha_6\right)(u_x\sin\theta\cos\theta)_x+(\alpha_2+\alpha_3)\big(\theta_t\sin\theta\cos\theta\big)_x,
\end{split}
\eeq
\beq\label{eqnPy}
P_y=0,
\eeq
\beq\label{eqnPz}
\begin{split}
u_t+P_z=&\frac{\alpha_4}{2}u_{xx}
+\alpha_1(u_x\sin^2\theta\cos^2\theta)_x
-\alpha_2\big((\theta_t+\frac12u_x)\sin^2\theta\big)_x\\
&+\alpha_3\big((\theta_t+\frac12u_x)\cos^2\theta\big)_x
+\frac{\alpha_5}{2}(u_x\sin^2\theta)_x
+\frac{\alpha_6}{2}(u_x\cos^2\theta)_x.
\end{split}
\eeq
By these equations, one can obtain that $P_z=a$ for some constant $a$. The right hand side of \eqref{eqnPz} can be rewritten as $(g(\theta)u_x+h(\theta)\theta_t)_x$ 
where $g(\theta)$ and $h(\theta)$ is defined as \eqref{fgh}. 
Therefore, we obtain the first equation of \eqref{time_poi}.


\bigskip
\subsection{Derivation of system (\ref{semilinear_new})}\label{Sec_A3}
We will in fact derive the semilinear system in $XY$-coordinates for \eqref{time_poi} with $\nu=\rho=1$ and $a=0$. Recall, from (\ref{defwz}) and (\ref{defpq}), that we have introduced
\[w=2\arctan R,\quad z=2\arctan S, \quad p=\frac{1+R^2}{X_x},\quad q=-\frac{1+S^2}{Y_x}.\]
It is easy to have that 
\[
R=\tan\frac{w}{2},\quad \frac{R}{1+R^2}=\frac{1}{2}\sin w, \quad \frac{1}{1+R^2}=\cos^2\frac{w}{2},
  \]
\[S=\tan\frac{z}{2},
\quad
\frac{S}{1+S^2}=\frac{1}{2}\sin z,
 \quad
\frac{1}{1+S^2}=\cos^2\frac{z}{2}.\]

By (\ref{trans}) and (\ref{RxSx}), we have
\[
\theta_X=\frac{\sin w}{4c}p,\quad \theta_Y=\frac{\sin z}{4c}q.
\]

Denote 
\[
U=h(\theta)u_x.
\]

By \eqref{RxSx}, we have
\beq\label{A8}
\left\{
\begin{array}{rcl}
 (S^2)_t+c(\theta)(S^2)_x &=& \frac{c'}{2c}(S^3-R^2S)-\gamma_1(RS+S^2)-2SU,\\
 (R^2)_t-c(\theta)(R^2)_x &=& \frac{c'}{2c} (R^3-RS^2)-\gamma_1(RS+R^2)-2RU,
\end{array}\right.
\eeq
so
\beq
\left\{
\begin{array}{rcl}
 (S^2)_X &=& \frac{1}{2c}p\frac{1}{1+R^2}\Big\{\frac{c'}{2c}(S^3-R^2S)-\gamma_1(RS+S^2)-2SU\Big\},\\
 (R^2)_Y &=& \frac{1}{2c}q\frac{1}{1+S^2}\Big\{\frac{c'}{2c} (R^3-RS^2)-\gamma_1(RS+R^2)-2RU\Big\}.
\end{array}\right.
\eeq
Hence,
\beq
\begin{split}
z_X=&\textstyle\frac{1}{1+S^2}\frac{1}{S}(S^2)_X
\nn\\
	     =&\textstyle
\frac{1}{2c}p\frac{1}{(1+R^2)(1+S^2)}\Big\{\frac{c'}{2c}(S^2-R^2)-\gamma_1(R+S)-2U\Big\}    \nn	
\\=&\textstyle p
\Big\{\frac{c'}{4c^2}(\cos^2\frac{w}{2}-\cos^2\frac{z}{2})-\frac{\gamma_1}{4c}(\sin w\cos^2\frac{z}{2}+\sin z\cos^2\frac{w}{2})-\frac{1}{c}\cos^2\frac{z}{2}\cos^2\frac{w}{2}U\Big\} .  \nn
\end{split}
\eeq
Similarly,
\[\textstyle
w_Y=q
\Big\{\frac{c'}{4c^2}(\cos^2\frac{z}{2}-\cos^2\frac{w}{2})-\frac{\gamma_1}{4c}(\sin w\cos^2\frac{z}{2}+\sin z\cos^2\frac{w}{2})-\frac{1}{c}\cos^2\frac{z}{2}\cos^2\frac{w}{2}U\Big\}  .
\]


\medskip

On the other hand, using $X_t-cX_x=0$, we have
\[
X_{tx}-cX_{xx}=\frac{c'}{2c}(R-S)X_x.
\]
Then
\begin{align*}
p_Y =&\textstyle\frac{1}{X_x}(R^2)_Y-\frac{q\cos^2{\frac{w}{2}}}{2c}\frac{1+R^2}{X_x^2}(X_{xt}-cX_{xx}) \\
=&\textstyle\frac{pq}{2c}\frac{1}{(1+R^2)(1+S^2)}\Big\{\frac{c'}{2c} (R^3-RS^2)-\gamma_1(RS+R^2)-2RU\Big\}
-\frac{pq}{2c}\frac{c'}{2c}\frac{R-S}{1+S^2}\nn\\
=&\textstyle\frac{pq}{2c}\frac{1}{(1+R^2)(1+S^2)}\Big\{\frac{c'}{2c} (-R-RS^2)-\gamma_1(RS+R^2)-2RU\Big\}
+\frac{pq}{2c}\frac{c'}{2c}\frac{S}{1+S^2}\nn\\
=&\textstyle pq\frac{c'}{8c^2}(\sin z-\sin w)
-\gamma_1\frac{pq}{2c}[\frac{1}{4}\sin w\sin z+\sin^2 \frac{w}{2}\cos^2\frac{z}{2}]
-\frac{pq}{2c}U\sin w\cos^2\frac{z}{2}.
\end{align*}
Similarly, we have 
\[
q_X=\textstyle pq\frac{c'}{8c^2}(\sin w-\sin z)
-\gamma_1\frac{pq}{2c}[\frac{1}{4}\sin w\sin z+\sin^2 \frac{z}{2}\cos^2\frac{w}{2}]
-\frac{pq}{2c}U\sin z\cos^2\frac{w}{2}.
\]

In summary, we have the following system of equations.
\beq\label{semilinear}
\begin{split}
\theta_X&\textstyle=\frac{\sin w}{4c}p,\qquad \theta_Y=\frac{\sin z}{4c}q\\
z_X&\textstyle=p
\Big\{\frac{c'}{4c^2}(\cos^2\frac{w}{2}-\cos^2\frac{z}{2})-\frac{\gamma_1}{4c}(\sin w\cos^2\frac{z}{2}+\sin z\cos^2\frac{w}{2})-\frac{1}{c}\cos^2\frac{z}{2}\cos^2\frac{w}{2}U\Big\}   \nn\\
w_Y&\textstyle=q
\Big\{\frac{c'}{4c^2}(\cos^2\frac{z}{2}-\cos^2\frac{w}{2})-\frac{\gamma_1}{4c}(\sin w\cos^2\frac{z}{2}+\sin z\cos^2\frac{w}{2})-\frac{1}{c}\cos^2\frac{z}{2}\cos^2\frac{w}{2}U\Big\}  
\nn\\
p_Y&\textstyle=pq\frac{c'}{8c^2}(\sin z-\sin w)
-\gamma_1\frac{pq}{2c}[\frac{1}{4}\sin w\sin z+\sin^2 \frac{w}{2}\cos^2\frac{z}{2}]
-\frac{pq}{2c}U\sin w\cos^2\frac{z}{2}\nn\\
q_X&\textstyle=pq\frac{c'}{8c^2}(\sin w-\sin z)
-\gamma_1\frac{pq}{2c}[\frac{1}{4}\sin w\sin z+\sin^2 \frac{z}{2}\cos^2\frac{w}{2}]
-\frac{pq}{2c}U\sin z\cos^2\frac{w}{2}\nn
\end{split}
\eeq
where recall that $U=h(\theta)u_x.$ 

Denote $J=g(\theta)u_x+h(\theta)\theta_t$.
Then
\[
U=\frac{h(\theta)}{g(\theta)}(J-h(\theta)\theta_t).
\]
In terms of the variable $J$, the above system is 
\beq\label{semilinearA}
\begin{split}
&\theta_X=\frac{\sin w}{4c}p,\quad \theta_Y=\frac{\sin z}{4c}q\\
&\textstyle z_X=p
\Big\{\frac{c'}{4c^2}(\cos^2\frac{w}{2}-\cos^2\frac{z}{2})+\frac{\frac{h^2(\theta)}{g(\theta)}-\gamma_1}{4c}(\sin w\cos^2\frac{z}{2}+\sin z\cos^2\frac{w}{2})-\frac{h(\theta)}{cg(\theta)}J\cos^2\frac{z}{2}\cos^2\frac{w}{2} \Big\} \\
&\textstyle w_Y=q
\Big\{\frac{c'}{4c^2}(\cos^2\frac{z}{2}-\cos^2\frac{w}{2})+\frac{\frac{h^2(\theta)}{g(\theta)}-\gamma_1}{4c}(\sin w\cos^2\frac{z}{2}+\sin z\cos^2\frac{w}{2})-\frac{h(\theta)}{cg(\theta)}J\cos^2\frac{z}{2}\cos^2\frac{w}{2}\Big\} \\
&\textstyle p_Y=pq\Big\{\frac{c'}{8c^2}(\sin z-\sin w)
+\frac{\frac{h^2(\theta)}{g(\theta)}-\gamma_1}{2c}(\frac{1}{4}\sin w\sin z+\sin^2 \frac{w}{2}\cos^2\frac{z}{2})
-\frac{h(\theta)}{2cg(\theta)}J\sin w\cos^2\frac{z}{2}\Big\}\\
&\textstyle q_X=pq\Big\{\frac{c'}{8c^2}(\sin w-\sin z)
+\frac{\frac{h^2(\theta)}{g(\theta)}-\gamma_1}{2c}(\frac{1}{4}\sin w\sin z+\sin^2 \frac{z}{2}\cos^2\frac{w}{2})
-\frac{h(\theta)}{2cg(\theta)}J\sin z\cos^2\frac{w}{2}\Big\}
\end{split}
\eeq
where the coefficient $\frac{h^2(\theta)}{g(\theta)}-\gamma_1<0$ by \eqref{damping}.

For the special case where 
$\gamma_1=2$, $\gamma_2=0$ and $g(\theta)=h(\theta)=1$,
system (\ref{semilinearA}) reduces to system \eqref{semilinear_new}.

\subsection{H\"older continuity of $\mathcal M(J)$ in \S \ref{sec_6.2}}
\label{secA3}

First, we consider three types of integrals in the definition of $J$ in \eqref{hatvtL}, 
\beq\notag
L_i(x,t):=\int_{\R}H(x-y,t)f(y)\,dy
\eeq
\beq\notag
L_{ii}(x,t):=\int_0^t\int_{\mathbb R}H(x-y,t-s)g(y,s)\,dyds
\eeq
\beq\notag
L_{iii}(x,t):=\int_0^t\int_{\mathbb R}H_x(x-y,t-s)f(y,s)\,dyds.
\eeq

We first prove the following lemma working generally.

\begin{lemma}\label{lem2}
If $f(x,t)\in L^{\infty}([0,T],  L^2(\R))$, $g(x,t)\in L^{\infty}([0,T],  L^1(\R))$ or $g(x,t)\in L^{\infty}([0,T],  L^2(\R))$ we have $L_j(x,t)$ is H\"older continuous w.r.t $x$ and $t$, for all $j=i,ii,iii$, with  exponent $\beta\in(0,\frac{1}{4})$.
\end{lemma}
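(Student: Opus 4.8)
The plan is to prove the lemma by collecting a short list of Gaussian-kernel estimates and then applying them termwise to $L_i,L_{ii},L_{iii}$. First I would record, for $\tau>0$, the size bounds $\|H(\cdot,\tau)\|_{L^1}=1$, $\|H(\cdot,\tau)\|_{L^2}=C\tau^{-1/4}$, $\|H(\cdot,\tau)\|_{L^\infty}=C\tau^{-1/2}$, $\|H_x(\cdot,\tau)\|_{L^1}=C\tau^{-1/2}$, $\|H_x(\cdot,\tau)\|_{L^2}=C\tau^{-3/4}$, which already make all three integrals converge absolutely (as in Lemma~\ref{lemma3.1}), together with the increment bounds obtained from $\widehat H(\xi,\tau)=e^{-\tau\xi^2}$, $\widehat{H_x}(\xi,\tau)=i\xi e^{-\tau\xi^2}$ via Plancherel, using $|e^{ia\xi}-1|\le 2^{1-\beta}|a\xi|^{\beta}$ and $|1-e^{-h\xi^2}|\le (h\xi^2)^{\beta}$ for $\beta\in(0,1]$:
\[
\|H(\cdot+a,\tau)-H(\cdot,\tau)\|_{L^2}\le C|a|^{\beta}\tau^{-\frac14-\frac\beta2},\qquad \|H(\cdot+a,\tau)-H(\cdot,\tau)\|_{L^\infty}\le C|a|^{\beta}\tau^{-\frac12-\frac\beta2},
\]
\[
\|H_x(\cdot+a,\tau)-H_x(\cdot,\tau)\|_{L^2}\le C|a|^{\beta}\tau^{-\frac34-\frac\beta2},\qquad \|H(\cdot,\tau+h)-H(\cdot,\tau)\|_{L^2}\le C h^{\beta}\tau^{-\frac14-\beta},
\]
\[
\|H(\cdot,\tau+h)-H(\cdot,\tau)\|_{L^\infty}\le C h^{\beta}\tau^{-\frac12-\beta},\qquad \|H_x(\cdot,\tau+h)-H_x(\cdot,\tau)\|_{L^2}\le C h^{\beta}\tau^{-\frac34-\beta}.
\]
(The $L^\infty$ ones may alternatively be obtained by interpolating the mean-value bound with the size bound.)

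For $L_i$ (time-independent $f\in L^2$, and in fact $f=u_0'+\theta_1\in C^\alpha\cap L^\infty\cap L^2$ in the application), Cauchy--Schwarz against the $L^2$ increment bounds gives $|L_i(x_1,t)-L_i(x_2,t)|\le C|x_1-x_2|^\beta t^{-\frac14-\frac\beta2}\|f\|_{L^2}$ and the analogous estimate in $t$, hence local H\"older--$\beta$ continuity on $\R\times(0,T]$; when $f\in C^\alpha\cap L^\infty$ the fact that convolution with a probability density does not increase the $C^\alpha$ seminorm, together with $\|H(\cdot,t)*f-f\|_\infty\le [f]_{C^\alpha}Ct^{\alpha/2}$, upgrades this to H\"older continuity uniform up to $t=0$. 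For $L_{ii}$, with $a=x_1-x_2$ and $\tau=t-s$, I would bound $|L_{ii}(x_1,t)-L_{ii}(x_2,t)|\le\int_0^t\|H(\cdot+a,\tau)-H(\cdot,\tau)\|_{L^{p'}}\|g(\cdot,s)\|_{L^p}\,ds$, taking $(p,p')=(2,2)$ when $g\in L^\infty_tL^2_x$ and $(p,p')=(1,\infty)$ when $g\in L^\infty_tL^1_x$; the relevant kernel increments behave like $\tau^{-\frac14-\frac\beta2}$ and $\tau^{-\frac12-\frac\beta2}$, both integrable on $(0,t)$ for $\beta<1$, so this is $\le C|a|^\beta t^{\kappa}\|g\|$ with $\kappa>0$. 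For the time increment $t_1<t_2$ split $\int_0^{t_2}=\int_{t_1}^{t_2}+\int_0^{t_1}$: the first piece is $\int_{t_1}^{t_2}\|H(\cdot,t_2-s)\|_{L^{p'}}\|g\|_{L^p}\,ds$, which is $\le C(t_2-t_1)^{3/4}$ (the $L^2$ case) or $\le C(t_2-t_1)^{1/2}$ (the $L^1$ case), each $\le C(t_2-t_1)^\beta$ on $[0,2T]$ since $\beta<\tfrac14$; the second piece uses the time-increment bounds of $H$ and $\int_0^{t_1}(t_1-s)^{-\frac14-\beta}\,ds<\infty$, resp. $\int_0^{t_1}(t_1-s)^{-\frac12-\beta}\,ds<\infty$, both valid for $\beta<\tfrac12$. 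Thus $L_{ii}$ is H\"older--$\beta$ for every $\beta\in(0,\tfrac14)$.

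The sharp term is $L_{iii}$, and this is where $\beta<\tfrac14$ enters. The spatial increment is controlled by $\int_0^t\|H_x(\cdot+a,\tau)-H_x(\cdot,\tau)\|_{L^2}\|f(\cdot,s)\|_{L^2}\,ds\le C|a|^\beta\|f\|_{L^\infty_tL^2_x}\int_0^t\tau^{-\frac34-\frac\beta2}\,d\tau$, finite for $\beta<\tfrac12$. For the time increment ($t_1<t_2$), the ``new slice'' $\int_{t_1}^{t_2}\|H_x(\cdot,t_2-s)\|_{L^2}\|f\|_{L^2}\,ds\le C\|f\|_{L^\infty_tL^2_x}\int_{t_1}^{t_2}(t_2-s)^{-3/4}\,ds=C(t_2-t_1)^{1/4}$ is dominated by $C(t_2-t_1)^\beta$ only for $\beta\le\tfrac14$, and the remaining piece $\int_0^{t_1}\|H_x(\cdot,t_2-s)-H_x(\cdot,t_1-s)\|_{L^2}\|f\|_{L^2}\,ds\le C(t_2-t_1)^{\beta}\|f\|_{L^\infty_tL^2_x}\int_0^{t_1}(t_1-s)^{-\frac34-\beta}\,ds$ converges only for $\beta<\tfrac14$; together these force the strict bound. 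Joint H\"older continuity in $(x,t)$ then follows from the one-variable estimates via $|L(x_1,t_1)-L(x_2,t_2)|\le|L(x_1,t_1)-L(x_2,t_1)|+|L(x_2,t_1)-L(x_2,t_2)|\le C(|x_1-x_2|^2+|t_1-t_2|^2)^{\beta/2}$, using $a^\beta+b^\beta\le 2(a^2+b^2)^{\beta/2}$.

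I expect the bulk of the work to be routine bookkeeping: verifying that each of the several time integrals converges for all $\beta\in(0,\tfrac14)$, and that the left-over factors $(t_2-t_1)^{\mu}$ with $\mu>\beta$ are harmless on the bounded interval $[0,T]$. The single genuinely tight point — and the reason the exponent cannot reach or exceed $\tfrac14$ — is the $\tau^{-3/4}$ singularity of $\|H_x(\cdot,\tau)\|_{L^2}$; this is precisely the $t^{1/4}$ gain already exploited in Lemma~\ref{lemma3.1}, and it is why the estimate on $\mathcal M(J)-J^0$ in \S\ref{sec_6.2} carries the power $\delta^{\frac14-\alpha}$. The only other thing to justify carefully is the convolution identity $L(x_1,\cdot)-L(x_2,\cdot)=(\text{translate-minus-identity of the kernel})*(\text{data})$ and the splitting in $t$, which is immediate from Fubini given the $L^1$/$L^2$ kernel bounds recorded at the outset.
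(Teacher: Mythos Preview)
Your argument is correct, and it follows a genuinely different route from the paper's. The paper works entirely in real variables: it writes out $H_x(x_2-y,t-s)-H_x(x_1-y,t-s)$ explicitly, splits it by hand into two pieces $I_1,I_2$, and then controls each by Cauchy--Schwarz in $(y,s)$ with carefully chosen weights $(t-s)^{-\gamma}$, followed by repeated changes of variable $y\mapsto u\sqrt{t-s}$ to reduce to Gaussian moment integrals; this produces the same power $t^{\frac14-\frac\beta2}$ you obtain. Your approach instead packages all of this into a handful of kernel-increment norms $\|H(\cdot+a,\tau)-H(\cdot,\tau)\|_{L^p}$ and $\|H_x(\cdot,\tau+h)-H_x(\cdot,\tau)\|_{L^2}$ obtained once via Plancherel (or interpolation), and then applies Young's inequality to the convolution; the bookkeeping is shorter and the origin of the constraint $\beta<\tfrac14$ is more transparent (it is exactly the integrability of $\tau^{-3/4-\beta}$ near $\tau=0$ in the time-increment piece for $L_{iii}$). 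What the paper's direct computation buys is that no Fourier analysis is invoked and every constant is in principle explicit; what your approach buys is modularity --- the same kernel bounds immediately handle all three of $L_i,L_{ii},L_{iii}$ and both the $L^1$ and $L^2$ hypotheses on $g$ without reworking the pointwise estimates. Your observation that $L_i$ is only H\"older on $\R\times(0,T]$ from $f\in L^2$ alone, with continuity up to $t=0$ requiring the additional $C^\alpha$ regularity of $u_0'+\theta_1$ assumed in \eqref{tecinitial}, is a point the paper leaves implicit.
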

 \begin{proof}
We only provide the proof of H\"older continuity w.r.t $x$ for $L_{iii}$, the rest cases can be proved similarly. For any $x_1<x_2$, it is sufficient to show
\beq\label{hbdx}
\left|\int_0^t\int_{\R} \frac{H_y(x_2-y,t-s)-H_y(x_1-y,t-s)}{(x_2-x_1)^\beta}f(y,s) dy ds\right|\leq\hbox{Constant}\, \cdot\, t^{\frac{1}{4}-\frac{\beta}{2}},
\eeq
for some $\beta\in(0,1)$.
Since
$$
H_x(x,t)=-\frac{1}{4\sqrt{\pi}}\frac{x}{t^{3/2}}\exp\left(-\frac{x^2}{4t}\right),
$$
\beq\notag
\begin{split}
&H_x(x_2-y,t-s)-H_x(x_1-y,t-s)\\
&\quad =-\frac{1}{4\sqrt{\pi}}\frac{1}{(t-s)^{3/2}}
\left\{
(x_2-x_1)\exp\left(-\frac{(x_2-y)^2}{4(t-s)}\right)\right.\\
&\qquad -\left.(x_1-y)\left[\exp\left(-\frac{(x_2-y)^2}{4(t-s)}\right)-\exp\left(-\frac{(x_1-y)^2}{4(t-s)}\right)\right]
\right\}\\
&\quad :=I_1+I_2.
\end{split}
\eeq
The integral related to the first term $I_1$ can be estimated as follows
\beq\label{clm1}
\begin{split}
&\int_0^t\int_{\R}\frac{1}{(t-s)^{3/2}}(x_2-x_1)^{1-\beta}\exp\left(-\frac{(x_2-y)^2}{4(t-s)}\right)|f(y,s)|\,dyds\\
\leq& C\int_0^t\int_{\R}\frac{1}{(t-s)^{3/2}}\left(|x_2-y|^{1-\beta}+|x_1-y|^{1-d}\right)\exp\left(-\frac{(x_2-y)^2}{4(t-s)}\right)|f(y,s)|\,dyds.
\end{split}
\eeq
By the same argument in \eqref{key}, and choosing $0<\beta<\frac{1}{8}$, the first term in \eqref{clm1} should be controlled by 
\beq\label{clm2}
\int_0^t\int_{\R}\frac{1}{(t-s)^{3/2}}|x_2-y|^{1-\beta}\exp\left(-\frac{(x_2-y)^2}{4(t-s)}\right)|f(y,s)|\,dyds\leq Ct^{\frac14-\frac \beta 2}\|f\|_{L^{\infty}((0,T), L^2(\R))}.
\eeq
For the second term in \eqref{clm1}
\beq\label{clm3}
\begin{split}
&\int_0^t\int_{\R}\frac{1}{(t-s)^{3/2}}|x_1-y|^{1-\beta}\exp\left(-\frac{(x_2-y)^2}{4(t-s)}\right)|f(y,s)|\,dyds\\
\leq& C\left(\int_0^t\int_{\R}\frac{|x_1-y|^{2(1-\beta)}}{(t-s)^{9/4-\beta/2}}\exp\left(-\frac{(x_2-y)^2}{2(t-s)}\right)\,dyds\right)^{\frac12}\left(\int_0^t\int_{\R}\frac{|f(y,s)|^2}{(t-s)^{3/4+\beta/2}}\,dyds\right)^{\frac12}\\
\leq& C\left(\int_0^t\int_{\R}\frac{|\sqrt{t-s}u|^{2(1-\beta)}}{(t-s)^{7/4-\beta/2}}\exp\left(-\frac{u^2}{2}-\frac{x_2-x_1}{\sqrt{t-s}}u\right)\,duds\right)^{\frac12}\left(\int_0^t\int_{\R}\frac{|f(y,s)|^2}{(t-s)^{3/4+\beta/2}}\,dyds\right)^{\frac12}\\
\leq& C\left(\int_0^t\int_{\R}\frac{|u|^{2(1-\beta)}}{(t-s)^{3/4+\beta/2}}\exp\left(-u\big(\frac{u}{2}+\frac{x_2-x_1}{\sqrt{t-s}}\big)\right)\,duds\right)^{\frac12}\left(\int_0^t\int_{\R}\frac{|f(y,s)|^2}{(t-s)^{3/4+\beta/2}}\,dyds\right)^{\frac12}\\
\leq&  Ct^{\frac{1}{4}-\frac \beta2}\|f\|_{L^{\infty}((0,T), L^2(\R))},
\end{split}
\eeq
where $x_1-y=u\sqrt{t-s}$, and 
\beq\notag
\begin{split}
&\exp\left(-\frac{(x_2-y)^2}{2(t-s)}\right)
=\exp\left(-\frac{(x_2-x_1+x_1-y)^2}{2(t-s)}\right)\\
=&\exp\left(-\frac{(x_2-x_1)^2}{2(t-s)}\right)\cdot\exp\left(-\frac{2(x_2-x_1)(x_1-y)}{2(t-s)}\right)\cdot\exp\left(-\frac{(x_1-y)^2}{2(t-s)}\right).
\end{split}
\eeq
Putting \eqref{clm2} and \eqref{clm3} into \eqref{clm1}, it holds
\beq\label{clm4}
\begin{split}
&\int_0^t\int_{\R}\frac{1}{(t-s)^{3/2}}(x_2-x_1)^{1-\beta}\exp\left(-\frac{(x_2-y)^2}{4(t-s)}\right)|f(y,s)|\,dyds\\
&\quad \leq Ct^{\frac{1}{4}-\frac \beta2}\|f\|_{L^{\infty}((0,T), L^2(\R))}.
\end{split}
\eeq

\medskip
On the other hand, by mean value theorem, there exists a $\xi\in(x_1,x_2)$ such that
\beq\notag
\begin{split}
e^{-\frac{(x_2-y)^2}{4(t-s)}}-e^{-\frac{(x_1-y)^2}{4(t-s)}}
=&-e^{-\frac{(\xi-y)^2}{4(t-s)}} \frac{\xi-y}{2(t-s)}(x_2-x_1)
\leq-e^{-\frac{(x_1-y)^2}{4(t-s)}}\frac{x_1-y}{2(t-s)}(x_2-x_1).
\end{split}
\eeq
Hence for the term related to $I_2$, it holds
\beq\label{clm5}
\begin{split}
&\int_0^t\int_{\R}\frac{1}{(t-s)^{3/2}}\frac{(x_1-y)^2}{t-s}(x_2-x_1)^{1-\beta}\exp\left(-\frac{(x_1-y)^2}{4(t-s)}\right) |f(y,s)|\,dyds\\
\leq& C\int_0^t\int_{\R}\frac{1}{(t-s)^{3/2}}\frac{(x_1-y)^2}{t-s}\left(|x_2-y|^{1-\beta}+|x_1-y|^{1-\beta}\right)\exp\left(-\frac{(x_1-y)^2}{4(t-s)}\right)|f(y,s)|\,dyds.
\end{split}
\eeq
The second term is similar to \eqref{key} and \eqref{clm2}
\beq\label{clm6}
\int_0^t\int_{\R}\frac{1}{(t-s)^{5/2}}|x_1-y|^{3-\beta}\exp\left(-\frac{(x_1-y)^2}{4(t-s)}\right)|f(y,s)|\,dyds\leq Ct^{\frac14-\frac \beta 2}\|f\|_{L^{\infty}((0,T), L^2(\R))}.
\eeq
For the first term, one has
\beq\label{clm7}
\begin{split}
& \int_0^t\int_{\R}\frac{1}{(t-s)^{3/2}}\frac{(x_1-y)^2}{t-s}|x_2-y|^{1-\beta}\exp\left(-\frac{(x_1-y)^2}{4(t-s)}\right)f(y,s)\,dyds\\
\leq& C\left(\int_0^t\int_{\R}\frac{|x_2-y|^{2(1-\beta)}}{(t-s)^{9/4-d/2}}\frac{(x_1-y)^4}{(t-s)^2}\exp\left(-\frac{(x_1-y)^2}{2(t-s)}\right)\,dyds\right)^{\frac12}\cdot\left(\int_0^t\int_{\R}\frac{|f(y,s)|^2}{(t-s)^{3/4+\beta/2}}\,dyds\right)^{\frac12}.
\end{split}
\eeq
Let $x_2-y=\sqrt{t-s}u$. Then it holds
\beq\label{clm8}
\begin{split}
&\int_0^t\int_{\R}\frac{|u|^{2(1-d)}}{(t-s)^{3/4+\beta/2}}\frac{(x_1-x_2+\sqrt{t-s}u)^4}{(t-s)^2}\exp\left(-\frac{(x_1-x_2+\sqrt{t-s}u)^2}{2(t-s)}\right)\,duds\\
\leq & C\int_0^t\int_{\R}\frac{|u|^{2(1-\beta)}}{(t-s)^{3/4+\beta/2}}\frac{(x_1-x_2)^4+(\sqrt{t-s}u)^4}{(t-s)^2}\exp\left(-\frac{(x_1-x_2+\sqrt{t-s}u)^2}{2(t-s)}\right)\,duds.
\end{split}
\eeq
It is easy to see 
\beq\notag
\begin{split}
&\exp\left(-\frac{(x_1-x_2+\sqrt{t-s}u)^2}{2(t-s)}\right)\\
&\quad =\exp\left(-\frac{(x_1-x_2)^2}{2(t-s)}\right)
\exp\left(-\frac{(x_1-x_2)u}{\sqrt{t-s}}\right)
\exp\left(-\frac{u^2}{2}\right)\\
&\quad \leq \exp\left(-\frac{u^2}{2}-\frac{(x_1-x_2)u}{\sqrt{t-s}}\right).
\end{split}
\eeq
Hence
\beq\label{clm9}
\begin{split}
 & \int_0^t\int_{\R}\frac{|u|^{2(1-\beta)}}{(t-s)^{3/4+\beta/2}}u^4\exp\left(-\frac{(x_1-x_2+\sqrt{t-s}u)^2}{2(t-s)}\right)\,duds\\
&\quad \leq \int_0^t\int_{\R}\frac{|u|^{2(1-\beta)}}{(t-s)^{3/4+\beta/2}}u^4 
 \exp\left(-\frac{u^2}{2}-\frac{(x_1-x_2)u}{\sqrt{t-s}}\right)\,duds\leq  Ct^{\frac14-\frac \beta 2},
\end{split}
\eeq
and 
\beq\label{clm10}
\begin{split}
&\int_0^t\int_{\R}\frac{|u|^{2(1-\beta)}}{(t-s)^{3/4+\beta/2}}\frac{(x_1-x_2)^4}{(t-s)^2}\exp\left(-\frac{(x_1-x_2+\sqrt{t-s}u)^2}{2(t-s)}\right)\,duds\\
=&\int_0^t\int_{\R}\frac{|u|^{2(1-\beta)}}{(t-s)^{3/4+\beta/2}}\frac{(x_1-x_2)^4}{(t-s)^2}\exp\left(-\frac{(x_1-x_2)^2}{2(t-s)}\right)
\exp\left(-\frac{u^2}{2}-\frac{(x_1-x_2)u}{\sqrt{t-s}}\right)\,duds\\
 \leq &\int_0^t\int_{\R}\frac{|u|^{2(1-\beta)}}{(t-s)^{3/4+\beta/2}}u^4 
 \exp\left(-\frac{u^2}{2}-\frac{(x_1-x_2)u}{\sqrt{t-s}}\right)\,duds\leq  Ct^{\frac14-\frac \beta 2}.
\end{split}
\eeq
Putting \eqref{clm9} and \eqref{clm10} into \eqref{clm8}, we obtain
\beq\label{clm11}
\begin{split}
&\int_0^t\int_{\R}\frac{|u|^{2(1-d)}}{(t-s)^{3/4+\beta/2}}\frac{(x_1-x_2+\sqrt{t-s}u)^4}{(t-s)^2}\exp\left(-\frac{(x_1-x_2+\sqrt{t-s}u)^2}{2(t-s)}\right)\,duds\\
&\quad \leq  Ct^{\frac14-\frac \beta 2}.
\end{split}
\eeq
Combining \eqref{clm11} with \eqref{clm6} and \eqref{clm7}, we have
\beq\label{clm12}
\begin{split}
&\int_0^t\int_{\R}\frac{1}{(t-s)^{3/2}}\frac{(x_1-y)^2}{t-s}(x_2-x_1)^{1-\beta}\exp\left(-\frac{(x_1-y)^2}{4(t-s)}\right) |f(y,s)|\,dyds\\
&\quad \leq Ct^{\frac{1}{4}-\frac \beta2}\|f\|_{L^{\infty}((0,T), L^2(\R))}.
\end{split}
\eeq
It is easy to see that \eqref{clm4} and \eqref{clm12} imply \eqref{hbdx}.

 Similarly, we can show the H\"older continuity of for $L_{iii}$ in $t$ , and
\beq\label{hbdx}
\left|\int_0^t\int_{\R} \frac{H_y(x_2-y,t-s)-H_y(x_1-y,t-s)}{(t_2-t_1)^\beta}f(y,s) dy ds\right|\leq C t^{\frac{1}{4}-\beta}.
\eeq

The proof of H\"older continuity for other terms are similar, and all bounds include a factor $t^{\frac{1}{4}-\beta}$.
\end{proof}

 \bigskip

\noindent
{\bf Acknowledgement:}  G. Chen's research is partially supported by NSF grant DMS-1715012, T. Huang's research is partially supported by  NSFC grant 11601333 and Shanghai NSF grant 16ZR1423800, and W. Liu's research is partially supported by Simons Foundation 
Mathematics and Physical Sciences-Collaboration Grants for Mathematicians \#581822.


\bigskip




\begin{thebibliography}{88}

\bibitem{BC2015} A. Bressan and G. Chen, 
Lipschitz metric for a class of nonlinear wave equations.
{\em Arch. Ration. Mech. Anal.} {\bf 226} (2017), no. 3, 1303-1343.

\bibitem{BC} A. Bressan and G. Chen,  
Generic regularity of conservative solutions to a nonlinear wave equation.  
{\em Ann. I. H. Poincar\'{e}--AN} {\bf  34} (2017),  no. 2, 335-354.

\bibitem{BCZ} A. Bressan, G. Chen, and Q. Zhang,
Unique conservative solutions to a variational wave equation. 
{\em Arch. Ration. Mech. Anal.} {\bf 217} (2015), no. 3, 1069-1101.

\bibitem{BH} A.~Bressan and T.~Huang
Representation of dissipative solutions to a nonlinear variational wave equation. 
{\em Comm. Math. Sci.} {\bf 14} (2016), 31-53.

\bibitem{BHY}
A.~Bressan, T.~Huang, and F.~Yu,
Structurally stable singularities for a nonlinear wave equation, 
{\em Bull. Inst. Math.,  Acad. Sin. (N.S.)}
{\bf 10} (2015), no. 4, 449-478. 


\bibitem{BZ} A.  Bressan and Y. Zheng,  
Conservative solutions to a nonlinear variational wave equation.
{\em Comm. Math. Phys.} {\bf 266} (2006), 471-497.

\bibitem{HB} H. Brezis, {\em Functional Analysis, Sobolev Spaces and Partial Differential Equations}, Springer, New York, 2011.

\bibitem{CCD} H. Cai, G. Chen, and Y. Du, 
Uniqueness and regularity of conservative solution to a wave system modeling nematic liquid crystal.
{\em J. Math. Pures Appl.} {\bf 9} (2018), no 117, 185-220.

\bibitem{liucalderer00} M.~Calderer and C.~Liu.
  Liquid crystal flow: Dynamic and static configurations.
 {\em SIAM J.  Appl. Math.} {\bf 60} (2000), 1925-1949.

\bibitem{Cha} S. Chanderasekhar, {\em Liquid Crystals}, 2nd Ed. 
Cambridge University Press 1992.

\bibitem{CHL} G. Chen, T. Huang, and C. Liu, 
Finite time singularities for hyperbolic systems.
{\em SIAM J. Math. Anal.} {\bf 47} (2015), no. 1, 758-785.

\bibitem{CZZ12} G. Chen, P. Zhang, and Y. Zheng, 
Energy Conservative Solutions to a Nonlinear Wave
System of Nematic Liquid Crystals. 
{\em Comm. Pure Appl. Anal.} {\bf 12} (2013), no 3, 1445-1468.

\bibitem{CZ12} G. Chen and Y. Zheng, 
Singularity and existence to a wave system of nematic liquid crystals. 
{\em J. Math. Anal. Appl.} {\bf 398} (2013), 170-188.

\bibitem{DeGP} P. G. De Gennes and J. Prost, 
{\em The Physics of Liquid Crystals}, 2nd Ed.
International Series of Monographs on Physics, {\bf 83}, 
Oxford Science Publications, 1995.


\bibitem{Eri76} J. L. Ericksen, Equilibrium Theory of Liquid Crystals.
{\em Advances in Liquid Crystals} (G. H. Brown, ed.), Vol. 2, 233-298.
 Academic Press, New York, 1976. 

\bibitem{ericksen62} J. L. Ericksen,  Hydrostatic theory of liquid crystals.
{\em Arch. Ration. Mech. Anal.} {\bf 9} (1962), 371-378.

\bibitem{frank58} F. C. Frank, 
I. Liquid Crystals. On the theory of liquid crystals. 
{\em Discussions of the Faraday Society} {\bf 25} (1958), 19-28.


\bibitem{GHZ} R.~T.~Glassey, J.~K.~Hunter, and Y.~Zheng, 
Singularities in a nonlinear variational wave equation.
{\em J. Differential Equations} {\bf 129} (1996), 49-78.

\bibitem{HardtK87} R. Hardt and D. Kinderlehrer, 
Mathematical questions of liquid crystal theory. 
In Theory and applications of liquid crystals (Minneapolis, Minn., 1985).  
{\em IMA Vol. Math. Appl.} {\bf Vol 5}, 151-184, Springer, New York, 1987.


\bibitem{HR} H.~Holden and X.~Raynaud, 
Global semigroup of conservative solutions of the nonlinear variational wave equation.
{\em Arch. Ration. Mech. Anal.} {\bf 201} (2011),  871-964.

\bibitem{hongxin12} M.~C. Hong and Z.~P. Xin,
\newblock Global existence of solutions of the liquid crystal flow for the
  {O}seen-{F}rank model in {$\mathbb R^2$}.
\newblock {\em Adv. Math.} {\bf 231} (1012),1364-1400.


\bibitem{huanglinwang14} J. R. Huang, F. H. Lin, and C. Y. Wang, 
Regularity and existence of global solutions to the Ericksen-Leslie system in $\mathbb R^2$. 
{\em Comm. Math. Phys.} {\bf 331} (2014), no. 2, 805-850.

\bibitem{HLLW16} T. Huang, F. H. Lin, C. Liu, and C. Y. Wang,
Finite time singularity of the nematic liquid crystal flow in dimension three. 
{\em Arch. Ration. Mech. Anal.} {\bf 221} (2016), 1223-1254.


\bibitem{jiangluo17} N. Jiang and Y. L Luo, 
On well-posedness of Ericksen-Leslie’s hyperbolic incompressible liquid crystal model. {\em SIAM J. Math. Anal.} {\bf 51} (2019), no. 1, 403-434.



\bibitem{llwwz19}
C.~C. Lai, F.~H. Lin, C.~Y. Wang, J. C. Wei, and Y.~F. Zhou.
\newblock Finite time blow-up for the nematic liquid crystal flow in dimension
  two.
\newblock {\em arXiv:1908.10955}, 2019.

\bibitem{leslie68} F. M. Leslie,  
Some thermal effects in cholesteric liquid crystals. 
{\em Proc. Roy. Soc. A.} {\bf 307} (1968), 359-372.

  \bibitem{Les} F. M. Leslie, 
  Theory of Flow Phenomena in Liquid Crystals.
{\em Advances in Liquid Crystals}, Vol. 4, 1-81.
 Academic Press, New York, 1979. 
 
 \bibitem{LTX16} J. K. Li, E. Titi, and Z. P. Xin, 
 On the uniqueness of weak solutions to the Ericksen-Leslie liquid
crystal model in $\mathbb R^2$. 
{\em Math. Models Methods Appl. Sci.} {\bf  26} (2016), no. 4, 803-822.

\bibitem{Lidaqian} T.T. Li,
Global Classical Solutions   for Quasilinear Hyperbolic Systems,
{\em Research in Applied mathematics 32}, Wiley-Masson, 1994.


\bibitem{liyu} T.T. Li and W. Yu, Boundary Value Problems for Quasilinear Hyperbolic Systems,
{\em Duke University Mathematics Series}, Durham, 1985.
 
\bibitem{lin89} F. H. Lin, 
Nonlinear theory of defects in nematic liquid crystals;  phase transition and phenomena. 
{\em Comm.  Pure  Appl. Math.} {\bf 42} (1989), 789-814.

\bibitem{linlinwang10} F. H. Lin, J. Y. Lin, and C. Y. Wang, 
Liquid crystal flows in two dimensions. 
{\em Arch. Ration. Mech. Anal.} {\bf 197} (2010), 297-336.
 
 \bibitem{linlius01} F. H. Lin and C. Liu, 
 Static and dynamic theories of liquid crystals. 
 {\em J. Partial Differential Equations} {\bf 14} (2001), 289-330.

\bibitem{linwang16} F. H. Lin and C. Y. Wang, 
Global existence of weak solutions of the nematic liquid crystal
flow in dimension three. 
{\em Comm. Pure   Appl. Math.}  {\bf 69} (2016), 1532-1571.
 

\bibitem{linwang10} F. H. Lin and C. Y. Wang, 
On the uniqueness of heat flow of harmonic maps and hydrodynamic
flow of nematic liquid crystals. 
{\em Chin. Ann. Math., Ser. B} {\bf 31} (2010), 921-938.

\bibitem{linwangs14} F. H. Lin and C. Y. Wang,
 Recent developments of analysis for hydrodynamic flow of nematic liquid crystals. 
{\em Philos. Trans. R. Soc. Lond. Ser. A Math. Phys. Eng. Sci.} {\bf 372} (2014), 
no. 2029, 20130361, 18 pp.

\bibitem{OH} H. H., Olsen, H. Holden, The Kolmogorov-Riesz compactness theorem
{\em Expositiones Mathematicae} {\bf 28} (2010) 385-394.

\bibitem{oseen33} C. W. Oseen, The theory of liquid crystals. 
{\em Trans.  Faraday Soc.} {\bf 29} (1933), no. 140, 883-899.

\bibitem{Parodi70} O. Parodi, 
Stress tensor for a nematic liquid crystal. 
{\em J. Phys.} {\bf  31} (1970), 581–584.


\bibitem{wangwang14} M. Wang and W. D. Wang, 
Global existence of weak solution for the 2-D Ericksen-Leslie system. 
{\em Calc. Var. Partial Differential Equations} {\bf 51} (2014), no. 3-4, 915-962.

\bibitem{WZZ13} W. Wang, P. W. Zhang, and Z. F. Zhang, 
Well-posedness of the Ericksen-Leslie system. 
{\em Arch. Ration. Mech. Anal.} {\bf 210} (2013), no. 3, 837-855.

 

\bibitem{ZZ03} P. Zhang and Y. Zheng,  
Weak solutions to a nonlinear variational wave equation. 
{\em Arch. Ration. Mech. Anal.} {\bf 166} (2003), 303--319.

\bibitem{ZZ10} P. Zhang and Y. Zheng, Conservative solutions to 
a  system of variational wave equations of nematic liquid crystals. 
{\em Arch. Ration. Mech. Anal.} {\bf 195} (2010), 701-727.

\bibitem{ZZ11} P. Zhang and Y. Zheng, 
Energy conservative solutions to a one-dimensional full variational wave system. 
{\em Comm. Pure Appl. Math.} {\bf 55} (2012), 582-632.

\end{thebibliography}
\end{document}